\newtheorem{thm}{Theorem}[section]
\newtheorem{lem}[thm]{Lemma}
\newtheorem{cor}[thm]{Corollary}
\newtheorem{prop}[thm]{Proposition}
\newtheorem{definition}[thm]{Definition}
\def \para{\refstepcounter{thm} \par\medskip\noindent
                \textbf{\thethm .} }
\def \remark{\refstepcounter{thm} \par\medskip\noindent
                \textbf{Remark \thethm .} }
\numberwithin{equation}{thm}
\newcommand\BB{\mathbb B}
\newcommand\CC{\mathbb C}
\newcommand\ZZ{\mathbb Z}
\newcommand\bQ{\mathbf Q}
\newcommand\bb{\mathbf b}
\newcommand\bh{\mathbf h}
\renewcommand\bm{\mathbf m} 
\newcommand\bn{\mathbf n}
\newcommand\bu{\mathbf u}
\newcommand\cE{\mathcal{E}}
\newcommand\cI{\mathcal{I}}
\newcommand\cJ{\mathcal{J}}
\newcommand\cL{\mathcal{L}}
\newcommand\cM{\mathcal{M}}
\newcommand\cN{\mathcal{N}}
\newcommand\cX{\mathcal{X}}
\newcommand\fI{\mathfrak I}
\newcommand\fg{\mathfrak g}
\newcommand\fh{\mathfrak h}
\newcommand\fn{\mathfrak n}
\renewcommand\a{\alpha}  
\renewcommand\b{\beta}   
\newcommand\g{\gamma}  
\renewcommand\d{\delta}  
\renewcommand\t{\theta}  
\newcommand\la{\lambda}
\renewcommand\xi{\xi}
\renewcommand\pi{\pi}
\newcommand\w{\omega}
\newcommand\ve{\varepsilon}
\newcommand\vf{\varphi}
\renewcommand\Xi{\Xi}
\renewcommand\Pi{\Pi}
\newcommand\vG{\varGamma}
\newcommand\Bb{\boldsymbol\beta}  
\newcommand\Bg{\boldsymbol\gamma}
\newcommand\Bvf{\boldsymbol\varphi}
\newcommand{\dis}{\displaystyle}
\newcommand\wh{\widehat}
\newcommand\wt{\widetilde}
\newcommand\ra{\rightarrow}
\newcommand\LRa{\Leftrightarrow}
\newcommand\lan{\langle}
\newcommand\ran{\rangle}
\newcommand\rad{\operatorname{rad}}
\newcommand\Fgl{\mathfrak{gl}}
\newcommand\Fsl{\mathfrak{sl}}
\newcommand\ev{\mathbf{{ev}}}
\newcommand{\isom}{\,\raise2pt\hbox{$\underrightarrow{\sim}$}\,}
\newcounter{ichi}
\newcommand{\roi}{\roman{ichi}}
\newcounter{ni}
\newcommand{\roii}{\roman{ni}}
\newcounter{san}
\newcommand{\roiii}{\roman{san}}
\newcounter{yon}
\newcommand{\roiv}{\roman{yon}}
\newcounter{go}
\newcommand{\rov}{\roman{go}}
\newcounter{roku}
\newcommand{\rovi}{\roman{roku}}
\newcounter{nana}
\newcounter{hachi}
\newcounter{kyu}
\newcommand{\mo}{\operatorname{monic}}
\begin{document}

\setlength{\baselineskip}{4.9mm}
\setlength{\abovedisplayskip}{4.5mm}
\setlength{\belowdisplayskip}{4.5mm}


\renewcommand{\theenumi}{\roman{enumi}}
\renewcommand{\labelenumi}{(\theenumi)}
\renewcommand{\thefootnote}{\fnsymbol{footnote}}
\renewcommand{\thefootnote}{\fnsymbol{footnote}}
\parindent=20pt


\setcounter{section}{-1}




\address{Department of Mathematics, Faculty of Science, Shinshu University, 
		Asahi 3-1-1, Matsumoto 390-8621, Japan}
		
\email{wada@math.shinshu-u.ac.jp}



\medskip
\begin{center}
{\large \textbf{ Finite dimensional simple modules \\ of deformed current Lie algebras }}  
\\
\vspace{1cm}
Kentaro Wada 
\\[1em]
\end{center}


\title{} 
\maketitle 

\markboth{Kentaro Wada}{ Finite dimensional simple modules of deformed current Lie algebras }



\begin{abstract}
The deformed current Lie algebra was introduced in \cite{W} 
to study the representation theory of cyclotomic $q$-Schur algebras at $q=1$. 
In this paper, we classify finite dimensional simple modules of deformed current Lie algebras. 
\end{abstract}



\tableofcontents



\section{Introduction} 
\para 
The deformed current Lie algebra $\fg_{\wh{\bQ}}(\bm)$ was introduced in \cite{W} 
to study the representation theory of cyclotomic $q$-Schur algebras at $q=1$. 
In this paper, 
we introduce the deformed current Lie algebra 
$\Fsl_m^{\lan \bQ \ran}[x]$ and $\Fgl_m^{\lan \bQ \ran}[x]$ 
over $\CC$ 
associated with the special linear Lie algebra $\Fsl_m$ and general linear Lie algebra $\Fgl_m$ respectively. 
$\Fsl_m^{\lan \bQ \ran}[x]$ (resp. $\Fgl_m^{\lan \bQ \ran}[x]$) 
is a deformation of the current Lie algebra $\Fsl_m[x] = \Fsl_m \otimes_{\CC} \CC[x]$ 
(resp. $\Fgl_m[x] = \Fgl_m \otimes_{\CC} \CC [x]$) 
with deformation parameters $\bQ=(Q_1, Q_2, \dots, Q_{m-1}) \in \CC^{m-1}$. 
Note that $\Fsl_m^{\lan \bQ \ran}[x]$ (resp. $\Fgl_m^{\lan \bQ \ran}[x]$) 
is coincide with 
$\Fsl_m[x]$ (resp. $\Fgl_m[x]$) 
if $Q_i =0$ for all $i=1,2,\dots, m-1$. 
The Lie algebra $\fg_{\wh{\bQ}}(\bm)$ introduced in \cite{W} 
is isomorphic to $\Fgl_m^{\lan \bQ \ran}[x]$ under a suitable choice of deformation parameters $\bQ$ 
(Lemma \ref{Lemma iso slmQx gQm}). 

\para 
The differences of the representation theory of $\Fsl_m^{\lan \bQ \ran}[x]$ 
from one of $\Fsl_m [x]$ 
appear in the following two points. 
The deformed current Lie algebra $\Fsl_m^{\lan \bQ \ran}[x]$ has a family of $1$-dimensional representations 
$\{ \cL^{\Bb} \mid \Bb \in \prod_{i=1}^{m-1} \BB^{\lan Q_i \ran}\}$, 
where 
\begin{align*}
\BB^{\lan Q_i \ran} = 
\begin{cases} 
\{ 0 \} & \text{ if } Q_i =0, 
\\
\CC & \text{ if }  Q_i \not=0, 
\end{cases}
\end{align*}
although the $1$-dimensional representation of $\Fsl_m[x]$ is only the trivial representation 
(Lemma \ref{Lemma 1 dim slmx}). 
(We remark that $\cL^{(0,\dots,0)}$ is the trivial representation of $\Fsl_m^{\lan \bQ \ran}[x]$.) 

The second difference appears in the evaluation modules.  
For each $\g \in \CC$, 
we can consider the evaluation homomorphism 
$\ev_{\g} : U(\Fsl_m^{\lan \bQ \ran}[x]) \ra U(\Fsl_m)$ 
which is a deformation of the evaluation homomorphism for $\Fsl_m[x]$ 
(see the paragraph \ref{para evaluation sl2} for the definition). 
Then we can consider the evaluation modules 
by regarding $U(\Fsl_m)$-modules as $U(\Fsl_m^{\lan \bQ \ran}[x])$-modules 
through the evaluation homomorphism $\ev_{\g}$. 
The evaluation homomorphism $\ev_{\g}$ is surjective if $\g \not= Q_i^{-1}$ for all $i=1, 2,\dots, m-1$ such that $Q_i\not=0$. 
However, $\ev_{\g}$ is not surjective if $\g =Q_i^{-1}$ for some $i=1,2,\dots, m-1$. 
Moreover, in general, 
the evaluation module of a simple $U(\Fsl_m)$-module at $\g \in \CC$ 
is not simple if $\g =Q_i^{-1}$ for some $i=1,2,\dots, m-1$ (see Remark \ref{Remark ev not simple}). 

\para 
It is a purpose of this paper to classify the finite dimensional simple modules of 
$\Fsl_m^{\lan \bQ \ran}[x]$ and $\Fgl_m^{\lan \bQ \ran}[x]$. 
A classification of the finite dimensional simple modules for the original current Lie algebra is well-known 
(e.g. \cite{C}, \cite{CP}). 
The classification for $\Fsl_m^{\lan \bQ \ran}[x]$ (resp. $\Fgl_m^{\lan \bQ \ran}[x]$) 
is an analogue of the original case. 

Since $\Fsl_m^{\lan \bQ \ran}[x]$ has the triangular decomposition (Proposition \ref{Prop basis slmQ[x] and glmQ[x]}), 
we can develop the usual highest weight theory (see \S \ref{section Rep slmx}). 
In particular, 
any finite dimensional simple $U(\Fsl_m^{\lan \bQ \ran}[x])$-module 
is isomorphic to a highest weight module $\cL(\bu)$ of highest weight $\bu \in \prod_{i=1}^{m-1} \prod_{t \geq 0} \CC$ 
(Proposition \ref{Prop simple slmx HW}). 
Then it is enough to determine the highest weights such that 
the corresponding simple highest weight modules are finite dimensional. 
We obtain a classification of such highest weights as follows. 
Let $\CC[x]_{\mo}$ be the set of monic polynomials over $\CC$ with the indeterminate variable $x$. 
For each $Q \in \CC$, 
put 
\begin{align*}
\CC[x]_{\mo}^{\lan Q \ran} 
= \begin{cases} 
	\CC[x]_{\mo} & \text{ if } Q=0, 
	\\ 
	\{ \vf \in \CC[x]_{\mo} \mid Q^{-1} \text{ is not a root of } \vf \} & \text{ if } Q\not=0. 
	\end{cases} 
\end{align*}
We define the map 
$\prod_{i=1}^{m-1} (\CC[x]_{\mo}^{\lan Q_i \ran} \times \BB^{\lan Q_i \ran}) \ra \prod_{i=1}^{m-1} \prod_{t \geq 0} \CC$, 
\begin{align*}
(\Bvf, \Bb)=((\vf_i, \b_i))_{1 \leq i \leq m-1} \mapsto \bu^{\lan \bQ \ran} (\Bvf, \Bb) = (\bu^{\lan \bQ \ran} (\Bvf, \Bb)_{i,t})_{1\leq i \leq m-1, t \geq 0}, 
\end{align*}
by 
\begin{align}
\label{Intro uQ}
\bu^{\lan \bQ \ran} (\Bvf, \Bb)_{i,t} = 
	\begin{cases} 
		\g_{i,1}^t + \g_{i,2}^t + \dots + \g_{i,n_i}^t & \text{ if } Q_i=0, 
		\\
		\g_{i,1}^t + \g_{i,2}^t + \dots + \g_{i,n_i}^t  + Q_i^{-t} \b_i & \text{ if } Q_i \not=0 
	\end{cases}
\end{align}
when $\vf_i =(x- \g_{i,1})(x- \g_{i,2}) \dots (x - \g_{i,n_i})$ ($1\leq i \leq m-1$). 
Then we have the following classification of finite dimensional simple $U(\Fsl_m^{\lan \bQ \ran}[x])$-modules 
(Theorem \ref{Thm simple slmQ}).  
\begin{description}
\item[Theorem] 
$\{ \cL(\bu^{\lan \bQ \ran}(\Bvf, \Bb)) 
	\mid (\Bvf, \Bb) \in \prod_{i=1}^{m-1} (\CC[x]_{\mo}^{\lan Q_i \ran} \times \BB^{\lan Q_i \ran}) \}$ 
gives a complete set of isomorphism classes of finite dimensional simple $U(\Fsl_m^{\lan \bQ \ran}[x])$-modules. 
\end{description} 
We remark that $\cL(\bu^{\lan \bQ \ran}(\Bvf, \Bb))$ is isomorphic to a subquotient of 
\begin{align*} 
\big( \bigotimes_{j=1}^{m-1} \bigotimes_{k=1}^{n_j} L(\w_j)^{\ev_{\g_j,k}} \big) \otimes \cL^{\Bb}, 
\end{align*}
where $\{\w_j \mid 1\leq j \leq m-1\}$ is the set of fundamental weights for $\Fsl_m$, 
$L(\w_j)$ ($1 \leq j \leq m-1$) is the simple highest weight $U(\Fsl_m)$-module of highest weight $\w_j$ 
and $L(\w_j)^{\ev_{\g_j,k}}$ is the evaluation module of $L(\w_j)$ at $\g_{j,k}$. 

We also see that any finite dimensional simple $U(\Fgl_m^{\lan \bQ \ran}[x])$-module is 
isomorphic to a highest weight module $\cL (\wt{\bu})$ of highest weight $\wt{\bu} \in \prod_{j=1}^m \prod_{t \geq 0} \CC$ 
(Proposition \ref{Prop h.w. simple glmQ}). 
Note that $\Fsl_m^{\lan \bQ \ran}[x]$ is a Lie subalgebra of $\Fgl_m^{\lan \bQ \ran}[x]$ 
(Proposition \ref{Prop basis slmQ[x] and glmQ[x]} (\roiii)). 
The difference of representations of $\Fgl_m^{\lan \bQ \ran}[x]$ from one of $\Fsl_m^{\lan \bQ \ran}[x]$ 
is given by the family of $1$-dimensional $U(\Fgl_m^{\lan \bQ \ran}[x])$-modules 
$\{ \wt{\cL}^{\bh} \mid \bh \in \prod_{t \geq 0} \CC\}$. 
We remark that 
$\wt{\cL}^{\bh}$ ($\bh \in \prod_{t \geq 0} \CC$) 
is isomorphic to the trivial representation 
$\cL^{(0,\dots,0)}$ as a $U(\Fsl_m^{\lan \bQ \ran}[x])$-module 
when we restrict the action. 
We obtain the classification of finite dimensional simple $U(\Fgl_m^{\lan \bQ \ran}[x])$-modules as follows. 
We define the map 
$\prod_{i=1}^{m-1} ( \CC[x]_{\mo}^{\lan Q_i \ran} \times \BB^{\lan Q_i \ran}) \times \prod_{t \geq 0}\CC 
\ra \prod_{j=1}^m \prod_{t \geq 0} \CC$, 
\begin{align*}
(\Bvf, \Bb, \bh) =((\vf_i, \b_i)_{1\leq i \leq m-1}, (h_t)_{t \geq 0}) 
\mapsto \wt{\bu}^{\lan \bQ \ran} (\Bvf, \Bb, \bh)= (\wt{\bu}^{\lan \bQ \ran} (\Bvf, \Bb, \bh)_{j,t})_{1\leq j \leq m, t \geq 0}
\end{align*}
by 
\begin{align*}
\wt{\bu}^{\lan \bQ \ran}(\Bvf, \Bb, \bh)_{j,t} 
= \begin{cases} 
	\dis \sum_{k=j}^{m-1} \bu^{\lan \bQ \ran} (\Bvf, \Bb)_{k,t} + h_t & \text{ if } 1 \leq j \leq m-1 \text{ and } t \geq 0, 
	\\
	h_t & \text{ if } j=m \text{ and } t \geq 0, 
	\end{cases}
\end{align*}
where $\bu^{\lan \bQ \ran} (\Bvf, \Bb)_{k,t}$ is determined by \eqref{Intro uQ}. 
Then we have the following classification of finite dimensional simple $U(\Fgl_m^{\lan \bQ \ran}[x])$-modules 
(Theorem \ref{Thm class simple glmQ}). 
\begin{description}
\item[Theorem] 
$\{\cL(\wt{\bu}^{\lan \bQ \ran}(\Bvf, \Bb, \bh))
\mid (\Bvf, \Bb, \bh) \in \prod_{i=1}^{m-1} 
(\CC[x]_{\mo}^{\lan Q_i \ran} \times \BB^{\lan Q_i \ran}) \times \prod_{t \geq 0} \CC \}$ 
gives a complete set of isomorphism classes of finite dimensional simple $U(\Fgl_m^{\lan \bQ \ran}[x])$-modules. 
\end{description}
We remark that 
$\cL(\wt{\bu}^{\lan \bQ \ran}(\Bvf, \Bb, \bh))$ is isomorphic to a subquotient of 
\begin{align*}
\big( \bigotimes_{j=1}^{m-1} \bigotimes_{k=1}^{n_j} L(\wt{\w}_j)^{\wt{\ev}_{\g_j,k}} \big) 
\otimes \wt{\cL}^{\Bb} \otimes \wt{\cL}^{\bh}. 
\end{align*} 
(See \S \ref{Section class simple glmQ} for definitions of 
$L(\wt{\w}_j)^{\wt{\ev}_{\g_j,k}}$, $\wt{\cL}^{\Bb}$ and $\wt{\cL}^{\bh}$.) 
We also remark that 
\begin{align*}
&\cL(\wt{\bu}^{\lan \bQ \ran} (\Bvf, \Bb, \bh)) \cong \cL (\bu^{\lan \bQ \ran} (\Bvf, \Bb)), 
\\ 
&
L(\wt{\w}_j)^{\wt{\ev}_{\g_j,k}} \cong L(\w_j)^{\ev_{\g_j,k}}, 
\quad 
\wt{\cL}^{\Bb} \cong \cL^{\Bb} 
\text{ and } 
\wt{\cL}^{\bh} \cong \cL^{(0,\dots,0)} 
\end{align*}
as $U(\Fsl_m^{\lan\bQ \ran}[x])$-modules when we restrict the action. 
\\

{\bf Acknowledgements:} 
This work was supported by JSPS KAKENHI Grant Number JP16K17565. 



\section{Deformed current Lie algebras $\Fsl_m^{\lan \bQ \ran} [x]$ and $\Fgl_m^{\lan \bQ \ran} [x]$} 
In this section, 
we give a definition of deformed current Lie algebras $\Fsl_m^{\lan \bQ \ran}[x]$ and $\Fgl_m^{\lan \bQ \ran}[x]$, 
and also give some basic facts. 
The definition of $\Fgl_m^{\lan \bQ \ran}[x]$ in this section 
is different from one of $\fg_{\wh{\bQ}}(\bm)$ given in \cite{W}. 
The relation between $\Fgl_m^{\lan \bQ \ran}[x]$ and $\fg_{\wh{\bQ}}(\bm)$ is given in Lemma \ref{Lemma iso slmQx gQm}. 

\begin{definition} 
Put $\bQ =(Q_1,Q_2, \dots, Q_{m-1}) \in \CC^{m-1}$. 
We define the Lie algebra $\Fsl_m^{\lan \bQ \ran}[x]$ over $\CC$ by the following generators and 
defining relations: 
\begin{description}
\item[Generators] 
	$\cX_{i,t}^{\pm}$, $\cJ_{i,t}$ ($1 \leq i \leq m-1$,  $t \geq 0$). 
\item[Relations] 
\begin{align*}
&\tag{L1}
	[\cJ_{i,s}, \cJ_{j,t}]=0, 
\\
&\tag{L2} 
	[\cJ_{j,s}, \cX_{i,t}^{\pm}] = \pm a_{ji} \cX_{i, s+t}^{\pm}, 
\\
& \tag{L3} 
	[\cX_{i,t}^+, \cX_{j,s}^-] = \d_{ij} ( \cJ_{i,s+t} - Q_i \cJ_{i,s+t+1} ), 
\\
& \tag{L4} 
	[\cX_{i,t}^{\pm}, \cX_{j,s}^{\pm}] =0  \quad \text{ if } j \not= i \pm 1, 
\\ 
& \tag{L5}
	[\cX_{i,t+1}^+, \cX_{i \pm 1,s }^+] = [\cX_{i,t}^+, \cX_{i \pm 1, s+1}^+], 
	\quad 
	 [\cX_{i,t+1}^-, \cX_{i \pm1 ,s}^-] = [ \cX_{i,t}^-, \cX_{i \pm 1, s+1}^-], 
\\
& \tag{L6} 
	[\cX_{i,s}^+, [\cX_{i,t}^+, \cX_{i \pm 1,u}^+]] = [ \cX_{i,s}^-, [\cX_{i,t}^-, \cX_{i \pm 1,u}^-]] =0, 
\end{align*}
where we put 
$a_{ji} = 
	\begin{cases} 
		2 & \text{ if } j=i, 
		\\
		-1 & \text{ if } j = i \pm 1, 
		\\
		0 & \text{ otherwise}.
	\end{cases}
$
\end{description}

We also define the Lie algebra 
$\Fgl_m^{\lan \bQ \ran} [x]$ over $\CC$ by the following generators and 
defining relations: 
\begin{description}
\item[Generators] 
	$\cX_{i,t}^{\pm}$ ($1 \leq i \leq m-1$,  $t \geq 0$), 
	$\cI_{j,t}$ ($1 \leq j \leq m$, $t \geq 0$).
\item[Relations] 
\begin{align*}
&\tag{L'1}
	[\cI_{i,s}, \cI_{j,t}]=0, 
\\
&\tag{L'2} 
	[\cI_{j,s}, \cX_{i,t}^{\pm}] = \pm a'_{ji} \cX_{i, s+t}^{\pm}, 
\\
& \tag{L'3} 
	[\cX_{i,t}^+, \cX_{j,s}^-] = \d_{ij} ( \cJ_{i,s+t} - Q_i \cJ_{i,s+t+1} ), 
	\text{where we put } \cJ_{i,t} = \cI_{i,t} - \cI_{i+1,t}, 
\end{align*}
together with the relations (L4)-(L6)  in the above. 
In the relation (L'2), 
we put 
$a'_{ji} = 
	\begin{cases} 
		1 & \text{ if } j=i, 
		\\
		-1 & \text{ if } j = i + 1, 
		\\
		0 & \text{ otherwise}.
	\end{cases}
$
\end{description}
\end{definition}


\para 
We call $\Fsl_m^{\lan \bQ \ran}[x]$ (resp. $\Fgl_m^{\lan \bQ \ran} [x]$) 
the deformed current Lie algebra associated with the special linear Lie algebra $\Fsl_m$ 
(resp. the general linear Lie algebra $\Fgl_m$). 
If $Q_i=0$ for all $i=1,2,\dots,m-1$, 
then $\Fsl_m^{\lan \bQ \ran}[x]$  (resp. $\Fgl_m^{\lan \bQ \ran}[x]$) 
coincides with the current Lie algebra $\Fsl_m[x] = \Fsl_m \otimes_{\CC} \CC[x]$ 
(resp. $\Fgl_m[x] = \Fgl_m \otimes_{\CC} \CC[x]$)
associated with $\Fsl_m$ (resp. $\Fgl_m$).  
We can also regard $\Fsl_m^{\lan \bQ \ran}[x]$ (resp. $\Fgl_m^{\lan \bQ \ran}[x]$) 
as a filtered deformation of $\Fsl_m [x]$ (resp. $\Fgl_m[x]$) 
in a similar way as in \cite[Proposition 2.13]{W}. 


\para 
For $1 \leq i \not=j \leq m$ and $t \geq 0$, we define an element 
$\cE_{i,j;t} \in \Fsl_m^{\lan \bQ \ran} [x]$ (resp. $\cE_{i,j;t} \in \Fgl_n^{\lan \bQ \ran}[x]$) by 
\begin{align*}
\cE_{i,j;t} = 
	\begin{cases} 
		[\cX_{i,0}^+, [\cX_{i+1,0}^+, \dots, [\cX_{j-2,0}^+, \cX_{j-1,t}^+] \dots ]] & \text{ if } j >i, 
		\\
		[\cX_{i-1,0}^-, [\cX_{i-2,0}^-, \dots, [\cX_{j+1,0}^-, \cX_{j,t}^-] \dots ]] & \text{ if } j < i. 
	\end{cases}
\end{align*}
In particular, we have $\cE_{i,i+1;t} = \cX_{i,t}^+$ and $ \cE_{i+1,i;t} = \cX_{i,t}^-$. 

Let $\bn^+$ and $\bn^-$ be the Lie subalgebra of $\Fsl_m^{\lan \bQ \ran}[x]$ (also of $\Fgl_m^{\lan \bQ \ran}[x]$) 
generated by 
\begin{align*}
\{ \cX_{i,t}^+ \,|\, 1 \leq i \leq m-1, \, t \geq 0 \} 
\text{ and }
\{ \cX_{i,t}^- \,|\, 1 \leq i \leq m-1, \, t \geq 0 \} 
\end{align*}
respectively. 
Let $\bn^0$ (resp. $\wt{\bn}^0$) be the Lie subalgebra of $\Fsl_m^{\lan \bQ \ran} [x]$ (resp. $\Fgl_m^{\lan \bQ \ran}[x]$) 
generated by 
\begin{align*} 
\{ \cJ_{i,t} \,|\, 1 \leq i \leq m-1, \, t \geq 0\} 
\quad 
(\text{resp. }  \{ \cI_{i,t} \,|\, 1 \leq i \leq m, \, t \geq 0\}). 
\end{align*}
By the relation (L1) (resp. (L'1)), 
we see that 
$\bn^0$ (resp. $\wt{\bn}^0$) is a commutative Lie subalgebra of $\Fsl_m^{\lan \bQ} [x]$ (resp $\Fgl_m^{\lan \bQ \ran}[x]$). 


\begin{prop}\ 
\label{Prop basis slmQ[x] and glmQ[x]}
\begin{enumerate} 
\item 
	$\{\cE_{i,j;t} \,|\, 1 \leq i \not=j \leq m, \, t \geq 0\} \cup \{\cJ_{i,t} \,|\, 1 \leq i \leq m-1, \, t \geq 0\}$ 
	gives a basis of $ \Fsl_m^{\lan \bQ \ran}[x]$. 

\item 
	$\{\cE_{i,j;t} \,|\, 1 \leq i \not=j \leq m, \, t \geq 0\} \cup \{\cI_{j,t} \,|\, 1 \leq j \leq m, \, t \geq 0\}$ 
	gives a basis of $ \Fgl_m^{\lan \bQ \ran}[x]$. 

\item 
	There exists an injective homomorphism of Lie algebras 
	\begin{align*}
		\Upsilon : \Fsl_m^{\lan \bQ \ran} [x] \ra \Fgl_m^{\lan \bQ \ran} [x] 
		\text{ such that } 
		\cX_{i,t}^{\pm} \mapsto \cX_{i,t}^{\pm}, 
		\text{ and }
		\cJ_{i,t} \mapsto \cI_{i,t} - \cI_{i+1,t}. 
	\end{align*}
\item 
	We have the triangular decomposition 
	\begin{align*}
	\Fsl_m^{\lan \bQ \ran}[x] = \bn^- \oplus \bn^0 \oplus \bn^+ 
	\text{ and } 
	\Fgl_m^{\lan \bQ \ran} [x] = \bn^- \oplus \wt{\bn}^0 \oplus \bn^- 
	\quad ( \text{as vector spaces}).
	\end{align*}
	In particular, 
	\begin{align*} 
	\{\cE_{i,j ; t} \,|\, 1 \leq i < j \leq m, \, t \geq 0 \}  
	\quad (\text{resp. } \{\cE_{i,j ; t} \,|\, 1 \leq j < i \leq m, \, t \geq 0 \}) 
	\end{align*} 
	gives a basis of $\bn^+$ (resp. $\bn^-$), 
	and 
	\begin{align*} 
	\{\cJ_{i,t} \,|\, 1 \leq i \leq m-1, \, t \geq 0\} 
	\quad  (\text{resp. } \{\cI_{j,t} \,|\, 1 \leq j \leq m, \, t \geq 0\})
	\end{align*}  
	gives a basis of $\bn^0$ (resp. $\wt{\bn}^0$). 
\end{enumerate}
\end{prop}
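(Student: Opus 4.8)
The plan is to establish (i) and (ii) by proving separately that the proposed sets span and are linearly independent, and then to deduce (iii) and (iv) as formal consequences. Throughout I write $E_{p,q}\in\Fgl_m$ for the matrix units and set $e_i=E_{i,i+1}$, $f_i=E_{i+1,i}$, $h_i=E_{i,i}-E_{i+1,i+1}$, so that $\{E_{p,q}\ot x^t\}$ is the usual basis of the current algebra $\Fgl_m[x]=\Fgl_m\ot_\CC\CC[x]$, and likewise $\{e_i,f_i,h_i\}$-type root vectors tensored with $x^t$ give a basis of $\Fsl_m[x]$.

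For spanning I would first show that $\bn^+$ is the $\CC$-linear span of $\{\cE_{i,j;t}\mid i<j\}$ and $\bn^-$ that of $\{\cE_{i,j;t}\mid i>j\}$. Since $\bn^\pm$ is generated by the $\cX_{i,t}^\pm=\cE_{i,i+1;t}$ (resp. $\cE_{i+1,i;t}$), it suffices to check that these spans are closed under bracketing with the generators. This is the standard reduction for the nilpotent part of a current algebra: relation (L4) makes non-adjacent generators commute, relation (L5) shows that an iterated bracket depends only on the total degree $t$ of its constituents (so $\cE_{i,j;t}$ is well defined), and relation (L6) kills the iterated brackets that would leave the set $\{\cE_{i,j;t}\}$; together with the Jacobi identity these reproduce exactly the structure constants of $\bigoplus_{i<j}\CC E_{i,j}\ot\CC[x]$. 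Relation (L1) (resp. (L'1)) shows $\bn^0$ (resp. $\wt\bn^0$) is abelian and spanned by the $\cJ_{i,t}$ (resp. $\cI_{j,t}$), relations (L2)/(L'2) give $[\bn^0,\bn^\pm]\subseteq\bn^\pm$, and relations (L3)/(L'3) give $[\bn^+,\bn^-]\subseteq\bn^0$ (resp. $\subseteq\wt\bn^0$). Hence $\bn^-+\bn^0+\bn^+$ (resp. with $\wt\bn^0$) is a subalgebra containing all generators, so it is the whole algebra, and the proposed sets span.

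The main point is linear independence, and the key idea is to certify it by a single explicit realization into the undeformed current algebra. For $\Fsl_m^{\lan\bQ\ran}[x]$ I would define $\Phi\colon\Fsl_m^{\lan\bQ\ran}[x]\to\Fsl_m[x]$ on generators by
\[
\Phi(\cX_{i,t}^+)=e_i\ot x^t,\qquad \Phi(\cX_{i,t}^-)=f_i\ot x^t(1-Q_ix),\qquad \Phi(\cJ_{i,t})=h_i\ot x^t,
\]
and check that the images satisfy (L1)--(L6); since the domain is presented by generators and relations, this is all that is needed for $\Phi$ to be a homomorphism. The only relation using the deformation is (L3), where $[e_i\ot x^t,\,f_i\ot x^s(1-Q_ix)]=h_i\ot x^{s+t}(1-Q_ix)=\Phi(\cJ_{i,s+t}-Q_i\cJ_{i,s+t+1})$ works precisely because of the factor $1-Q_ix$. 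This $\Phi$ need not be injective, but it is injective on the span of the proposed basis: under $\Phi$ each $\cE_{i,j;t}$ with $i<j$ maps to a nonzero multiple of $E_{i,j}\ot x^t$, each $\cE_{i,j;t}$ with $i>j$ to a nonzero multiple of $E_{i,j}\ot x^t\prod_{k=j}^{i-1}(1-Q_kx)$, and each $\cJ_{i,t}$ to $h_i\ot x^t$. These images lie in the positive root spaces, the negative root spaces, and the Cartan part of $\Fsl_m[x]$ respectively, and within each fixed root space (or the Cartan) they are visibly independent because the attached polynomials $\{x^t\}$, resp. $\{x^t\prod_k(1-Q_kx)\}$, are independent in the domain $\CC[x]$. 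As these pieces are complementary in $\Fsl_m[x]$, the whole family is linearly independent, which proves (i); the same recipe with $\Phi(\cI_{j,t})=E_{j,j}\ot x^t$ realizes $\Fgl_m^{\lan\bQ\ran}[x]$ inside $\Fgl_m[x]$ and proves (ii). The argument is uniform in $\bQ$ (when all $Q_i=0$ it reduces to the standard basis of $\Fsl_m[x]$). I expect this independence step, specifically finding a realization that survives for every $\bQ$, to be the crux; the spanning reduction, although it carries the bulk of the bookkeeping, is routine and parallels the classical case treated in \cite{C}, \cite{CP}. (Alternatively one could argue via the filtered-deformation picture, identifying the associated graded with $\Fsl_m[x]$ and comparing with its known basis, but $\Phi$ avoids setting up the filtration.)

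Finally, for (iii) I would define $\Upsilon$ on generators by $\cX_{i,t}^\pm\mapsto\cX_{i,t}^\pm$ and $\cJ_{i,t}\mapsto\cI_{i,t}-\cI_{i+1,t}$ and verify that the defining relations (L1)--(L6) hold for these images in $\Fgl_m^{\lan\bQ\ran}[x]$; the only nonformal checks are (L2), which reduces to the identity $a'_{ji}-a'_{j+1,i}=a_{ji}$, and (L3), which holds because $\cI_{i,t}-\cI_{i+1,t}$ is exactly the combination $\cJ_{i,t}$ appearing in (L'3). Injectivity is then immediate from (i) and (ii): $\Upsilon$ sends the basis of $\Fsl_m^{\lan\bQ\ran}[x]$ to $\{\cE_{i,j;t}\}\cup\{\cI_{i,t}-\cI_{i+1,t}\mid 1\le i\le m-1\}$, which is linearly independent in $\Fgl_m^{\lan\bQ\ran}[x]$ by (ii). For (iv), the decompositions $\Fsl_m^{\lan\bQ\ran}[x]=\bn^-\op\bn^0\op\bn^+$ and $\Fgl_m^{\lan\bQ\ran}[x]=\bn^-\op\wt\bn^0\op\bn^+$ are exactly the statement that the sum produced in the spanning step is direct, which is now guaranteed by the independence established in (i) and (ii); the indicated bases of $\bn^\pm$, $\bn^0$ and $\wt\bn^0$ are read off from the same partition of the global basis by root spaces.
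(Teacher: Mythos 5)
Your proposal is correct, and it is genuinely more self-contained than what the paper does: the paper disposes of (i) and (ii) with a one-line reference to the analogous statement in \cite{W} (Proposition 2.6 there, with the filtered-deformation picture of Proposition 2.13 lurking in the background), whereas you supply the two halves explicitly. Your key device --- the realization homomorphism $\Phi$ into the \emph{undeformed} current algebra sending $\cX_{i,t}^{-}\mapsto f_i\otimes x^t(1-Q_ix)$ --- is not in the paper, but it is exactly the ``universal'' version of the paper's evaluation maps $\ev_\gamma$ (which put the factor $1-Q_i\gamma$ on the other side), and it does certify linear independence for every $\bQ$ in one stroke, since the polynomials $x^t\prod_{k}(1-Q_kx)$ attached to a fixed negative root space remain independent. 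What your route buys is an argument readable without \cite{W}; what the paper's route buys is brevity and consistency with the filtration machinery it reuses elsewhere. One small imprecision worth fixing: in the spanning step the containment ``$[\bn^+,\bn^-]\subseteq\bn^0$'' is false for the full subalgebras (e.g.\ the bracket of $\cE_{1,3;t}$ with $\cE_{2,1;s}$ lands in $\bn^-$); relation (L3) only gives this for the generators $\cX^{\pm}_{i,t}$ themselves, and the correct conclusion, obtained via the Jacobi identity, is that $\bn^-+\bn^0+\bn^+$ is closed under bracketing --- which is all you actually use, so the argument survives. The derivations of (iii) and (iv) from (i) and (ii) coincide with the paper's.
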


\begin{proof} 
(\roi) and (\roii) are proven in a similar way as in the proof of \cite[Proposition 2.6]{W}. 
By checking the defining relations, 
we see that $\Upsilon$ is well-defined. 
We also see that $\Upsilon$ is injective by investigating the basis given in (\roi) and (\roii) 
under the homomorphism $\Upsilon$. 
Then we have (\roiii). 
(\roiv) folloes from (\roi) and (\roii). 
\end{proof}


\para \textbf{Evaluation homomorphisms and evaluation modules.} 
\label{para evaluation sl2}
The general linear Lie algebra $\Fgl_m$ is a Lie algebra over $\CC$ 
generated by $e_i, f_i$ ($1 \leq i \leq m-1$) and $K_j$ ($1 \leq  j \leq m$) 
together with the following defining relations: 
\begin{align*}
&[K_i, K_j]=0, 
\quad 
[K_j, e_i] =a'_{ji} e_i, 
\quad 
[K_j, f_i] = - a'_{ji} f_i, 
\\
&[e_i, f_j] = \d_{ij} H_i, \text{ where } H_i =K_i- K_{i+1}, 
\\
& [e_i,e_j] = [f_i,f_j] =0 \text{ if } j \not= i \pm 1, 
\quad 
[e_i, [e_i, e_{i \pm 1}]] = [f_i, [f_i, f_{i\pm 1}]] =0. 
\end{align*}
The special linear Lie algebra $\Fsl_m$ is a Lie subalgebra of $\Fgl_m$ generated by 
$e_i, f_i, H_i$ ($1\leq i \leq m-1$). 

For each $\g \in \CC$, by checking the defining relations, 
we have the homomorphisms of algebras 
(evaluation homomorphism) 
\begin{align*}
&\ev_\g :  U(\Fsl_m^{\lan \bQ \ran} [x]) \ra U(\Fsl_m) 
	\text{ by } 
	\cX_{i,t}^+ \mapsto (1 - Q_i \g ) \g^t e_i, \, 
	\cX_{i,t}^- \mapsto \g^t f_i, \, 
	\cJ_{i,t} \mapsto \g^t H_i 
\end{align*}
and 
\begin{align*} 
\wt{\ev}_{\g} : U(\Fgl_m^{\lan \bQ \ran}[x]) \ra U(\Fgl_m) 
	\text{ by } 
	\cX_{i,t}^+ \mapsto (1 - Q_i \g ) \g^t e_i, \, 
	\cX_{i,t}^- \mapsto \g^t f_i, \, 
	\cI_{j,t} \mapsto \g^t K_j. 
\end{align*} 
Clearly, 
the homomorphism $\ev_{\g}$ (resp. $\wt{\ev}_\g$) is surjective if $\g \not= Q_i^{-1}$ 
for all $i=1, \dots, m-1$ such that $Q_i \not=0$. 

For a $U(\Fsl_m)$-module $M$  (resp. a $U(\Fgl_m)$-module $M$), 
we can regard $M$ as a $U(\Fsl_m^{\lan \bQ \ran}[x])$-module (resp. a $U(\Fgl_m^{\lan \bQ \ran}[x])$-module) 
through the evaluation homomorphism $\ev_{\g}$ (resp. $\wt{\ev}_{\g}$). 
We call it the evaluation module, and denote it by $M^{\ev_\g}$ (resp. $M^{\wt{\ev}_\g}$).


\para 
In the rest of this section, 
we give a relation with the Lie algebra $\fg_{\wh{\bQ}}(\bm)$ introduced in \cite[Definition 2.2]{W}. 

Let $\bm =(m_1,\dots, m_r)$ be an $r$-tuple of positive integers such that 
$\sum_{k=1}^r m_k =m$. 
Put 
$\vG(\bm) = \{ (i,k) \,|\, 1 \leq i \leq m_k, \, 1 \leq k \leq r \}$ 
and $\vG'(\bm) = \vG(\bm) \setminus \{(m_r,r) \}$. 
Then we have the bijective map 
\begin{align*}
\zeta : \vG(\bm) \ra \{ 1 ,2, \dots, m\} 
\text{ such that } (i,k) \mapsto \sum_{j=1}^{k-1} m_j +i. 
\end{align*}
For $(i,k) \in \vG(\bm)$ and $j \in \ZZ$ such that $1 \leq \zeta((i,k)) + j \leq m$, 
put $(i+j, k) = \zeta^{-1} (\zeta((i,k))+j)$. 
For $(i,k) \in \vG'(\bm)$ and $(j,l) \in \vG(\bm)$, 
put $a'_{(j,l)(i,k)} = a'_{\zeta((j,l)) \zeta ((i,k))}$. 
Take $\wh{\bQ} = (\wh{Q}_1,\dots, \wh{Q}_{r-1}) \in \CC^{r-1}$. 
Then the Lie algebra 
$\fg_{\wh{\bQ}}(\bm)$ in \cite[Definition 2.2]{W} 
is defined by the generators 
$\cX_{(i,k),t}^{\pm}$, $\cI_{(j,l),t}$ ($(i,k) \in \vG'(\bm)$, $(j,l) \in \vG(\bm)$, $t \geq 0$) 
together with the following defining relations: 
\begin{align*}
& [\cI_{(i,k),s}, \cI_{(j,l),t}] =0, 
\quad 
	[\cI_{(j,l),s}, \cX_{(i,k),t}^{\pm}] = \pm a'_{(j,l)(i,k)} \cX_{(i,k),s+t}^{\pm}, 
\\
& [\cX_{(i,k),t}^+, \cX_{(j,l),s}^-]
 = \d_{(i,k)(j,l)} 
 	\begin{cases} 
		\cJ_{(i,k),s+t} & \text{ if } i \not= m_k, 
		\\
		- \wh{Q}_k \cJ_{(m_k,k),s+t} + \cJ_{(m_k,k),s+t+1} & \text{ if } i=m_k, 
	\end{cases}
\\
& [\cX_{(i,k),t}^{\pm}, \cX_{(j,l),s}^{\pm}]=0 \quad \text{ if } (j,l) \not= (i\pm 1,k), 
\\
& [\cX_{(i,k),t+1}^+, \cX_{(i \pm 1,k),s}^+] = [\cX_{(i,k),t}^+, \cX_{(i\pm1,k), s+1}^+], 
	\quad 
	[\cX_{(i,k),t+1}^-, \cX_{(i \pm 1,k),s}^-] = [\cX_{(i,k),t}^-, \cX_{(i\pm1,k), s+1}^-], 
\\
&  [\cX_{(i,k),s}^+, [\cX_{(i,k),t}^+, \cX_{(i \pm 1,k),u}^+]]
	= [\cX_{(i,k),s}^-, [\cX_{(i,k),t}^-, \cX_{(i \pm 1,k),u}^-]]
	=0, 
\end{align*}
where we put $\cJ_{(i,k),t} = \cI_{(i,k),t} - \cI_{(i+1,k),t}$. 
Then we have the following isomorphism 
between $\Fgl_m^{\lan \bQ \ran}[x]$ and $\fg_{\wh{\bQ}}(\bm)$ 
under the suitable choice of the deformation parameters $\bQ$. 

\begin{lem} 
\label{Lemma iso slmQx gQm}
Assume that $\wh{Q}_i \not=0$ for all $i=1,2,\dots,r-1$. 
We take $\bQ =(Q_1,Q_2,\dots, Q_{m-1}) \in \CC^{m-1}$ as 
\begin{align*}
Q_i = 
	\begin{cases} 
		\wh{Q}_k^{-1} & \text{ if } \zeta^{-1}(i) = (m_k,k) \text{ for some } k=1,2,\dots, r-1, 
		\\
		0 & \text{ otherwise}. 
	\end{cases}
\end{align*}
Then we have the isomorphism of Lie algebras 
$\Phi : \Fgl_m^{\lan \bQ \ran}[x] \ra \fg_{\wh{\bQ}}(\bm) $ such taht 
\begin{align*}
&\cX_{i,t}^+ \mapsto 
		\begin{cases} 
			\cX_{\zeta^{-1}(i), t}^+ 
				& \text{ if } \zeta^{-1}(i) \not= (m_k,k) \text{ for all } k=1,\dots, r-1, 
			\\
			- \wh{Q}_k^{-1} \cX_{\zeta^{-1}(i),t}^+ 
				&\text{ if } \zeta^{-1}(i)=(m_k,k) \text{ for some } k=1,\dots, r-1, 
		\end{cases}
\\
& \cX_{i,t}^- \mapsto \cX_{\zeta^{-1}(i),t}^-, 
	\quad 
	\cI_{j,t} \mapsto \cI_{\zeta^{-1}(j),t}. 
\end{align*}
\end{lem}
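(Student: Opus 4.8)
The plan is to construct $\Phi$ directly on generators as specified, verify that it respects all the defining relations of $\fg_{\wh{\bQ}}(\bm)$ (so that it is a well-defined Lie algebra homomorphism), and then exhibit an explicit two-sided inverse, which will simultaneously establish bijectivity. Since both Lie algebras are given by generators and relations, the heart of the matter is the relation-checking: to see that $\Phi$ is well-defined it suffices to verify that the images under $\Phi$ of the generators of $\Fgl_m^{\lan \bQ \ran}[x]$ satisfy the relations (L$'$1), (L$'$2), (L$'$3), (L4), (L5), (L6), and likewise that the proposed inverse map respects the relations defining $\fg_{\wh{\bQ}}(\bm)$.

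**First I would** set up notation carefully around the bijection $\zeta$. The key observation is that the chosen $\bQ$ has $Q_i = \wh{Q}_k^{-1} \neq 0$ exactly when $\zeta^{-1}(i) = (m_k, k)$ marks the ``boundary'' between the $k$-th and $(k+1)$-th blocks (for $k \leq r-1$), and $Q_i = 0$ otherwise, i.e.\ at the interior nodes of a block. This is precisely the dichotomy appearing in the $\cX^+ \cX^-$ bracket of $\fg_{\wh{\bQ}}(\bm)$: interior nodes give $\cJ_{(i,k),s+t}$, while boundary nodes give $-\wh{Q}_k \cJ_{(m_k,k),s+t} + \cJ_{(m_k,k),s+t+1}$. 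The sign twist by $-\wh{Q}_k^{-1}$ on $\cX^+$ at boundary nodes is exactly engineered to convert the relation (L$'$3), namely $[\cX_{i,t}^+, \cX_{i,s}^-] = \cJ_{i,s+t} - Q_i \cJ_{i,s+t+1}$ with $Q_i = \wh{Q}_k^{-1}$, into the target relation: applying $\Phi$ to the left side at a boundary node $i$ gives $-\wh{Q}_k^{-1}[\cX_{\zeta^{-1}(i),t}^+, \cX_{\zeta^{-1}(i),s}^-] = -\wh{Q}_k^{-1}(-\wh{Q}_k \cJ_{(m_k,k),s+t} + \cJ_{(m_k,k),s+t+1}) = \cJ_{(m_k,k),s+t} - \wh{Q}_k^{-1}\cJ_{(m_k,k),s+t+1}$, which matches $\Phi$ applied to the right side. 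I would check the interior-node case and the $i \neq j$ case of (L$'$3) separately (both straightforward, as no twist intervenes).

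**The remaining relations** are comparatively routine: (L$'$1) and (L$'$2) transport directly once one checks that $a'_{ji} = a'_{\zeta^{-1}(j)\zeta^{-1}(i)}$ whenever both $i$ and $\zeta^{-1}(i)$ refer to genuine simple-root nodes, and that the scalar $-\wh{Q}_k^{-1}$ in $\Phi(\cX_{i,t}^+)$ cancels between the two sides of (L$'$2) (it appears linearly and identically on left and right). The Serre-type relations (L4), (L5), (L6) involve brackets of $\cX^+$'s (or of $\cX^-$'s) among themselves; under $\Phi$ the scalar factors $-\wh{Q}_k^{-1}$ are just overall nonzero multiplicative constants on each bracket, so a relation of the form $[\cdots] = [\cdots]$ or $[\cdots]=0$ is preserved up to an identical nonzero scalar on both sides, hence preserved. **The main obstacle** I anticipate is purely bookkeeping: tracking the block-boundary indices through $\zeta$ and $\zeta^{-1}$ and confirming that the ``$i \pm 1$'' adjacency in $\Fgl_m^{\lan \bQ \ran}[x]$ corresponds correctly to the ``$(i\pm 1, k)$'' adjacency within a block of $\fg_{\wh{\bQ}}(\bm)$ — in particular at boundary nodes where two different twisting scalars $\wh{Q}_k^{-1}$, $\wh{Q}_{k+1}^{-1}$ could both appear in a single bracket in (L5), so one must check these cancel or combine correctly. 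Once well-definedness is established in both directions, the fact that $\Phi$ and its candidate inverse send generators to generators and compose to the identity on generators (the scalars $-\wh{Q}_k^{-1}$ and $-\wh{Q}_k$ being mutually inverse) shows $\Phi$ is an isomorphism. As a cross-check, I would compare images of the bases given in Proposition \ref{Prop basis slmQ[x] and glmQ[x]}, confirming $\Phi$ carries the basis $\{\cE_{i,j;t}\} \cup \{\cI_{j,t}\}$ to a basis of $\fg_{\wh{\bQ}}(\bm)$, which independently yields bijectivity.
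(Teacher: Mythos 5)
Your proposal is correct and follows essentially the same route as the paper: the paper's proof simply states that well-definedness is verified by checking the defining relations and then writes down the same explicit inverse (sending $\cX_{(i,k),t}^+$ to $\cX_{\zeta((i,k)),t}^+$ or $-\wh{Q}_k\cX_{\zeta((i,k)),t}^+$ according to whether $i\not=m_k$ or $i=m_k$). Your expanded computation of the (L$'$3) check at boundary nodes correctly identifies the only nontrivial point.
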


\begin{proof} 
We see the well-definedness of $\Phi$ by checking the defining relations. 
The inverse homomorphism of $\Phi$ is given by 
\begin{align*}
\cX_{(i,k),t}^+  \mapsto 
	\begin{cases} 
		\cX_{\zeta((i,k)),t}^+ & \text{ if } i \not= m_k,
		\\
		- \wh{Q}_k \cX_{\zeta((i,k)),t}^+ & \text{ if } i=m_k, 
	\end{cases} 
\quad 
\cX_{(i,k),t}^- \mapsto \cX_{\zeta((i,k)),t}^-, 
\quad 
\cI_{(j,l),t} \mapsto \cI_{\zeta((j,l)),t}. 
\end{align*}
\end{proof}



\section{Representations of $\Fsl_m^{\lan \bQ \ran}[x]$ } 
\label{section Rep slmx}

In this section, we give some fundamental results 
for finite dimensional $U(\Fsl_m^{\lan \bQ \ran}[x])$-modules 
by using the standard argument.  

\para 
Put 
$\fh = \bigoplus_{i=1}^{m-1} \CC \cJ_{i,0} \subset \Fsl_m^{\lan \bQ \ran}[x] $,  
then 
$\fh$ is a commutative Lie subalgebra of $\Fsl_m^{\lan \bQ \ran}[x]$. 
(Note that, if $Q_i =0$ for all $i=1,\dots,m-1$, 
$\fh$ is a Cartan subalgebra of $\Fsl_m$.) 
Let $\fh^{\ast}$  be the dual space of $\fh$. 
For each $i=1,2,\dots, m-1$, 
we take $\a_i \in \fh^\ast$ as 
$\a_i(\cJ_{j,0})= a_{ji}$ for $j=1,\dots,m-1$. 
Put $Q^+ = \bigoplus_{i=1}^{m-1} \ZZ_{\geq 0} \a_{i} \subset \fh^\ast$.  
We define the partial order on $\fh^{\ast} $ 
by  $\la \geq \mu$ if $\la - \mu \in Q^+$ for $\la,\mu \in \fh$. 


\para 
For $U(\Fsl_m^{\lan \bQ \ran}[x])$-mdoule $M$, we consider the decomposition 
$M = \bigoplus_{\la \in \fh^{\ast}} \wt{M}_{\la}$, where 
$\wt{M}_{\la} = \{ x \in M \,|\, ( h - \la (h))^N \cdot x =0 \text{ for }  h \in \fh \text{ and } N \gg 0\}$, 
namely $M = \bigoplus_{\la \in \fh^{\ast}} \wt{M}_{\la}$ is the decomposition 
to the generalized simultaneous eigenspaces  for the action of $\fh$. 
By the relation (L2), we have 
\begin{align*}
\cX_{i,t}^{\pm} \cdot \wt{M}_{\la} \subset \wt{M}_{\la \pm \a_i} 
\quad ( 1 \leq i \leq m-1, \, t \geq 0).
\end{align*}
Thus, if $U(\Fsl_m^{\lan \bQ \ran}[x])$-module $M\not=0$ is finite dimensional, 
there exists $\la \in \fh^{\ast}$ such that 
$\wt{M}_\la \not=0$ and $\cX_{i,t}^+ \cdot \wt{M}_{\la}=0$ for all $i=1,2,\dots, m-1$ and $t \geq 0$. 
On the other hand, $\wt{M}_{\la}$ ($\la \in \fh^{\ast}$) is closed under the action of $\bn^0$ by the relation (L1). 
Thus, we can take a simultaneous eigenvector $v \in \wt{M}_{\la}$ for the action of $\bn^0$. 
Then we have the following lemma. 


\begin{lem}
\label{Lemma finite dim module}
For a finite dimensional $U(\Fsl_m^{\lan \bQ \ran}[x])$-module $M \not=0$, 
there exists $v_0 \in M$ ($v_0 \not=0$) satisfying the following conditions: 
\begin{enumerate} 
\item 
$\cX_{i,t}^+ \cdot v_0 =0$ for all $i=1,\dots,m-1$ and $t \geq 0$, 

\item 
$\cJ_{i,t} \cdot v_0 = u_{i,t} v_0$ ($u_{i,t} \in \CC$) for each $i=1,\dots, m-1$ and $t \geq 0$. 
\end{enumerate}
Moreover, if $M$ is simple, we have $M = U(\Fsl_m^{\lan \bQ \ran}[x]) \cdot v_0$. 
\end{lem}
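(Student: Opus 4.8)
The plan is to exploit the triangular decomposition
$\Fsl_m^{\lan \bQ \ran}[x] = \bn^- \op \bn^0 \op \bn^+$ from
Proposition \ref{Prop basis slmQ[x] and glmQ[x]} (\roiv) together with the weight-space
analysis already carried out in the paragraph preceding the statement. Most of the work
is in fact done there: since $M$ is finite dimensional, its decomposition
$M = \bigoplus_{\la \in \fh^\ast} \wt M_\la$ into generalized $\fh$-eigenspaces is a
\emph{finite} direct sum, and the relation (L2) gives
$\cX_{i,t}^+ \cdot \wt M_\la \subset \wt M_{\la + \a_i}$. First I would choose a weight
$\la$ that is maximal among those with $\wt M_\la \neq 0$ with respect to the partial
order $\geq$ on $\fh^\ast$ (such a maximal element exists because the set of weights is
finite and nonempty). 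Maximality forces $\wt M_{\la + \a_i} = 0$ for every $i$, hence
$\cX_{i,t}^+ \cdot \wt M_\la = 0$ for all $i$ and all $t \geq 0$; this is precisely
condition (i) on the whole space $\wt M_\la$.

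Next I would promote this to the existence of a single vector satisfying both (i) and
(ii). The key point is that $\wt M_\la$ is stable under the commutative subalgebra
$\bn^0$: by the relation (L1) the operators $\{\cJ_{i,t}\}$ all commute, and by (L2) each
$\cJ_{i,t}$ preserves $\fh$-weights (since $\cJ_{i,t} \in \bn^0$ commutes with the
$\cJ_{j,0}$ spanning $\fh$ up to scalars dictated by (L1), it maps each generalized
eigenspace to itself), so the family $\{\cJ_{i,t}\}$ acts on the finite-dimensional
space $\wt M_\la$ as a commuting family of operators. A commuting family of operators on
a nonzero finite-dimensional complex vector space has a common eigenvector; choosing
such a $v_0 \in \wt M_\la$ yields $\cJ_{i,t}\cdot v_0 = u_{i,t} v_0$ for scalars
$u_{i,t} \in \CC$, which is condition (ii). Condition (i) for $v_0$ is inherited from the
fact that it already holds on all of $\wt M_\la$.

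For the final assertion, suppose $M$ is simple. The submodule
$U(\Fsl_m^{\lan \bQ \ran}[x]) \cdot v_0$ is nonzero and is a genuine submodule, so by
simplicity it equals $M$; this is immediate once $v_0 \neq 0$ has been produced, and
requires no further computation.

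I expect the only genuinely delicate point to be the verification that $\bn^0$ preserves
$\wt M_\la$ and acts there as a commuting family. The commutation of the $\cJ_{i,t}$
among themselves is (L1), but I should check carefully that each $\cJ_{i,t}$ commutes
with the defining operators $\cJ_{j,0}$ of $\fh$ — again this is a special case of (L1),
so $\cJ_{i,t}$ indeed sends each generalized $\fh$-eigenspace into itself rather than
merely respecting the eigenvalues up to a shift. Granting this, the common
eigenvector step is the standard fact that finitely many commuting endomorphisms of a
nonzero finite-dimensional $\CC$-vector space share an eigenvector, applied to the
(finite-dimensional) restrictions of the $\cJ_{i,t}$ to $\wt M_\la$. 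Everything else is
a direct reading of the weight decomposition and the triangular structure.
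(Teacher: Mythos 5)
Your argument is correct and is essentially the paper's own: the paper's paragraph preceding the lemma carries out exactly this weight-space analysis (finiteness of the set of $\fh$-weights plus (L2) to kill the $\cX_{i,t}^+$ on a maximal weight space, then (L1) to get a common eigenvector for the commuting family $\bn^0$), and the final assertion follows from simplicity as you say. No gaps.
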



\para \textbf{Highest weight modules.} 
For $U(\Fsl_m^{\lan \bQ \ran}[x])$-module $M$, 
we say that $M$ is a highest weight module if there exists $v_0 \in M$ satisfying the following conditions: 
\begin{enumerate} 
\item 
$M$ is generated by $v_0$ as a $U(\Fsl_m^{\lan \bQ \ran}[x])$-module. 

\item 
$\cX_{i,t}^+ \cdot v_0 =0$ for all $i =1,\dots,m-1$ and $t \geq 0$. 

\item 
$\cJ_{i,t} \cdot v_0 = u_{i,t} v_0$ ($u_{i,t} \in \CC$) for each $i=1,\dots, m-1$ and $t \geq 0$. 
\end{enumerate}
In this case, we say that $(u_{i,t})_{1 \leq i \leq m-1, t \geq 0} \in \prod_{i=1}^{m-1} \prod_{t \geq 0} \CC$ 
is the highest weight of $M$, and that $v_0$ is a highest weight vector of $M$. 

Let $M$ be a highest weight $U(\Fsl_m^{\lan \bQ \ran}[x])$-module with a highest weight 
$\bu=(u_{i,t})_{1 \leq i \leq m-1, t \geq 0} \in \prod_{i=1}^{m-1} \prod_{t \geq 0} \CC$ 
and a highest weight vector $v_0 \in M$. 
Thanks to the triangular decomposition (Proposition \ref{Prop basis slmQ[x] and glmQ[x]} (\roiv)) 
together with the above conditions, 
we have 
$M= U(\bn^-) \cdot v_0$. 
Let $\la_{\bu} \in \fh^\ast$ be as $\la_{\bu} (\cJ_{i,0}) = u_{i,0}$ for $i=1,\dots,m-1$. 
By $M=U(\bn^-) \cdot v_0$ and  the relation (L2), 
we have the weight space decomposition 
\begin{align}
\label{wt sp decom h.w.}
M= \bigoplus_{\mu \in \fh^{\ast} \atop \mu \leq \la_{\bu}} M_{\mu}, 
\text{ where } 
M_{\mu} = \{x \in M \,|\, h \cdot x = \mu (h) \cdot x \text{ for } h \in \fh \}, 
\end{align}
and we also have $\dim_{\CC} M_{\la_{\bu}} =1$. 


\para \textbf{Verma modules.} 
For $\bu=(u_{i,t}) \in \prod_{i=1}^{m-1} \prod_{t \geq 0} \CC$, 
let $\fI(\bu)$ be the left ideal of $U(\Fsl_m^{\lan \bQ \ran}[x])$ generated by 
$\cX_{i,t}^+$ ($1 \leq i \leq m-1$, $t \geq 0$) and $\cJ_{i,t} - u_{i,t}$ ($1 \leq i \leq m-1$, $t \geq 0$). 
We define the Verma module $\cM(\bu) = U(\Fsl_m^{\lan \bQ \ran}[x])/ \fI(\bu)$. 
Then $\cM(\bu)$ is a highest weight module of highest weight $\bu$, 
and any highest weight module of highest weight $\bu$ is realized as a quotient of the Verma module $\cM(\bu)$. 
By the weight space decomposition \eqref{wt sp decom h.w.}, 
we see that $\cM(\bu)$ has the unique maximal proper submodule $\rad \cM(\bu)$. 
Put $\cL(\bu) = \cM(\bu)/ \rad \cM(\bu)$, then we have the following proposition. 


\begin{prop}
\label{Prop simple slmx HW}
For $\bu= (u_{i,t}) \in \prod_{i=1}^{m-1} \prod_{t \geq 0} \CC$, 
a highest weight simple $U(\Fsl_m^{\lan \bQ \ran}[x])$-module of highest weight $\bu$ is isomorphic to $\cL(\bu)$. 
Moreover, any finite dimensional simple $U(\Fsl_m^{\lan \bQ \ran} [x])$-module is isomorphic to $\cL(\bu)$ 
for some $\bu= (u_{i,t}) \in \prod_{i=1}^{m-1} \prod_{t \geq 0} \CC$. 
\end{prop}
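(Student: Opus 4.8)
The plan is to run the standard highest weight argument, feeding in the Verma module construction together with Lemma~\ref{Lemma finite dim module}.

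First I would establish the first assertion. Let $M$ be a simple highest weight module of highest weight $\bu$, with highest weight vector $v_0$. As recorded in the paragraph introducing Verma modules, every highest weight module of highest weight $\bu$ is a quotient of $\cM(\bu)$; concretely, the relations $\cX_{i,t}^+\cdot v_0=0$ and $(\cJ_{i,t}-u_{i,t})\cdot v_0=0$ together with $M=U(\Fsl_m^{\lan\bQ\ran}[x])\cdot v_0$ show that $\fI(\bu)$ annihilates $v_0$, so there is a surjection $\pi:\cM(\bu)\tra M$ carrying the canonical generator to $v_0$. Since $M$ is simple and nonzero, $\ker\pi$ is a maximal proper submodule of $\cM(\bu)$. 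As $\cM(\bu)$ possesses the unique maximal proper submodule $\rad\cM(\bu)$, we conclude $\ker\pi=\rad\cM(\bu)$, and hence $M\cong\cM(\bu)/\rad\cM(\bu)=\cL(\bu)$.

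Next I would deduce the second assertion from Lemma~\ref{Lemma finite dim module}. Given a finite dimensional simple module $M\neq 0$, that lemma produces a nonzero $v_0\in M$ with $\cX_{i,t}^+\cdot v_0=0$ for all $i,t$ and $\cJ_{i,t}\cdot v_0=u_{i,t}v_0$ for scalars $u_{i,t}\in\CC$, and its ``moreover'' clause gives $M=U(\Fsl_m^{\lan\bQ\ran}[x])\cdot v_0$ since $M$ is simple. Thus $M$ is a highest weight module of highest weight $\bu=(u_{i,t})$, and being simple it is isomorphic to $\cL(\bu)$ by the first assertion.

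The argument is essentially formal, so I do not anticipate a genuine obstacle; the one point that must be invoked with care is the identification $\ker\pi=\rad\cM(\bu)$, which uses the uniqueness of the maximal proper submodule of $\cM(\bu)$. That uniqueness in turn rests on the weight space decomposition \eqref{wt sp decom h.w.} with one-dimensional top weight space $M_{\la_{\bu}}$: every proper submodule is a sum of its weight spaces avoiding $\CC v_0$, so the sum of all proper submodules is again proper and is the desired unique maximal one. Since this has already been recorded just before the statement, the proof reduces to assembling these pieces.
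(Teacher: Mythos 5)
Your proposal is correct and follows exactly the paper's route: the paper's own proof simply cites Lemma~\ref{Lemma finite dim module} to get a highest weight vector and then appeals to "the above arguments" (the Verma module quotient and the uniqueness of $\rad\cM(\bu)$ coming from the weight space decomposition \eqref{wt sp decom h.w.}), which is precisely what you have spelled out. No gaps; your write-up is just a more explicit version of the same argument.
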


\begin{proof} 
By Lemma \ref{Lemma finite dim module}, 
a finite dimensional simple $U(\Fsl_m^{\lan \bQ \ran}[x])$-module is a highest weight module. 
Then we have the proposition by the above arguments. 
\end{proof}



\section{Representations of $\Fgl_m^{\lan \bQ \ran}[x]$ } 

For finite dimensional $U(\Fgl_m^{\lan \bQ \ran}[x])$-modules, 
we can develop a similar argument as in the case of $U(\Fsl_m^{\lan \bQ \ran}[x])$ 
discussed in the previous section. 
In this section, 
we give only some notation for $U(\Fgl_m^{\lan \bQ \ran}[x])$-modules. 

\para \textbf{Highest weight modules.} 
For $U(\Fgl_m^{\lan \bQ \ran}[x])$-module $M$, 
we say that $M$ is a highest weight module if there exists $v_0 \in M$ satisfying the following conditions: 
\begin{enumerate} 
\item 
$M$ is generated by $v_0$ as a $U(\Fgl_m^{\lan \bQ \ran}[x])$-module. 

\item 
$\cX_{i,t}^+ \cdot v_0 =0$ for all $i =1,\dots,m-1$ and $t \geq 0$. 

\item 
$\cI_{j,t} \cdot v_0 = \wt{u}_{j,t} v_0$ ($\wt{u}_{j,t} \in \CC$) for each $j=1,\dots, m$ and $t \geq 0$. 
\end{enumerate}
In this case, we say that $(\wt{u}_{j,t})_{1 \leq j \leq m, t \geq 0} \in \prod_{j=1}^{m} \prod_{t \geq 0} \CC$ 
is the highest weight of $M$, and that $v_0$ is a highest weight vector of $M$. 


\para \textbf{Verma modules.} 
For $\wt\bu=(\wt{u}_{j,t}) \in \prod_{j=1}^{m} \prod_{t \geq 0} \CC$, 
let $\fI(\wt\bu)$ be the left ideal of $U(\Fgl_m^{\lan \bQ \ran}[x])$ generated by 
$\cX_{i,t}^+$ ($1 \leq i \leq m-1$, $t \geq 0$) and $\cI_{j,t} - \wt{u}_{j,t}$ ($1 \leq j \leq m$, $t \geq 0$). 
We define the Verma module $ \cM(\wt\bu) = U(\Fgl_m^{\lan \bQ \ran}[x])/ \fI(\wt\bu)$. 
Then $\cM(\wt\bu)$ is a highest weight module of highest weight $\wt\bu$, 
and any highest weight module of highest weight $\wt{\bu}$ is realized as a quotient of the Verma module $\cM(\wt\bu)$. 
$\cM(\wt{\bu})$ has the unique maximal proper submodule $\rad \cM(\wt\bu)$. 
Put $\cL(\wt\bu) = \cM(\wt\bu)/ \rad \cM(\wt\bu)$, then we have the following proposition. 


\begin{prop}
\label{Prop h.w. simple glmQ}
For $\wt\bu= (\wt{u}_{j,t}) \in \prod_{j=1}^{m} \prod_{t \geq 0} \CC$, 
a highest weight simple $U(\Fgl_m^{\lan \bQ \ran}[x])$-module of highest weight $\wt\bu$ is isomorphic to $\cL(\wt\bu)$. 
Moreover, any finite dimensional simple $U(\Fgl_m^{\lan \bQ \ran} [x])$-module is isomorphic to $\cL(\wt\bu)$ 
for some $\wt\bu= (u_{j,t}) \in \prod_{j=1}^{m} \prod_{t \geq 0} \CC$. 
\end{prop}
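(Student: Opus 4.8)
The plan is to mirror the development carried out for $\Fsl_m^{\lan\bQ\ran}[x]$ in Section \ref{section Rep slmx}, since the relations (L'1)--(L'3) together with (L4)--(L6) have exactly the same shape as (L1)--(L6) once $\bn^0$ is replaced by $\wt\bn^0$ and $\fh = \bigoplus_{i=1}^{m-1}\CC\cJ_{i,0}$ is enlarged to $\wt\fh = \bigoplus_{j=1}^{m}\CC\cI_{j,0}$. The proposition splits into two assertions, and I would treat them in the order stated.

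First I would establish the first assertion, which is essentially formal. By the Verma module paragraph above, any highest weight module of highest weight $\wt\bu$ is a quotient of $\cM(\wt\bu)$, and $\cM(\wt\bu)$ admits a unique maximal proper submodule $\rad\cM(\wt\bu)$; hence $\cM(\wt\bu)$ has a unique simple quotient, namely $\cL(\wt\bu)$. A highest weight simple module of highest weight $\wt\bu$ is such a quotient and is itself simple, so it must coincide with $\cL(\wt\bu)$. The only input here is the uniqueness of the maximal proper submodule of $\cM(\wt\bu)$, which follows from the weight space decomposition $M = \bigoplus_{\mu}M_\mu$ (with a one-dimensional top weight space) in precisely the same way as in the $\Fsl_m$ case, using the triangular decomposition of Proposition \ref{Prop basis slmQ[x] and glmQ[x]} (\roiv).

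The substance lies in the second assertion, for which I would first prove the $\Fgl_m$ analogue of Lemma \ref{Lemma finite dim module}: any nonzero finite dimensional $U(\Fgl_m^{\lan\bQ\ran}[x])$-module $M$ contains a nonzero vector $v_0$ with $\cX_{i,t}^+\cdot v_0 = 0$ for all $i,t$ and $\cI_{j,t}\cdot v_0 = \wt{u}_{j,t} v_0$ for all $j,t$, and $M = U(\Fgl_m^{\lan\bQ\ran}[x])\cdot v_0$ when $M$ is simple. Since $\wt\bn^0$ is commutative by (L'1), I would decompose $M$ into generalized simultaneous eigenspaces $\wt M_\la$ ($\la\in\wt\fh^{\ast}$) for the action of $\wt\fh$. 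Defining $\a_i'\in\wt\fh^{\ast}$ by $\a_i'(\cI_{j,0}) = a'_{ji}$, relation (L'2) gives $\cX_{i,t}^{\pm}\cdot\wt M_\la \subset \wt M_{\la\pm\a_i'}$; because the $\a_i'$ ($1\leq i\leq m-1$) are linearly independent, the resulting partial order on $\wt\fh^{\ast}$ together with finite dimensionality of $M$ yields a weight $\la$ maximal with $\wt M_\la\neq 0$, forcing $\cX_{i,t}^+\cdot\wt M_\la = 0$. Finally $\wt M_\la$ is stable under the commutative algebra $\wt\bn^0$ by (L'1), so it contains a simultaneous eigenvector $v_0$, the desired vector. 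With this lemma in hand, a finite dimensional simple $M$ is generated by such a $v_0$ and is therefore a highest weight module, so the first assertion identifies it with $\cL(\wt\bu)$ for $\wt\bu = (\wt{u}_{j,t})$.

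I do not expect a genuine obstacle, since each step transports verbatim from Section \ref{section Rep slmx}. The one point meriting a check is that enlarging $\fh$ to $\wt\fh$ (dimension $m$ rather than $m-1$) does not disturb the maximal-weight argument: the raising roots $\a_i'$ remain linearly independent in $\wt\fh^{\ast}$, so the partial order is well defined and a finite dimensional module still possesses a top weight, exactly as in the special linear case.
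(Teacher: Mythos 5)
Your proposal is correct and follows exactly the route the paper intends: the paper gives no explicit proof of this proposition, stating only that the arguments of the previous section for $U(\Fsl_m^{\lan \bQ \ran}[x])$ (the generalized eigenspace decomposition for the Cartan part, the maximal-weight argument, and the unique simple quotient of the Verma module) carry over verbatim, which is precisely what you have written out, including the correct replacement of $\fh$ by $\wt\fh$ and the check that the roots $\a_i'$ remain linearly independent.
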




\section{Rank $1$ case ; some relations in $U (\Fsl_2^{\lan Q \ran} [x])$} 

\para 
Take $Q \in \CC$, then $\Fsl_2^{\lan Q \ran}[x]$ is a Lie algebra over $\CC$ 
generated by $\cX_t^{\pm}$ and $\cJ_t$ ($t \in \ZZ_{\geq 0}$) together with the following defining relations: 
\begin{align*}
&\tag{L1} 
[\cJ_s, \cJ_t] =0, 
\\ & \tag{L2} 
[\cJ_s, \cX_t^{\pm}] = \pm 2 \cX_{s+t}^{\pm}, 
\\ & \tag{L3} 
[\cX_t^+, \cX_s^-] = \cJ_{s+t} - Q \cJ_{s+t+1}, 
\\ & \tag{L4}
[\cX_t^{\pm}, \cX_s^{\pm}] =0.
\end{align*}
(In the rank $1$ case, we omit the first index of the generators since it is trivial.) 
By checking the defining relations, 
we see that there exists the algebra anti-automorphism  $ \dag : U(\Fsl_2^{\lan Q \ran}[x]) \ra U(\Fsl_2^{\lan Q \ran}[x])$ 
such that 
\begin{align}
\label{Def dag}
\dag (\cX_t^+)=\cX_t^-, \quad \dag (\cX_t^-) = \cX_t^+, \quad \dag (\cJ_t) = \cJ_t.
\end{align}
Clearly, $\dag^2$ is the identity on $U(\Fsl_2^{\lan Q \ran}[x])$. 


\para 
For $t, b \in \ZZ_{\geq 0}$, we define an element $\cX_t^{+ (b)}$ (resp. $\cX_t^{-(b)}$) of $ U(\Fsl_2^{\lan Q\ran}[x])$ by 
\begin{align*}
\cX_t^{\pm (b)} = \frac{(\cX_t^{\pm})^b}{b !}. 
\end{align*}
For  convenience, 
we put 
$\cX_t^{\pm(b)}=0$ for $b \in \ZZ_{<0}$.

For $t,p,h \in \ZZ_{\geq 0}$, 
we define an element $\cX_t^{+ ((p);h)}$ (resp. $\cX_t^{- ((p);h)}$) of $U(\Fsl_2^{\lan Q \ran}[x])$ by 
\begin{align}
\label{Def cX ph}
\cX_t^{\pm((0);h)} =1, 
\quad 
\cX_t^{\pm ((p);h)} = \sum_{w=0}^p \begin{pmatrix} p \\ w \end{pmatrix} (-Q)^w \cX_{t+ph +w}^{\pm} 
	\,\, \text{ for } p >0.
\end{align}
Clearly, we have $\dag (\cX_t^{+((p);h)}) = \cX_t^{-((p);h)}$. 
For examples, we have 
\begin{align*}
& \cX_t^{\pm ((0); h)} = 1, 
	\quad 
	\cX_t^{\pm ((1); h)} = \cX_{t+h}^{\pm} + (-Q) \cX_{t+h+1}^{\pm}, 
	\\ &
	\cX_t^{\pm ((2); h)} = \cX_{t+ 2 h}^{\pm} + 2 (-Q) \cX_{t+ 2 h +1}^{\pm} + (-Q)^2 \cX_{t+ 2 h +2}^{\pm}, 
	\\ & 
	\cX_t^{\pm ((3); h)} = \cX_{t+ 3 h}^{\pm} + 3 (-Q) \cX_{t+ 3 h +1}^{\pm} + 3 (-Q)^2 \cX_{t + 3 h +2}^{\pm} 
						+ (-Q)^3 \cX_{t+ 3 h +3}^{\pm}. 
\end{align*}

For $s,p \in \ZZ_{\geq 0}$, 
we define an element $\cJ_s^{\lan p \ran} $ of $ U(\Fsl_2^{\lan Q \ran}[x]))$ inductively on $p$ by 
\begin{align}
\label{Def cJ p}
\cJ_s^{\lan 0 \ran}=1, 
\quad 
\cJ_s^{\lan p \ran} 
	= \frac{1}{p} \sum_{z=1}^p (-1)^{z-1} 
		\Big( \sum_{w=0}^z \begin{pmatrix} z \\ w \end{pmatrix}  (-Q)^w \cJ_{z s + w}\Big) \cJ_s^{\lan p-z \ran} 
	\text{ for } p >0. 
\end{align}
For examples, we have 
\begin{align*}
& \cJ_s^{\lan 0 \ran}=1, 
	\quad \cJ_s^{\lan 1 \ran} = \cJ_s + (-Q) \cJ_{s+1}, 
\\
& \cJ_s^{\lan 2 \ran} = 
	\frac{1}{2} \Big( \big( \cJ_s^2 - \cJ_{ 2 s} \big) + 2 (-Q) \big( \cJ_s \cJ_{s+1} - \cJ_{ 2 s+1} \big) 
		+ (-Q)^2 \big( \cJ_{s+1}^2  - \cJ_{ 2 s +2} \big) \Big), 
\\
& \cJ_s^{\lan 3 \ran} = 
	\frac{1}{3} \Big( \big( \cJ_s^3 - 2 \cJ_s \cJ_{2 s} + \cJ_{ 3 s} \big) 
		+ 3 (-Q) \big( \cJ_s^2 \cJ_{s+1} - \cJ_s \cJ_{ 2 s +1} - \cJ_{s+1} \cJ_{2 s} + \cJ_{3 s +1} \big) 
		\\ & \hspace{5em}
		+ (-Q)^2 \big( 3 \cJ_s \cJ_{s+1}^2 - 2 \cJ_s \cJ_{ 2 s +2} - 4 \cJ_{s+1} \cJ_{ 2 s +1} + 3 \cJ_{ 3 s +2} \big)  
		\\ & \hspace{5em}
		+ (-Q)^3 \big( \cJ_{s+1}^3 - 2 \cJ_{s+1} \cJ_{ 2 s + 2} + \cJ_{ 3 s +3} \big) \Big). 
\end{align*}


\begin{lem} 
\label{Lemma cJsp cXt}
For $s,t,p \in \ZZ_{\geq 0}$, we have the following relations in $U(\Fsl_2^{\lan Q \ran}[x])$. 
\begin{enumerate} 
\item 
	$\dis [\cJ_s^{\lan p \ran}, \cX_t^+] = \sum_{z=1}^p (-1)^{z+1} (z+1) \cJ_s^{\lan p-z \ran} \cX_t^{+((z);s)}$. 

\item 
	$\dis [ \cJ_s^{\lan p \ran}, \cX_t^-] = - \sum_{z=1}^p (-1)^{z+1} (z+1) \cX_t^{-((z);s)} \cJ_s^{\lan p-z \ran}$. 
\end{enumerate}
\end{lem}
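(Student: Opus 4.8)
The plan is to establish (i) by induction on $p$ and then to deduce (ii) from (i) by applying the anti-automorphism $\dag$ of \eqref{Def dag}. The obstacle to a naive induction is that expanding $[\cJ_s^{\lan p\ran},\cX_t^+]$ through the recursion \eqref{Def cJ p} creates terms in which a factor $\cX_t^{+((z);s)}$ lies to the \emph{left} of a factor $\cJ_s^{\lan p-z\ran}$; moving it back to the right forces one to commute $\cJ_s^{\lan p-z\ran}$ with the higher elements $\cX_t^{+((z);s)}$, not merely with $\cX_t^+$. I would therefore prove the stronger family of identities
\begin{align}
[\cJ_s^{\lan p\ran},\cX_t^{+((k);s)}]=\sum_{z=1}^p(-1)^{z+1}(z+1)\,\cJ_s^{\lan p-z\ran}\,\cX_t^{+((k+z);s)}\qquad(k\geq 0),\tag{$\star_k$}
\end{align}
where on the left $\cX_t^{+((0);s)}$ is read as $\cX_t^+$; the case $k=0$ is precisely (i). All the $(\star_k)$ are proved together by a single induction on $p$, the base case $p=0$ being trivial since $\cJ_s^{\lan 0\ran}=1$.

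For the two elementary inputs, write $\cD_z=\sum_{w=0}^z\binom{z}{w}(-Q)^w\cJ_{zs+w}$, so that \eqref{Def cJ p} becomes the Newton-type recursion $p\,\cJ_s^{\lan p\ran}=\sum_{z=1}^p(-1)^{z-1}\cD_z\,\cJ_s^{\lan p-z\ran}$. From (L2) one reads off $[\cJ_r,\cX_t^{+((k);s)}]=2\,\cX_{t+r}^{+((k);s)}$, so each $\cX_t^{+((k);s)}$ behaves under bracketing with the $\cJ_r$ exactly like $\cX_t^+$; combining this with the Vandermonde identity $\sum_{v+w=u}\binom{k}{v}\binom{z}{w}=\binom{k+z}{u}$ gives, by a direct computation using \eqref{Def cX ph},
\begin{align}
[\cD_z,\cX_t^{+((k);s)}]=2\,\cX_t^{+((k+z);s)}\qquad(k\geq 0).
\end{align}

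In the inductive step I would bracket the recursion $p\,\cJ_s^{\lan p\ran}=\sum_z(-1)^{z-1}\cD_z\cJ_s^{\lan p-z\ran}$ with $\cX_t^{+((k);s)}$ and apply the Leibniz rule, writing $[\cD_z\cJ_s^{\lan p-z\ran},\cX_t^{+((k);s)}]=\cD_z[\cJ_s^{\lan p-z\ran},\cX_t^{+((k);s)}]+[\cD_z,\cX_t^{+((k);s)}]\cJ_s^{\lan p-z\ran}$. Substituting the inductive hypothesis $(\star_k)$ at level $p-z$ into the first summand and $[\cD_z,\cX_t^{+((k);s)}]=2\cX_t^{+((k+z);s)}$ into the second produces two kinds of terms: those already of the shape $\cJ_s^{\lan\cdot\ran}\cX$, which reassemble through the Newton recursion $\sum_z(-1)^{z-1}\cD_z\cJ_s^{\lan q-z\ran}=q\,\cJ_s^{\lan q\ran}$, and those carrying an $\cX$-factor on the left, which I would move back to the right by a further use of $(\star_{k+z})$ at the strictly smaller level $p-z$. (This is exactly where the strengthening to arbitrary $k$ is needed, and there is no circularity, since only values $p-z<p$ are invoked.) Collecting the coefficient of $\cJ_s^{\lan p-\kappa\ran}\cX_t^{+((k+\kappa);s)}$ then reduces the whole step to the elementary polynomial identity
\begin{align}
(\kappa+1)(p-\kappa)+2+(\kappa-1)(\kappa+2)=(\kappa+1)p,
\end{align}
valid for every $\kappa$. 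I expect this bookkeeping of the cross-terms, together with pinning down the correct strengthened statement $(\star_k)$, to be the only genuine difficulty; everything else is a mechanical use of (L1), (L2) and (L4).

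Finally, (ii) follows from (i) without further computation. Since every $\cJ_t$ is $\dag$-fixed and the $\cJ_t$ commute by (L1), the recursion \eqref{Def cJ p} shows inductively that $\dag(\cJ_s^{\lan p\ran})=\cJ_s^{\lan p\ran}$, while $\dag(\cX_t^{+((z);s)})=\cX_t^{-((z);s)}$ was noted after \eqref{Def cX ph}. Applying the anti-automorphism $\dag$ to (i) therefore sends $[\cJ_s^{\lan p\ran},\cX_t^+]$ to $-[\cJ_s^{\lan p\ran},\cX_t^-]$ and reverses each product $\cJ_s^{\lan p-z\ran}\cX_t^{+((z);s)}$ into $\cX_t^{-((z);s)}\cJ_s^{\lan p-z\ran}$; moving the resulting sign across then yields exactly the identity (ii).
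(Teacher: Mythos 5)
Your proposal is correct and follows essentially the same route as the paper: induction on $p$ through the Newton-type recursion \eqref{Def cJ p}, a second application of the inductive hypothesis to push the stray $\cX$-factors back to the right, the Vandermonde convolution to recombine the binomial coefficients, and the identical final coefficient identity $2+(\kappa-1)(\kappa+2)+(\kappa+1)(p-\kappa)=(\kappa+1)p$. The only (cosmetic) difference is where the extra generality needed for the second use of the inductive hypothesis is stored: you strengthen the statement to all $k$ via $(\star_k)$, whereas the paper exploits that (i) holds for every $t$ and applies it at the shifted index $t+zs+w$, then recombines via \eqref{sum max min}.
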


\begin{proof} 
(\roii) follows from (\roi) by applying the algebra anti-automorphism $\dag$ defined in \eqref{Def dag}. 
Then, we prove only (\roi) by the induction on $p$. 

If $p=0$, (\roi) is clear. 
If $p >0$, by the definition \eqref{Def cJ p}, we have 
\begin{align*}
\cJ_s^{\lan p \ran} \cX_t^+ 
&= \frac{1}{p} \sum_{z=1}^p (-1)^{z-1} \Big( \sum_{w=0}^z \begin{pmatrix} z \\ w \end{pmatrix} (-Q)^w \cJ_{zs +w} \Big) 
	\cJ_s^{\lan p-z \ran} \cX_t^+. 
\end{align*}
By the assumption of the induction, we have 
\begin{align*}
\cJ_s^{\lan p \ran} \cX_t^+ 
&= \frac{1}{p} \sum_{z=1}^p (-1)^{z-1} \Big( \sum_{w=0}^z \begin{pmatrix} z \\ w \end{pmatrix} (-Q)^w \cJ_{zs +w} \Big) 
	\\ & \quad \times 
	\Big(  \cX_t^+ \cJ_s^{\lan p-z \ran} + \sum_{k=1}^{p-z} (-1)^{k+1} (k+1) \cJ_s^{\lan p-z-k \ran} \cX_t^{+((k);s)} \Big) 
\\
&= \frac{1}{p} \sum_{z=1}^p (-1)^{z-1} \sum_{w=0}^z \begin{pmatrix} z \\ w \end{pmatrix} (-Q)^w 
	(\cX_t^+ \cJ_{zs +w} + 2 \cX_{t + zs +w}^+) \cJ_s^{\lan p-z \ran} 
	\\ & \quad 
	+ \frac{1}{p} \sum_{z=1}^p \sum_{w=0}^z \sum_{k=1}^{p-z} 
		(-1)^{z+k} \begin{pmatrix} z \\ w \end{pmatrix} (k+1) (-Q)^w \cJ_{zs +w} \cJ_s^{\lan p-z-k \ran} \cX_t^{+((k);s)}. 
\end{align*}
Applying the assumption of the induction again, we have 
\begin{align}
\begin{split} 
\label{cJsp cXt}
\cJ_s^{\lan p \ran} \cX_t^+ 
&= \cX_t^+ \frac{1}{p} \sum_{z=1}^p (-1)^{z-1} \sum_{w=0}^z \begin{pmatrix} z \\ w \end{pmatrix} (-Q)^w 
		\cJ_{zs +w} \cJ_s^{\lan p-z \ran} 
	\\ &\quad 
	+ 2 \frac{1}{p} \sum_{z=1}^p (-1)^{z-1} \sum_{w=0}^z \begin{pmatrix} z \\ w \end{pmatrix} (-Q)^w 
		\cJ_s^{\lan p-z \ran} \cX_{t+zs +w}^+
	\\ & \quad 
	- 2 \frac{1}{p} \sum_{z=1}^p (-1)^{z-1} \sum_{w=0}^z \begin{pmatrix} z \\ w \end{pmatrix} (-Q)^w 
		\sum_{k=1}^{p-z} (-1)^{k+1} (k+1) \cJ_s^{\lan p-z-k \ran} \cX_{t+ zs +w}^{+((k);s)} 
	\\ & \quad 
	+ \frac{1}{p} \sum_{z=1}^p \sum_{w=0}^z \sum_{k=1}^{p-z} 
		(-1)^{z+k} \begin{pmatrix} z \\ w \end{pmatrix} (k+1) (-Q)^w \cJ_{zs +w} \cJ_s^{\lan p-z-k \ran} \cX_t^{+((k);s)}. 
\end{split}
\end{align}
By the definition \eqref{Def cJ p}, we have 
\begin{align}
\label{cXt sum = cXt cJsp}
\cX_t^+ \frac{1}{p} \sum_{z=1}^p (-1)^{z-1} \sum_{w=0}^z \begin{pmatrix} z \\ w \end{pmatrix} (-Q)^w 
		\cJ_{zs +w} \cJ_s^{\lan p-z \ran} 
= \cX_t^+ \cJ_s^{\lan p \ran}. 
\end{align}
By the definition \eqref{Def cX ph}, we have 
\begin{align}
\label{sum cJs cXt = sum cJs cXt}
\sum_{z=1}^p (-1)^{z-1} \sum_{w=0}^z \begin{pmatrix} z \\ w \end{pmatrix} (-Q)^w \cJ_s^{\lan p-z \ran} \cX_{t+zs +w}^+
= \sum_{z=1}^p (-1)^{z-1} \cJ_s^{\lan p-z \ran} \cX_t^{+((z);s)}.
\end{align}
Put 
\begin{align*}
(\ast)
&=\sum_{z=1}^p (-1)^{z-1} \sum_{w=0}^z \begin{pmatrix} z \\ w \end{pmatrix} (-Q)^w 
		\sum_{k=1}^{p-z} (-1)^{k+1} (k+1) \cJ_s^{\lan p-z-k \ran} \cX_{t+ zs +w}^{+((k);s)}. 
\end{align*}
By the definition \eqref{Def cX ph}, we also have 
\begin{align*}
(\ast)
&= \sum_{z=1}^p (-1)^{z-1} \sum_{w=0}^z \begin{pmatrix} z \\ w \end{pmatrix} (-Q)^w 
		\sum_{k=1}^{p-z} (-1)^{k+1} (k+1) \cJ_s^{\lan p-z-k \ran} 
		\\ & \quad \times 
		\Big( \sum_{l=0}^k \begin{pmatrix} k \\ l \end{pmatrix} (-Q)^l \cX_{(t+zs +w) + k s +l}^+ \Big) 
\\
&= \sum_{z=1}^p \sum_{k=1}^{p-z} (-1)^{z+k} (k+1) \cJ_s^{\lan p - (z+k) \ran} 
	\sum_{w=0}^z \sum_{l=0}^k \begin{pmatrix} z \\ w \end{pmatrix} \begin{pmatrix} k \\ l \end{pmatrix} (-Q)^{w+l} 
		\cX_{t + (z+k)s + (w+l)}^+ 
\end{align*}
Put $z'=z+k$ and $w'=w+l$, we have 
\begin{align*}
(\ast) 
&= \sum_{z'=2}^p \sum_{k=1}^{z'-1} (-1)^{z'} (k+1) \cJ_s^{\lan p - z' \ran} 
	\sum_{w'=0}^{z'} \sum_{ l= \max \{ 0, w'-(z'-k) \}}^{\min\{k, w' \}} 
	\begin{pmatrix} z' - k \\ w' -l \end{pmatrix} \begin{pmatrix} k \\ l \end{pmatrix} (-Q)^{w'} \cX_{t+z' s + w'}^+.
\end{align*}
By the induction on $k$, 
we can show that 
\begin{align}
\label{sum max min} 
\sum_{ l= \max \{ 0, w'-(z'-k) \}}^{\min\{k, w' \}} 
	\begin{pmatrix} z' - k \\ w' -l \end{pmatrix} \begin{pmatrix} k \\ l \end{pmatrix} 
= \begin{pmatrix} z' \\ w' \end{pmatrix}.
\end{align}
Then, we have 
\begin{align*}
(\ast) = \sum_{z'=2}^p  (-1)^{z'} \Big( \sum_{k=1}^{z'-1} (k+1) \Big) \cJ_s^{\lan p - z' \ran} 
	\sum_{w'=0}^{z'} \begin{pmatrix} z' \\ w' \end{pmatrix} (-Q)^{w'} \cX_{t+z' s + w'}^+, 
\end{align*}
and by the definition of \eqref{Def cX ph}, we have 
\begin{align}
\label{ast = sum cJs cXt}
(\ast) = \sum_{z'=2}^p (-1)^{z'} \frac{(z'-1)(z'+2)}{2} \cJ_s^{\lan p-z' \ran} \cX_t^{+((z');s)}.
\end{align}
By the definition \eqref{Def cJ p}, we have 
\begin{align}
\label{sumsumsum = sum cJs cXt}
\begin{split}
&\sum_{z=1}^p \sum_{w=0}^z \sum_{k=1}^{p-z} 
		(-1)^{z+k} \begin{pmatrix} z \\ w \end{pmatrix} (k+1) (-Q)^w \cJ_{zs +w} \cJ_s^{\lan p-z-k \ran} \cX_t^{+((k);s)}
\\
&= \sum_{k=1}^{p-1} \sum_{z=1}^{p-k} \sum_{w=0}^z 
	(-1)^{z+k} \begin{pmatrix} z \\ w \end{pmatrix} (k+1) (-Q)^w \cJ_{zs +w} \cJ_s^{\lan p-z-k \ran} \cX_t^{+((k);s)}
\\
&= \sum_{k=1}^{p-1} (-1)^{k+1} (k+1) (p-k) 
	\Big( \frac{1}{p-k} \sum_{z=1}^{p-k} (-1)^{z-1} \sum_{w=0}^z \begin{pmatrix} z \\ w \end{pmatrix} 
		(-Q)^w \cJ_{zs + w} \cJ_s^{\lan (p-k) -z \ran} \Big) \cX_t^{+((k);s)}
\\
&= \sum_{k=1}^{p-1} (-1)^{k+1} (k+1)(p-k) \cJ_s^{\lan p - k \ran} \cX_t^{+((k);s)}.
\end{split}
\end{align}
Combining \eqref{cJsp cXt} with \eqref{cXt sum = cXt cJsp}, \eqref{sum cJs cXt = sum cJs cXt}, \eqref{ast = sum cJs cXt} 
and \eqref{sumsumsum = sum cJs cXt}, we have 
\begin{align*}
\cJ_s^{\lan p \ran} \cX_t^+ 
&= \cX_t^+ \cJ_s^{\lan p \ran} 
	+ \frac{1}{p} \sum_{z=1}^p (-1)^{z+1} \big( 2 + (z-1)(z+2) + (z+1)(p-z)\big)\cJ_s^{\lan p-z \ran} \cX_t^{+((z);s)}
\\
&= \cX_t^+ \cJ_s^{\lan p \ran}  
	+ \sum_{z=1}^p (-1)^{z+1} (z+1) \cJ_s^{\lan p-z \ran} \cX_t^{+((z);s)}. 
\end{align*}
\end{proof}


\begin{lem} 
\label{Lemma cXt+ cXs-(p)h}
For $s,t,h \in \ZZ_{\geq 0}$ and $p \in \ZZ_{>0}$, we have 
\begin{align*}
[\cX_t^+, \cX_s^{-((p);h)}] = \sum_{w=0}^p \begin{pmatrix} p \\ w \end{pmatrix} (-Q)^w \cJ_{s+t+ p h +w}^{\lan 1 \ran}. 
\end{align*}
\end{lem}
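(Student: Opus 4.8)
The plan is to reduce the entire computation to a single application of relation (L3), after first recasting (L3) in terms of the element $\cJ^{\lan 1 \ran}$. Setting $p=1$ in the defining recursion \eqref{Def cJ p} gives $\cJ_r^{\lan 1 \ran} = \cJ_r + (-Q)\cJ_{r+1} = \cJ_r - Q\cJ_{r+1}$ for every $r \geq 0$. Hence relation (L3), which reads $[\cX_a^+, \cX_b^-] = \cJ_{a+b} - Q\cJ_{a+b+1}$, can be restated compactly as $[\cX_a^+, \cX_b^-] = \cJ_{a+b}^{\lan 1 \ran}$. This identification is the one conceptual point of the argument, and it is immediate from \eqref{Def cJ p}.

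With this in hand, I would expand the left-hand side by the definition \eqref{Def cX ph} of $\cX_s^{-((p);h)}$ together with the bilinearity of the bracket, giving
\[
[\cX_t^+, \cX_s^{-((p);h)}] = \sum_{w=0}^p \binom{p}{w} (-Q)^w \, [\cX_t^+, \cX_{s+ph+w}^-].
\]
Applying the rewritten form of (L3) to each summand, with $a=t$ and $b=s+ph+w$, yields $[\cX_t^+, \cX_{s+ph+w}^-] = \cJ_{s+t+ph+w}^{\lan 1 \ran}$. Substituting this back produces exactly the claimed right-hand side
\[
\sum_{w=0}^p \binom{p}{w} (-Q)^w \cJ_{s+t+ph+w}^{\lan 1 \ran},
\]
which completes the proof.

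Unlike the preceding Lemma \ref{Lemma cJsp cXt}, this statement requires no induction on $p$ and no manipulation of nested binomial sums such as \eqref{sum max min}: the summation over $w$ is carried along untouched from the definition of $\cX_s^{-((p);h)}$, and each bracket is evaluated termwise. Consequently there is no substantial obstacle to anticipate; the only step worth isolating explicitly is the reformulation of (L3) via $\cJ^{\lan 1 \ran}$, after which the result is a direct linear expansion.
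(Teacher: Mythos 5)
Your proposal is correct and follows essentially the same route as the paper: expand $\cX_s^{-((p);h)}$ by its definition \eqref{Def cX ph}, apply (L3) termwise, and recognize $\cJ_r + (-Q)\cJ_{r+1}$ as $\cJ_r^{\lan 1 \ran}$ from \eqref{Def cJ p}. The only cosmetic difference is that you reformulate (L3) via $\cJ^{\lan 1 \ran}$ up front, whereas the paper performs that identification at the end of the computation.
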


\begin{proof} 
By the definitions \eqref{Def cX ph}, \eqref{Def cJ p} and the defining relation (L3), 
we have 
\begin{align*}
\cX_t^+ \cX_s^{-((p);h)} 
&= \sum_{w=0}^p \begin{pmatrix} p \\ w \end{pmatrix} (-Q)^w \cX_t^+ \cX_{s+ p h +w}^{-} 
\\
&= \sum_{w=0}^p \begin{pmatrix} p \\ w \end{pmatrix} (-Q)^w 
	\big( \cX_{s + p h +w}^- \cX_t^+ + \cJ_{s+t+ph+w} + (- Q) \cJ_{s+t+ph+w+1} \big)
\\
&= \cX_s^{-((p);h)} \cX_t^+ + \sum_{w=0}^p \begin{pmatrix} p \\ w \end{pmatrix} (-Q)^w \cJ_{s+t+ph +w}^{\lan 1 \ran}. 
\end{align*} 
\end{proof}


\begin{lem} 
\label{Lemma cJs1 cXtph}
For $s,t,h \in \ZZ_{\geq 0}$ and $p \in \ZZ_{>0}$, 
we have the following relations. 
\begin{enumerate} 
\item 
	$\dis [\cJ_s^{\lan 1 \ran}, \cX_t^{+((p);h)}] = 2 \cX_{s+t-h}^{+((p+1);h)}$. 

\item 
	$\dis [\cJ_s^{\lan 1 \ran}, \cX_t^{-((p);h)}] = - 2 \cX_{s+t-h}^{-((p+1);h)}$. 

\end{enumerate} 
\end{lem}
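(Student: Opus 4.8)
The plan is to prove (\roi) directly and then obtain (\roii) from it by the symmetry $\dag$ of \eqref{Def dag}. Since $\dag$ fixes every $\cJ_t$, it fixes $\cJ_s^{\lan 1 \ran} = \cJ_s + (-Q)\cJ_{s+1}$, and by the remark following \eqref{Def cX ph} we have $\dag(\cX_t^{+((p);h)}) = \cX_t^{-((p);h)}$ and $\dag(\cX_{s+t-h}^{+((p+1);h)}) = \cX_{s+t-h}^{-((p+1);h)}$. Because $\dag$ is an anti-automorphism, $\dag([\cJ_s^{\lan 1 \ran}, \cX_t^{+((p);h)}]) = -[\cJ_s^{\lan 1 \ran}, \cX_t^{-((p);h)}]$, so applying $\dag$ to the identity (\roi) produces (\roii) with the sign flip recorded in the statement. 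Thus everything reduces to (\roi).

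For (\roi) the first step is to record the bracket with a single generator. Using $\cJ_s^{\lan 1 \ran} = \cJ_s + (-Q)\cJ_{s+1}$ together with the rank $1$ relation (L2), one finds for any $u \geq 0$ that
\[
[\cJ_s^{\lan 1 \ran}, \cX_u^+] = 2\cX_{s+u}^+ + 2(-Q)\cX_{s+u+1}^+ = 2\cX_{s+u-h}^{+((1);h)},
\]
the last equality being just the definition \eqref{Def cX ph} (note the value is independent of $h$). I would then expand $\cX_t^{+((p);h)} = \sum_{w=0}^p \binom{p}{w}(-Q)^w \cX_{t+ph+w}^+$ and apply this elementary bracket termwise, which gives
\[
[\cJ_s^{\lan 1 \ran}, \cX_t^{+((p);h)}] = 2\sum_{w=0}^p \binom{p}{w}(-Q)^w \cX_{s+t+ph+w}^+ + 2\sum_{w=0}^p \binom{p}{w}(-Q)^{w+1}\cX_{s+t+ph+w+1}^+.
\]

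The final step is to reindex the second sum by $w \mapsto w-1$ and combine it with the first via Pascal's rule $\binom{p}{w} + \binom{p}{w-1} = \binom{p+1}{w}$, the boundary terms $w=0$ and $w=p+1$ contributing $\binom{p+1}{0} = \binom{p+1}{p+1} = 1$. This collapses the two sums into $2\sum_{w=0}^{p+1}\binom{p+1}{w}(-Q)^w \cX_{s+t+ph+w}^+$, which is exactly $2\cX_{s+t-h}^{+((p+1);h)}$ once one writes the base subscript as $s+t-h$ so that the shift $ph \mapsto (p+1)h$ is absorbed. There is no genuinely hard step here: the argument is pure bookkeeping, and the only points requiring care are the index alignment of the two sums so that Pascal's identity applies cleanly, and the harmless observation that although $s+t-h$ may be negative, every generator $\cX^+$ actually appearing carries the nonnegative index $s+t+ph+w$, so the expression is well defined.
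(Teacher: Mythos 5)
Your proof is correct and follows essentially the same route as the paper: reduce (ii) to (i) via the anti-automorphism $\dag$, expand $\cX_t^{+((p);h)}$ termwise, bracket each generator with $\cJ_s^{\lan 1 \ran}$, and recombine with Pascal's rule. The only cosmetic difference is that you compute the elementary bracket $[\cJ_s^{\lan 1 \ran},\cX_u^+]$ directly from (L2), whereas the paper cites Lemma \ref{Lemma cJsp cXt} with $p=1$, which amounts to the same calculation.
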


\begin{proof} 
(\roii) follows from (\roi) by applying the algebra anti-automorphism $\dag$ defined in \eqref{Def dag}. 
Then, we prove (\roi). 

By the definition \eqref{Def cX ph}, we have 
\begin{align*}
\cJ_s^{\lan 1 \ran} \cX_t^{+((p);h)} 
= \sum_{w=0}^p \begin{pmatrix} p \\ w \end{pmatrix} (-Q)^w  \cJ_s^{\lan 1 \ran} \cX_{t+ p h +w}^+. 
\end{align*}
Applying Lemma \ref{Lemma cJsp cXt} (\roi), we have 
\begin{align*}
\cJ_s^{\lan 1 \ran} \cX_t^{+((p);h)} 
&= \sum_{w=0}^p \begin{pmatrix} p \\ w \end{pmatrix} (-Q)^w 
	 \big( \cX_{t+ph +w}^+ \cJ_s^{\lan 1 \ran} + 2  \cX_{t+ p h +w}^{+((1);s)} \big) 
\end{align*}
Then, by the definition \eqref{Def cX ph} again, we have 
\begin{align*}
\cJ_s^{\lan 1 \ran} \cX_t^{+((p);h)}
&=\cX_t^{+((p);h)} \cJ_s^{\lan 1\ran} + 2 \sum_{w=0}^p \begin{pmatrix} p \\ w \end{pmatrix} (-Q)^w 
	\big( \cX_{s+t+ph+w}^+ +(-Q) \cX_{s+t+ph+w+1}^+ \big). 
\end{align*}
On the other hand, we have 
\begin{align*}
&\sum_{w=0}^p \begin{pmatrix} p \\ w \end{pmatrix} (-Q)^w \big( \cX_{s+t+ph+w}^+ +(-Q) \cX_{s+t+ph+w+1}^+ \big) 
\\
&= \cX_{s+t+ph}^+ 
	+ \sum_{w=1}^p \big\{ \begin{pmatrix} p \\ w \end{pmatrix} + \begin{pmatrix} p \\ w-1 \end{pmatrix} \big\} 
		(-Q)^w \cX_{s+t+ph+w}^+ 
	+ (-Q)^{p+1} \cX_{s+t+ph+p+1}^+ 
\\
&= \sum_{w=0}^{p+1} \begin{pmatrix} p+1 \\ w \end{pmatrix} (-Q)^w \cX_{s+t-h +(p+1) h+w}^+ 
\\
&= \cX_{s+t-h}^{+(p+1);h}. 
\end{align*}
Thus, we have (\roi). 
\end{proof}

\begin{lem} 
\label{Lemma cXt cXsc}
For $s,t,c \in \ZZ_{\geq 0}$, we have 
\begin{align*} 
[\cX_t^+, \cX_s^{-(c)}] = \cX_s^{-(c-1)} \cJ_{s+t}^{\lan 1 \ran} - \cX_s^{-(c-2)} \cX_s^{-((1);s+t)}. 
\end{align*} 
\end{lem}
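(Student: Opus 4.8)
The plan is to prove the identity by induction on $c$, reducing everything to three elementary commutation relations: $[\cX_t^+, \cX_s^-] = \cJ_{s+t}^{\lan 1 \ran}$, $[\cJ_{s+t}^{\lan 1 \ran}, \cX_s^-] = -2\cX_s^{-((1);s+t)}$, and $[\cX_s^{-((1);s+t)}, \cX_s^-]=0$. The first is immediate from relation (L3) together with the definition \eqref{Def cJ p} of $\cJ_{s+t}^{\lan 1 \ran}$. The second is the $p=1$ case of Lemma \ref{Lemma cJsp cXt} (\roii) after relabelling the indices (taking the index of $\cJ$ to be $s+t$ and that of $\cX^-$ to be $s$). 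The third holds because, by the definition \eqref{Def cX ph}, $\cX_s^{-((1);s+t)}$ is a linear combination of the generators $\cX_{2s+t}^-$ and $\cX_{2s+t+1}^-$, all of which commute with $\cX_s^-$ by (L4). To lighten the notation I would abbreviate $J := \cJ_{s+t}^{\lan 1 \ran}$ and $Z := \cX_s^{-((1);s+t)}$.

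First I would record the auxiliary relation $[J, \cX_s^{-(c-1)}] = -2\cX_s^{-(c-2)} Z$. This follows by expanding $[J, (\cX_s^-)^{c-1}] = \sum_{k=0}^{c-2}(\cX_s^-)^k [J, \cX_s^-](\cX_s^-)^{c-2-k}$, substituting $[J, \cX_s^-] = -2Z$, and then using $[Z, \cX_s^-]=0$ to pull $Z$ to the right and collect the $c-1$ identical terms; dividing by $(c-1)!$ and simplifying the factorials via $\tfrac{(c-1)(\cX_s^-)^{c-2}}{(c-1)!} = \cX_s^{-(c-2)}$ gives the stated form.

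Next comes the induction itself. The base cases $c=0$ and $c=1$ are checked directly, using the convention $\cX_s^{-(b)}=0$ for $b<0$, so that the claim reads $0=0$ when $c=0$ and reduces to $[\cX_t^+,\cX_s^-]=J$ when $c=1$. For the inductive step I would write $\cX_s^{-(c)} = \tfrac{1}{c}\cX_s^-\cX_s^{-(c-1)}$, apply the Leibniz rule $[\cX_t^+, \cX_s^-\cX_s^{-(c-1)}] = [\cX_t^+, \cX_s^-]\cX_s^{-(c-1)} + \cX_s^-[\cX_t^+, \cX_s^{-(c-1)}]$, and insert $[\cX_t^+,\cX_s^-]=J$ together with the induction hypothesis for $[\cX_t^+, \cX_s^{-(c-1)}]$. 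Using the divided-power identity $\cX_s^-\cX_s^{-(d)} = (d+1)\cX_s^{-(d+1)}$ and then commuting $J$ to the right of $\cX_s^{-(c-1)}$ by the auxiliary relation above, the whole expression should collapse to $c\,\cX_s^{-(c-1)} J - c\,\cX_s^{-(c-2)} Z$; dividing by $c$ yields the claim.

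The only genuine obstacle is the coefficient bookkeeping in the inductive step: moving $J$ to the right of $\cX_s^{-(c-1)}$ produces an extra $-2\cX_s^{-(c-2)}Z$, and this must combine with the $-(c-2)\cX_s^{-(c-2)}Z$ arising from the induction hypothesis to give precisely $-c\,\cX_s^{-(c-2)}Z$, i.e. the numerical identity $2+(c-2)=c$. Keeping careful track of the factorial normalizations in $\cX_s^-\cX_s^{-(d)} = (d+1)\cX_s^{-(d+1)}$ is what makes the factor $c$ appear uniformly across both terms, so that the final division by $c$ is clean and no fractional coefficients survive.
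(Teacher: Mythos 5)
Your proof is correct and follows essentially the same route as the paper: induction on $c$, with base cases $c=0,1$, using (L3), the $p=1$ case of Lemma \ref{Lemma cJsp cXt} (\roii), and (L4), and the same coefficient identity $2+(c-2)=c$. The only cosmetic difference is that you peel the factor $\cX_s^-$ off the left and isolate the commutator $[\cJ_{s+t}^{\lan 1 \ran}, \cX_s^{-(c-1)}] = -2\,\cX_s^{-(c-2)}\cX_s^{-((1);s+t)}$ as a separate step, whereas the paper peels it off the right and handles that commutation in-line.
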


\begin{proof} 
We prove the lemma by the induction on $c$. 
If $c=0$, it is clear. 
If $c=1$, it is the defining relation (L3). 
If $c >1$, 
by the assumption of the induction, 
we have 
\begin{align*}
\cX_t^+ \cX_s^{-(c)} 
&= \frac{1}{c} \cX_t^+ \cX_s^{-(c-1)} \cX_s^- 
\\
&= \frac{1}{c} 
	\big( \cX_s^{-(c-1)} \cX_t^+ + \cX_s^{-(c-2)} \cJ_{s+t}^{\lan 1 \ran} - \cX_{s}^{-(c-3)} \cX_s^{-((1);s+t)} \big) 
	\cX_s^-. 
\end{align*}
Then, by the defining relations (L3), (L4) and Lemma \ref{Lemma cJsp cXt} (\roii), we have 
\begin{align*}
\cX_t^+ \cX_s^{-(c)} 
&= \frac{1}{c} \big\{ 
	\cX_s^{-(c-1)} \big( \cX_s^- \cX_t^+ + \cJ_{s+t}^{\lan 1 \ran} \big)
	+ \cX_s^{-(c-2)} \big( \cX_s^- \cJ_{s+t}^{\lan 1 \ran} - 2 \cX_{s}^{-((1); s+t)}\big) 
	\\ & \hspace{3em} 
	- \cX_s^{-(c-3)} \cX_s^- \cX_s^{-((1);s+t)} \big\}  
\\
&= \frac{1}{c} \big\{ c \cX_s^{-(c)} \cX_t^+ + \cX_s^{-(c-1)} \cJ_{s+t}^{\lan 1 \ran} + (c-1) \cX_s^{-(c-1)} \cJ_{s+t}^{\lan 1 \ran}
	\\ & \hspace{2em}  
	- 2 \cX_s^{-(c-2)} \cX_s^{-((1);s+t)} - (c-2) \cX_s^{-(c-2)} \cX_s^{-((1);s+t)} \big\} 
\\
&= \cX_s^{-(c)} \cX_t^+ + \cX_s^{-(c-1)} \cJ_{s+t}^{\lan 1 \ran} - \cX_s^{-(c-2)} \cX_s^{-((1);s+t)}. 
\end{align*}
\end{proof}

\para 
A partition $\la=(\la_1, \la_2, \dots)$ is a non-increasing sequence of non-negative integers 
which has only finitely many non-zero terms. 
The size of a partition $\la$ is the sum of all terms of $\la$, and we denote it by $|\la|$. 
Namely, we have $|\la|=\sum_{i \geq 1} \la_i$. 
If $|\la|=n$, we say that $\la$ is a partition of $n$, and we denote it  by $\la \vdash n$. 
The length of $\la$ is the maximal $i$ such that $\la_i \not=0$, 
and we denote the length of $\la$ by $\ell(\la)$. 
For a partition $\la=(\la_1,\la_2,\dots)$, 
let $m_j (\la)$ ($j \in \ZZ_{>0}$) be the multiplicity of $j $ in $\la$. 
Then, for a partition $\la$ and $t,h \in \ZZ_{\geq 0}$, 
we define an element $\cX_t^{+(\la;h)}$ (resp. $\cX_t^{-(\la;h)}$) of $ U(\Fsl_2^{\lan Q \ran}[x])$ by 
\begin{align}
\label{Def cXlah}
\cX_t^{\pm (\la;h)} = \prod_{j \geq 1} \frac{(\cX_t^{\pm ((j); h)})^{m_j(\la)}}{m_j(\la) !}, 
\end{align}
where we note the defining relation (L4).
Clearly, we have $\dag(\cX_t^{+(\la; h)}) = \cX_t^{-(\la;h)}$. 
For examples, we have 
\begin{align*}
& \cX_t^{\pm ((0);h)} =1, 
	\quad 
	\cX_t^{\pm ((1); h)} = \cX_t^{\pm ((1); h)}, 
\\
& \cX_t^{\pm ((2);h)} = \cX_t^{\pm((2);h)}, 
	\quad 
	\cX_t^{\pm((1,1);h)} = \frac{(\cX_t^{\pm ((1);h)})^2}{2 !}, 
\\
&  \cX_t^{\pm ((3);h)} = \cX_t^{\pm ((3);h)}, 
	\quad 
	\cX_t^{\pm ((2,1);h)} = \cX_t^{\pm ((2);h)} \cX_t^{\pm ((1);h)}, 
	\quad 
	\cX_t^{\pm ((1,1,1);h)} = \frac{(\cX_t^{\pm (1);h)})^3}{3!}, 
\\
& \cX_t^{\pm ((3,3,2,2,2,1,1);h)} = \frac{(\cX_t^{\pm ( (3) ;h)})^2}{2 !} \frac{(\cX_t^{\pm ((2);h)})^3}{ 3 !} 
	\frac{(\cX_t^{+((1);h)})^2}{2 !}. 
\end{align*}

For $t,h,k,b,p \in \ZZ_{\geq 0}$, 
we define an element 
$\cX_t^{+(b;p | k;h)}$ (resp. $\cX_t^{-(b;p | k;h)}$)  of $U(\Fsl_2^{\lan Q \ran}[x])$ by 
\begin{align}
\label{Def cXbpkh}
\cX_t^{\pm (b;p|k;h)} = \sum_{\la \vdash k} \cX_t^{\pm (\la;h)} \cX_t^{\pm (b-p - \ell(\la))}.
\end{align}
Note the defining relation (L4), 
we see that $\dag (\cX_t^{+(b;p|k;h)}) = \cX_t^{-(b;p | k;h)}$. 
For examples, we have 
\begin{align*}
& \cX_t^{\pm (b;p|0;h)} = \cX_t^{\pm (b-p)}, 
	\quad 
	\cX_t^{\pm (b;p|1;h)} = \cX_t^{\pm ((1);h)} \cX_t^{\pm (b-p-1)}, 
\\ & 
	\cX_t^{\pm (b;p|2;h)} = \cX_t^{\pm ((2);h)} \cX_t^{\pm (b-p-1)} + \cX_t^{\pm ((1,1);h)} \cX_t^{\pm (b-p-2)}, 
\\ & 
	\cX_t^{\pm (b;p | 3 ;h)} = \cX_t^{\pm((3);h)} \cX_t^{\pm (b-p-1)} + \cX_t^{\pm ((2,1);h)} \cX_t^{\pm (b-p-2)} 
		+ \cX_t^{\pm ((1,1,1);h)} \cX_t^{\pm (b-p-3)}.  
\end{align*}

For the element $\cX_t^{\pm (b;p|k;h)} \in U(\Fsl_2^{\lan Q \ran}[x])$, 
we prepare the following technical formulas. 

\begin{lem}
\label{Lemma cXtbpkh}
For $t,h,k,b,p \in \ZZ_{\geq 0}$, we have the following equations for the element 
$\cX_t^{\pm (b;p|k;h)}$ of $U(\Fsl_2^{\lan Q \ran}[x])$. 
\begin{enumerate} 
\item 
If $b-p <0$, we have $\cX_t^{\pm (b;p|k;h)} =0$. 

\item 
If $k=0$, we have $\cX_t^{\pm (b;p|0;h)}= \cX_t^{\pm (b-p)}$. 
\\
If $k=1$, we have $\cX_t^{\pm (b;p|1;h)} = \cX_t^{\pm ((1);h)} \cX_t^{\pm (b-p-1)}$. 

\item 
If $p=b$, we have 
$\cX_t^{\pm (b;b | k;h)} = \begin{cases} 1 & \text{ if } k=0, \\ 0 & \text{ if } k \not=0. \end{cases}$

\item 
If $b,p >0$, we have 
$\cX_t^{\pm (b;p|k;h)}=\cX_t^{\pm (b-1; p-1|k;h)}$. 

\item 
If $b,k >0$, we have 
\begin{align*}
\cX_t^{\pm (b;p|k;h)} = \frac{1}{k} \sum_{z=1}^k z \cX_t^{\pm ((z);h)} \cX_t^{\pm (b-1;p|k-z ; h)}.
\end{align*}

\item 
If $b>0$, we have 
\begin{align*}
(b-p+k) \cX_t^{\pm (b;p|k;h)} = \cX_t^{\pm} \cX_t^{\pm (b-1 ; p | k ;h)} 
	+ \sum_{z=1}^k (z+1) \cX_t^{\pm ((z);h)} \cX_t^{\pm (b-1 ; p | k-z;h)}. 
\end{align*}
\end{enumerate}
\end{lem}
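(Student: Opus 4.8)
The plan is to reduce the two signs to a single one and then to dispatch the six statements by expanding the definition \eqref{Def cXbpkh} and comparing coefficients partition-by-partition. First I note that relation (L4) forces all the $\cX_s^+$ ($s\geq 0$) to commute with one another, so they generate a commutative subalgebra, and likewise for the $\cX_s^-$. Hence each of $\cX_t^{\pm(\la;h)}$, $\cX_t^{\pm((z);h)}$ and $\cX_t^{\pm(b;p|k;h)}$ lies in a commutative subalgebra, and the order of the factors in (i)--(vi) is immaterial. Applying the anti-automorphism $\dag$ of \eqref{Def dag}, which satisfies $\dag(\cX_t^{+((z);h)})=\cX_t^{-((z);h)}$ and $\dag(\cX_t^{+(b;p|k;h)})=\cX_t^{-(b;p|k;h)}$, then carries each $+$ identity to the corresponding $-$ identity (the reversal of order caused by $\dag$ being harmless by commutativity). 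So it suffices to prove the $+$ versions.

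Statements (i)--(iv) follow directly from \eqref{Def cXbpkh} together with the conventions $\cX_t^{+(c)}=0$ for $c<0$ and $\cX_t^{+(0)}=1$. For (i) every summand carries the factor $\cX_t^{+(b-p-\ell(\la))}$ with $b-p-\ell(\la)<0$, hence vanishes. For (ii) and (iii) one reads off the relevant partitions ($\la=\emptyset$ when $k=0$, $\la=(1)$ when $k=1$; and when $p=b$ only the term with $\ell(\la)=0$ survives, which forces $k=0$). For (iv) both sides are literally $\sum_{\la\vdash k}\cX_t^{+(\la;h)}\cX_t^{+(b-p-\ell(\la))}$, since the divided-power exponent depends only on $b-p$.

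The substance is in (v) and (vi), where the key tool is an ``add a part'' computation. Writing $P_z=\cX_t^{+((z);h)}$ and recalling $\cX_t^{+(\la;h)}=\prod_{j\geq 1}P_j^{m_j(\la)}/m_j(\la)!$, I will show that for a partition $\mu\vdash k-z$ and $\la=\mu\cup\{z\}$ one has $P_z\,\cX_t^{+(\mu;h)}=m_z(\la)\,\cX_t^{+(\la;h)}$, while $\ell(\mu)=\ell(\la)-1$ makes the divided-power factors agree: $b-1-p-\ell(\mu)=b-p-\ell(\la)$. Summing the right-hand side of (v) over $z$ and over $\mu\vdash k-z$ reorganizes it as $\frac1k\sum_{\la\vdash k}\big(\sum_z z\,m_z(\la)\big)\cX_t^{+(\la;h)}\cX_t^{+(b-p-\ell(\la))}$; since $\sum_z z\,m_z(\la)=|\la|=k$, this is exactly $\cX_t^{+(b;p|k;h)}$, proving (v).

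For (vi) I will treat the two contributions on the right separately. The elementary identity $\cX_t^+\,\cX_t^{+(c)}=(c+1)\cX_t^{+(c+1)}$ turns the first term into $\sum_{\la\vdash k}(b-p-\ell(\la))\,\cX_t^{+(\la;h)}\cX_t^{+(b-p-\ell(\la))}$, while the same add-a-part computation rewrites the sum as $\sum_{\la\vdash k}\big(\sum_z (z+1)m_z(\la)\big)\cX_t^{+(\la;h)}\cX_t^{+(b-p-\ell(\la))}$ with $\sum_z(z+1)m_z(\la)=|\la|+\ell(\la)=k+\ell(\la)$. Adding the two, the $\ell(\la)$-terms cancel and the common coefficient becomes $b-p+k$, giving (vi). The only delicate point is the bookkeeping of multiplicities and lengths under adding a part; once the two elementary identities $\sum_z z\,m_z(\la)=|\la|$ and $\sum_z m_z(\la)=\ell(\la)$ are in hand, both (v) and (vi) collapse, and I expect this coefficient-matching to be the main (though routine) obstacle.
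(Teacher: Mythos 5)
Your proposal is correct and follows essentially the same route as the paper: (i)--(iv) are read off from the definition, and (v)--(vi) are proved by the same partition-multiplicity bookkeeping (pulling a part $z$ out of $\la\vdash k$ and using $\sum_z z\,m_z(\la)=k$, $\sum_z m_z(\la)=\ell(\la)$, with $\cX_t^{\pm}\cX_t^{\pm(c)}=(c+1)\cX_t^{\pm(c+1)}$ for the divided powers). The only cosmetic difference is that in (vi) you expand the right-hand side and match coefficients, whereas the paper decomposes $b-p+k=k+\ell(\la)+(b-p-\ell(\la))$ on the left and invokes (v); these are the same computation read in opposite directions.
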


\begin{proof} 
(\roi), (\roii), (\roiii) and (\roiv) are clear from definitions. 

We prove (\rov). 
Note that $\sum_{z \geq 1} z m_z (\la) =k$ for a partition $\la$ of $k$. 
Then, by the definition \eqref{Def cXbpkh}, 
we have 
\begin{align*}
\cX_t^{\pm (b;p|k;h)} 
= \sum_{\la \vdash k} \cX_t^{\pm (\la ; h)} \cX_t^{\pm (b-p-\ell(\la))} 
= \frac{1}{k} \sum_{\la \vdash k} \big( \sum_{z \geq 1} z m_z (\la) \big) \cX_t^{\pm (\la;h)} \cX_t^{\pm (b-p- \ell(\la))}.  
\end{align*}
On the other hand, by the definition \eqref{Def cXlah}, 
we have 
\begin{align*}
\cX_t^{\pm (\la ; h)} 
= \prod_{ j \geq 1} \frac{(\cX_t^{\pm ((j) ; h)})^{m_j (\la)}}{m_j(\la) !} 
= \frac{1}{m_z(\la)} \cX_t^{\pm ((z);h)} \frac{(\cX_t^{\pm ((z);h)})^{m_z(\la) -1}}{(m_z (\la) -1)!} 
 \prod_{j \geq 1\atop  j \not=z} \frac{(\cX_t^{\pm ((j) ; h)})^{m_j (\la)}}{m_j(\la) !}
\end{align*}
for each $z$ such that $m_z(\la) \not=0$. 
Thus, we have 
\begin{align*}
\cX_t^{\pm (b;p|k;h)} 
&= \frac{1}{k} \sum_{\la \vdash k} \sum_{z \geq 1 \atop m_z (\la) \not=0}   
	z \cX_t^{\pm ((z);h)} \frac{(\cX_t^{\pm ((z);h)})^{m_z(\la) -1}}{(m_z (\la) -1)!} 
		 \prod_{j \geq 1\atop  j \not=z} \frac{(\cX_t^{\pm ((j) ; h)})^{m_j (\la)}}{m_j(\la) !}
	\cX_t^{\pm (b-p- \ell(\la))}
\\
&= \frac{1}{k} \sum_{z =1}^k z \cX_t^{\pm ((z);h)} \sum_{\la \vdash k \atop m_z (\la) \not=0} 
	  \frac{(\cX_t^{\pm ((z);h)})^{m_z(\la) -1}}{(m_z (\la) -1)!} 
		 \prod_{j \geq 1\atop  j \not=z} \frac{(\cX_t^{\pm ((j) ; h)})^{m_j (\la)}}{m_j(\la) !}
	\cX_t^{\pm (b-p- \ell(\la))}
\\
&= \frac{1}{k} \sum_{z =1}^k z \cX_t^{\pm ((z);h)}  
	\sum_{\mu \vdash k -z} \prod_{j \geq 1} \frac{(\cX_t^{\pm ((j);h)})^{m_j (\mu)}}{m_j(\mu) !} 
	\cX_t^{\pm (b-p- (\ell(\mu) +1))}
\\
&= \frac{1}{k} \sum_{z =1}^k z \cX_t^{\pm ((z);h)}  
	\sum_{\mu \vdash k -z} \cX_t^{\pm (\mu;h)} \cX_t^{ \pm ( (b -1) -p - \ell(\mu))} 
\\
&= \frac{1}{k} \sum_{z =1}^k z \cX_t^{\pm ((z);h)} \cX_t^{\pm (b-1 ; p | k-z ;h)}. 
\end{align*}

We prove (\rovi). 
By the definition \eqref{Def cXbpkh}, 
we have 
\begin{align*}
&(b-p+k) \cX_t^{\pm (b;p|k;h)} 
\\
&= k \cX_t^{\pm (b;p|k;h)}  
	+  \sum_{\la \vdash k} \ell (\la) \cX_t^{\pm (\la;h)} \cX_t^{\pm (b-p- \ell(\la))} 
	+  \sum_{\la \vdash k}(b-p- \ell (\la)) \cX_t^{\pm (\la;h)} \cX_t^{\pm (b-p- \ell(\la))}. 
\end{align*}
Note that $\ell(\la) = \sum_{z \geq 1} m_z (\la)$, 
$(b-p-\ell(\la)) \cX_t^{\pm (b-p- \ell(\la))} = \cX_t^{\pm} \cX_t^{\pm (b-p- \ell(\la)-1)}$ 
and the defining relation (L4), 
we have 
\begin{align*}
&(b-p+k) \cX_t^{\pm (b;p|k;h)} 
\\
&= k \cX_t^{\pm (b;p|k;h)}  
	+  \sum_{\la \vdash k} \big( \sum_{z \geq 1} m_z (\la) \big) \cX_t^{\pm (\la;h)} \cX_t^{\pm (b-p- \ell(\la))} 
	+  \cX_t^{\pm} \sum_{\la \vdash k} \cX_t^{\pm (\la;h)} \cX_t^{\pm (b -1 -p- \ell(\la))}. 
\end{align*}
In a similar argument as in the proof of (\rov), we have 
\begin{align*}
&(b-p+k) \cX_t^{\pm (b;p|k;h)} 
\\
&= k \cX_t^{\pm (b;p|k;h)}  
	+ \sum_{z=1}^k \cX_t^{\pm ((z);h)} \cX_t^{\pm (b-1 ; p | k-z;h)} 
	+  \cX_t^{\pm} \sum_{\la \vdash k} \cX_t^{\pm (\la;h)} \cX_t^{ \pm ((b -1) -p- \ell(\la))}. 
\end{align*} 
Then, by (\rov) and the definition \eqref{Def cXbpkh}, we have 
\begin{align*}
&(b-p+k) \cX_t^{\pm (b;p|k;h)} 
\\
&=\sum_{z=1}^k z \cX_t^{\pm ((z);h)} \cX_t^{\pm (b-1;p|k-z ;h)} 
	+ \sum_{z=1}^k \cX_t^{\pm ((z);h)} \cX_t^{\pm (b-1 ; p | k-z;h)}
	+ \cX_t^{\pm} \cX_t^{\pm (b-1 ; p | k ;h)} 
\\
&= \cX_t^{\pm} \cX_t^{\pm (b-1 ; p | k ;h)} 
	+ \sum_{z=1}^k (z+1) \cX_t^{\pm ((z);h)} \cX_t^{\pm (b-1 ; p | k-z;h)}. 
\end{align*}
\end{proof}


\begin{lem} 
\label{Lemma cXt+ cXs-cpks+t}
For $s,t,c,p,k \in \ZZ_{\geq 0}$, we have 
\begin{align}
\label{cXt+ cXs-cpks+t}
\begin{split} 
&[\cX_t^+, \cX_s^{-(c;p|k;s+t)}] 
\\
&= 
\sum_{z=0}^k \sum_{w=0}^{k-z} \begin{pmatrix} k-z \\ w \end{pmatrix} (-Q)^w  
		\cX_s^{-(c;p+1|z;s+t)} 	\cJ_{(k-z+1)(s+t) +w}^{\lan 1 \ran} 
	-(k+1) \cX_s^{-(c;p+1|k+1;s+t)}. 
\end{split}
\end{align}
\end{lem}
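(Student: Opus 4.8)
The plan is to prove the identity by induction on $c$, exploiting that $\ad(\cX_t^+) = [\cX_t^+, -\,]$ is a derivation of the associative algebra $U(\Fsl_2^{\lan Q \ran}[x])$. Throughout I abbreviate $h=s+t$, so that the subscript $s+t+zh = (z+1)(s+t)$ produced by Lemma \ref{Lemma cXt+ cXs-(p)h} matches the $\cJ^{\lan 1 \ran}$-subscripts on the right-hand side. The base case $c=0$ is immediate: the left-hand side vanishes because $\cX_s^{-(0;p|k;s+t)}=0$ unless $p=k=0$ (Lemma \ref{Lemma cXtbpkh} (\roi), (\roii)) and equals $1$ in that case, while every summand on the right contains a factor $\cX_s^{-(0;p+1|\bullet;s+t)}$ of negative $\cX^-$-exponent, hence also vanishes by Lemma \ref{Lemma cXtbpkh} (\roi).

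For the inductive step with $c>0$, I would first dispose of the case $p>0$. By Lemma \ref{Lemma cXtbpkh} (\roiv) we have $\cX_s^{-(c;p|k;s+t)} = \cX_s^{-(c-1;p-1|k;s+t)}$, and the same lemma (applied with first index $c$ and second index $p+1$) shows that each factor $\cX_s^{-(c;p+1|\bullet;s+t)}$ on the right equals $\cX_s^{-(c-1;p|\bullet;s+t)}$. Thus the asserted identity for $(c,p)$ coincides term by term with the identity for $(c-1,p-1)$, which holds by the induction hypothesis. This reduces the problem to $p=0$, and simultaneously avoids the coefficient $(c-p+k)$ below ever being zero.

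Assume now $p=0$ and $c>0$, so $c+k>0$. Applying $\ad(\cX_t^+)$ to the recursion of Lemma \ref{Lemma cXtbpkh} (\rovi) (with $p=0$) and expanding by the derivation rule gives
\begin{align*}
(c+k)[\cX_t^+, \cX_s^{-(c;0|k;s+t)}]
&= [\cX_t^+, \cX_s^-]\,\cX_s^{-(c-1;0|k;s+t)} + \cX_s^-\,[\cX_t^+, \cX_s^{-(c-1;0|k;s+t)}] \\
&\quad + \sum_{z=1}^k (z+1)\Big( [\cX_t^+, \cX_s^{-((z);s+t)}]\,\cX_s^{-(c-1;0|k-z;s+t)} \\
&\qquad + \cX_s^{-((z);s+t)}\,[\cX_t^+, \cX_s^{-(c-1;0|k-z;s+t)}] \Big).
\end{align*}
Here $[\cX_t^+, \cX_s^-] = \cJ_{s+t}^{\lan 1 \ran}$ by (L3) and the definition of $\cJ^{\lan 1\ran}$, the brackets $[\cX_t^+, \cX_s^{-((z);s+t)}]$ are evaluated by Lemma \ref{Lemma cXt+ cXs-(p)h}, and the two inner brackets $[\cX_t^+, \cX_s^{-(c-1;0|\bullet;s+t)}]$ are rewritten by the induction hypothesis.

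The remaining work, which I expect to be the main obstacle, is to bring this expression into the normal form of the right-hand side, where every $\cJ^{\lan 1\ran}$ stands to the right of the $\cX^-$-factors. The factors $\cJ_{s+t}^{\lan 1\ran}$ created on the left must be commuted past the building blocks of $\cX_s^{-(c-1;0|\bullet;s+t)}$: each pass across a $\cX_s^{-((j);s+t)}$ raises its index through Lemma \ref{Lemma cJs1 cXtph} (\roii), and each pass across a factor $\cX_s^-$ is governed by Lemma \ref{Lemma cJsp cXt} (\roii). Tracking these moves, the partition weight is incremented from $k$ to $k+1$ while the $\cX^-$-exponent drops by one (this is the shift from second index $0$ to $1$ in $\cX_s^{-(c;1|\bullet;s+t)}$), and the fully commuted residue assembles into the correction term $-(k+1)\cX_s^{-(c;1|k+1;s+t)}$. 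The delicate points will be the repeated Vandermonde-type recombination of the binomial sums $\sum_w \binom{k-z}{w}(-Q)^w$ (as already used in \eqref{sum max min}) and the repackaging via Lemma \ref{Lemma cXtbpkh} (\rov), (\rovi) so that the prefactor $(c+k)$ cancels and the claimed double sum emerges. As a consistency check, the specialization $k=0$ collapses to exactly Lemma \ref{Lemma cXt cXsc}.
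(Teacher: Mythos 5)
Your framing is sound and genuinely different from the paper's: the paper proves the lemma by induction on $k$ (after settling $c=0$), using the recursion of Lemma \ref{Lemma cXtbpkh} (\rov) to peel off one factor $\cX_s^{-((z);s+t)}$ on the right, whereas you induct on $c$, first reducing to $p=0$ via Lemma \ref{Lemma cXtbpkh} (\roiv) (a correct and genuinely simplifying observation that the paper does not exploit) and then applying $\ad(\cX_t^+)$ as a derivation to the recursion (\rovi). Your base case $c=0$, the term-by-term identification of the $(c,p)$ identity with the $(c-1,p-1)$ identity, and the displayed expansion of $(c+k)[\cX_t^+,\cX_s^{-(c;0|k;s+t)}]$ are all correct, and the inputs you cite (relation (L3), Lemma \ref{Lemma cXt+ cXs-(p)h}, the induction hypothesis at $c-1$) are the right ones.

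However, there is a genuine gap: the proof stops exactly where the actual content of the lemma begins. After the derivation-rule expansion, every $\cJ^{\lan 1\ran}$ produced by (L3) and by Lemma \ref{Lemma cXt+ cXs-(p)h} sits to the \emph{left} of the $\cX^-$-factors, while the asserted right-hand side has them on the right; the commutations via Lemma \ref{Lemma cJs1 cXtph} (\roii) and Lemma \ref{Lemma cJsp cXt} (\roii) generate a cascade of new terms of the form $\cX_s^{-((j+1);s+t)}$, and it is precisely the recombination of these — the Vandermonde identities on the sums $\sum_w\binom{k-z}{w}(-Q)^w$, the reassembly into $\cX_s^{-(c;1|\bullet;s+t)}$ via Lemma \ref{Lemma cXtbpkh} (\rov)--(\rovi), and the cancellation of the prefactor $(c+k)$ — that constitutes the lemma. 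You assert that "the fully commuted residue assembles into the correction term $-(k+1)\cX_s^{-(c;1|k+1;s+t)}$" without verifying it; in the paper's proof the analogous verification (the computation of $(*1)$ through $(*5)$, including the identity \eqref{sum max min} and the telescoping in \eqref{2 (*4) + (*5)}) occupies the bulk of the argument and is where the specific coefficients $\binom{k-z}{w}$ and $(k+1)$ are actually produced. Until that computation is carried out in your normalization, the claimed formula is assumed rather than proven.
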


\begin{proof} 
If $c=0$, the equation \eqref{cXt+ cXs-cpks+t} follows from Lemma \ref{Lemma cXtbpkh} (\roi) and (\roiii). 
Then, 
we prove \eqref{cXt+ cXs-cpks+t} 
by the induction on $k$ in the case where $c>0$.  

If $k=0$, 
we see that 
\eqref{cXt+ cXs-cpks+t} 
is just the formula in Lemma \ref{Lemma cXt cXsc} 
by Lemma \ref{Lemma cXtbpkh} (\roii). 

If $k>0$, 
by Lemma \ref{Lemma cXtbpkh} (\rov) and the defining relation (L4),  
we have 
\begin{align*}
\cX_t^+ \cX_s^{- (c;p|k;s+t)} 
=\frac{1}{k} \sum_{z=1}^k z  \cX_t^+ \cX_s^{- (c-1;p | k-z ; s+t)} \cX_s^{-((z);s+t)}.  
\end{align*}
Applying the assumption of the induction, 
we have 
\begin{align*}
&\cX_t^+ \cX_s^{- (c;p|k;s+t)} 
\\
&= \frac{1}{k} \sum_{z=1}^k z \Big\{ 
	\cX_s^{-(c-1;p | k-z ; s+t)} \cX_t^+ 
	\\ & \hspace{3em} 
	+ \sum_{y=0}^{k-z} \cX_s^{-(c-1; p +1 | y ; s+t)} 
		\Big( \sum_{w=0}^{k-z-y} \begin{pmatrix} k-z-y \\ w \end{pmatrix} (-Q)^w \cJ_{(k-z-y+1)(s+t) +w}^{\lan 1 \ran} \Big) 
	\\ & \hspace{3em}
	- (k-z +1) \cX_s^{-(c-1 ; p+1 | k-z+1 ; s+t)} 
	\Big\} \cX_s^{-((z);s+t)}. 
\end{align*} 
Applying Lemma \ref{Lemma cXt+ cXs-(p)h} and Lemma \ref{Lemma cJs1 cXtph} (\roii), 
we have 
\begin{align*}
&\cX_t^+ \cX_s^{-(c;p|k;s+t)} 
\\
&= \frac{1}{k} \sum_{z=1}^k z \cX_s^{-(c-1;p|k-z;s+t)} 
	\Big( \cX_s^{-((z);s+t)} \cX_t^+ 
		+ \sum_{w=0}^z \begin{pmatrix} z \\ w \end{pmatrix} (-Q)^w \cJ_{(z+1) (s+t) +w}^{\lan 1 \ran} \Big)
	\\ & \quad 
	+ \frac{1}{k} \sum_{z=1}^k \sum_{y=0}^{k-z} z \cX_s^{-(c-1 ; p+1 | y ; s+t)} 
		\sum_{w=0}^{k-z-y} \begin{pmatrix} k-z-y \\ w \end{pmatrix} (-Q)^w 
		\\ & \hspace{2em} \times 
		\Big( \cX_s^{-((z) ; s+t)} \cJ_{(k-z-y+1)(s+t) +w}^{\lan 1 \ran} 
			- 2 \cX_{s + (k-z-y)(s+t) +w }^{-( (z+1) ; s+t)} \Big) 
	\\ & \quad 
	- \frac{1}{k} \sum_{z=1}^k z (k-z+1) \cX_s^{-(c-1 ; p+1 | k - z +1 ; s+t)} \cX_s^{-((z) ; s+t)}. 	
\end{align*}
Put 
\begin{align*}
&(*1) = \frac{1}{k} \sum_{z=1}^k z \cX_s^{-(c-1;p|k-z;s+t)} \cX_s^{-((z);s+t)} \cX_t^+, 
\\
&(*2) = \frac{1}{k} \sum_{z=1}^k z \cX_s^{-(c-1 ; p | k-z ; s+t)} 
	\sum_{w=0}^z \begin{pmatrix} z \\ w \end{pmatrix} (-Q)^w \cJ_{(z+1) (s+t) +w}^{\lan 1 \ran} 
\\
&(*3) = \frac{1}{k} \sum_{z=1}^k \sum_{y=0}^{k-z} z \cX_s^{-(c-1 ; p+1 | y ; s+t)} 
	\sum_{w=0}^{k-z-y} \begin{pmatrix} k-z-y \\ w \end{pmatrix} (-Q)^w 
	\cX_s^{-((z) ; s+t)} \cJ_{(k-z-y+1)(s+t) +w}^{\lan 1 \ran} 
\\
&(*4) = \frac{1}{k} \sum_{z=1}^k \sum_{y=0}^{k-z} z \cX_s^{-(c-1 ; p+1 | y ; s+t)} 
	\sum_{w=0}^{k-z-y} \begin{pmatrix} k-z-y \\ w \end{pmatrix} (-Q)^w 
	\cX_{s + (k-z-y)(s+t) +w}^{-((z+1) ; s+t)}, 
\\
&(*5) = \frac{1}{k} \sum_{z=1}^k z (k-z+1) \cX_s^{-(c-1 ; p+1 | k - z +1 ; s+t)} \cX_s^{-((z) ; s+t)}, 
\end{align*}
then we have 
\begin{align}
\label{cXt+ cXs-cpks+t *}
\cX_t^+ \cX_s^{-(c;p | k ; s+t)} 
= (*1) + (*2) + (*3) - 2 (*4) - (*5). 
\end{align}

By Lemma \ref{Lemma cXtbpkh} (\rov) together with (L4), 
we have 
\begin{align} 
\label{*1} 
(*1) =  \cX_s^{-(c ; p | k ; s+t)} \cX_t^+. 
\end{align}

Put $z'=k-z$ in $(*2)$ and apply Lemma \ref{Lemma cXtbpkh} (\roiv),  
we have 
\begin{align}
\label{*2}
(*2) = \frac{1}{k} \sum_{z'=0}^{k-1} (k-z') \cX_s^{-(c ; p +1| z' ; s+t)} 
	\sum_{w=0}^{k-z'} \begin{pmatrix} k-z' \\ w \end{pmatrix} (-Q)^w \cJ_{(k-z'+1)(s+t) +w}^{\lan 1 \ran}. 
\end{align}

Put $h= z+y$ in $(*3)$ , we have 
\begin{align*}
(*3) = \frac{1}{k} \sum_{h=1}^k \sum_{z=1}^h z \cX_s^{-(c-1 ; p+1 | h-z ; s+t)} \cX_s^{-((z);s+t)} 
	\sum_{w=0}^{k- h} \begin{pmatrix} k- h \\ w \end{pmatrix} (-Q)^w \cJ_{ (k - h +1)(s+t) +w}^{\lan 1 \ran}. 
\end{align*} 
Applying Lemma \ref{Lemma cXtbpkh} (\rov) together with (L4), 
we have 
\begin{align}
\label{*3} 
(*3) = \frac{1}{k} \sum_{h=1}^k h \cX_s^{-(c ; p+1 | h ; s+t)} 
	\sum_{w=0}^{k-h} \begin{pmatrix} k- h \\ w \end{pmatrix} (-Q)^w \cJ_{ (k - h +1)(s+t) +w}^{\lan 1 \ran}. 
\end{align}  

By \eqref{*2} and \eqref{*3}, we have 
\begin{align}
\label{*2 + *3} 
(*2) + (*3) 
= \sum_{z=0}^k \sum_{w=0}^{k-z} \begin{pmatrix} k-z \\ w \end{pmatrix} (-Q)^w 
	\cX_s^{-(c ; p+1 | z ; s+t)}  \cJ_{ (k-z+1)(s+t) +w}^{\lan 1 \ran}. 
\end{align}

We also have 
\begin{align*}
(*4) 
=\frac{1}{k} \sum_{y=0}^{k-1} \sum_{z=1}^{k-y} z \cX_s^{-(c-1 ; p+1 | y ; s+t)} 
	\sum_{w=0}^{k-z-y} \begin{pmatrix} k-z-y \\ w \end{pmatrix} (-Q)^w \cX_{s+(k-z-y)(s+t) +w}^{-((z+1);s+t)}. 
\end{align*}
Put $h= k-y+1$, we have 
\begin{align*}
(*4) = \frac{1}{k} \sum_{h=2}^{k+1} \sum_{z=1}^{h-1} z \cX_s^{-(c-1 ; p+1 | k-h+1 ; s+t)} 
	\sum_{w=0}^{h-z-1} \begin{pmatrix} h-z-1 \\ w \end{pmatrix} (-Q)^w \cX_{s + (h-z-1)(s+t)+w}^{-((z+1); s+t)}. 
\end{align*}
Put 
\begin{align*}
(\sharp) = \sum_{w=0}^{h-z-1} \begin{pmatrix} h-z-1 \\ w \end{pmatrix} (-Q)^w \cX_{s + (h-z-1)(s+t)+w}^{-((z+1); s+t)}. 
\end{align*}
By \eqref{Def cX ph},  
we have 
\begin{align*}
(\sharp) 
=\sum_{w=0}^{h-z-1} \begin{pmatrix} h-z-1 \\ w \end{pmatrix} (-Q)^w 
	\sum_{y=0}^{z+1} \begin{pmatrix} z+1 \\ y \end{pmatrix} (-Q)^y \cX_{s+h (s+t) +w +y}^-. 
\end{align*}
Put $y'= w+y$, 
we have 
\begin{align*}
(\sharp) = \sum_{y'=0}^h 
	\Big( \sum_{ w = \max \{ 0, y' - (z+1)\}}^{\min \{ h - (z+1), y' \}} 
		\begin{pmatrix} h- (z +1) \\ w \end{pmatrix} \begin{pmatrix} z+1 \\ y' -w \end{pmatrix} \Big) 
	 (-Q)^{y'} \cX_{s + h(s+t) + y'}^-. 
\end{align*}
Note that 
$ \sum_{ w = \max \{ 0, y' - (z+1)\}}^{\min \{ h - (z+1), y' \}}
	\begin{pmatrix} h- (z +1) \\ w \end{pmatrix} \begin{pmatrix} z+1 \\ y' -w \end{pmatrix} 
	= \begin{pmatrix} h \\ y' \end{pmatrix} $ 
by \eqref{sum max min}, 
we have 
\begin{align*}
(\sharp) 
= \sum_{y'=0}^h \begin{pmatrix} h \\ y ' \end{pmatrix} (-Q)^{y'} \cX_{s+ h (s+t) +y'}^- 
= \cX_s^{-((h); s+t)}. 
\end{align*}  
(Use \eqref{Def cX ph} again.) 
Then, we have 
\begin{align*}
(*4) 
&= \frac{1}{k} \sum_{h=2}^{k+1} \Big( \sum_{z=1}^{h-1} z \Big) \cX_s^{-(c-1 ; p+1 | k-h+1 ; s+t)} \cX_s^{-((h); s+t)} 
\\
&=  \frac{1}{k} \sum_{h=2}^{k+1} \frac{h(h-1)}{2} \cX_s^{-(c-1 ; p+1 | k-h+1 ; s+t)} \cX_s^{-((h); s+t)}. 
\end{align*}
Then we have 
\begin{align}
\label{2 (*4) + (*5)}
\begin{split} 
2 (*4) + (*5) 
&=  \sum_{z=1}^{k+1} z  \cX_s^{-(c-1 ; p+1 | k -z +1 ; s+t)} \cX_s^{-((z); s+t)} 
\\
&= (k+1) \cX_s^{- (c ; p+1 | k+1  ; s+t)}, 
\end{split}
\end{align}
where the last equation follows from Lemma \ref{Lemma cXtbpkh} (\rov).

By \eqref{cXt+ cXs-cpks+t *}, \eqref{*1}, \eqref{*2 + *3} and \eqref{2 (*4) + (*5)}, 
we have 
\begin{align*}
&\cX_t^+ \cX_s^{-(c; p | k ; s+t)} 
\\
&= \cX_s^{-(c;p | k; s+t)} \cX_t^+  
	+ \sum_{z=0}^k \sum_{w=0}^{k-z} \begin{pmatrix} k - z \\ w \end{pmatrix} (-Q)^w 
		\cX_s^{-(c ; p+1 | z ; s+t)}  \cJ_{(k-z+1)(s+t) +w}^{\lan 1 \ran} 
	\\ & \quad 
	-  (k+1) \cX_s^{- (c ; p+1 | k+1;s+t)}. 
\end{align*}
\end{proof}
　
\begin{prop}
\label{Prop comm rel cXt+b cXs-c}
For $s,t,b,c \in \ZZ_{ \geq 0}$, we have 
\begin{align}
\label{Prop cXtb cXsc}
[\cX_t^{+(b)}, \cX_s^{-(c)}] 
= \sum_{p=1}^{\min \{b,c\}} \sum_{k=0}^p \sum_{l=0}^{p-k} 
	(-1)^{k+l} \cX_s^{-(c;p|k;s+t)} \cJ_{s+t}^{\lan p- (k+l) \ran} \cX_t^{+(b;p|l;s+t)}. 
\end{align}
\end{prop}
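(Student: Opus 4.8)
The plan is to fix $s,t$ and argue by induction on $b$, writing $\cX_t^{+(b)} = \tfrac1b\,\cX_t^+\cX_t^{+(b-1)}$ so as to peel off one factor $\cX_t^+$ and carry it to the right through $\cX_s^{-(c)}$. Write $R(b,c)$ for the right-hand side of \eqref{Prop cXtb cXsc}. The case $b=0$ is trivial (both sides vanish, the $p$-range being empty). For $b\ge1$ the Leibniz rule gives
\begin{align*}
b\,[\cX_t^{+(b)},\cX_s^{-(c)}] = \cX_t^+\,[\cX_t^{+(b-1)},\cX_s^{-(c)}] + [\cX_t^+,\cX_s^{-(c)}]\,\cX_t^{+(b-1)},
\end{align*}
into which I substitute $[\cX_t^{+(b-1)},\cX_s^{-(c)}] = R(b-1,c)$ by induction and evaluate $[\cX_t^+,\cX_s^{-(c)}]$ by Lemma \ref{Lemma cXt cXsc}, rewriting its two summands as $\cX_s^{-(c;1|0;s+t)}\cJ_{s+t}^{\lan1\ran}$ and $\cX_s^{-(c;1|1;s+t)}$ via Lemma \ref{Lemma cXtbpkh} (\roii) and (L4). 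It then remains to transport $\cX_t^+$ rightward through each summand of $\cX_t^+R(b-1,c)$ and to check that the total equals $b\,R(b,c)$.

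For a summand $(-1)^{k+l}\cX_s^{-(c;p|k;s+t)}\cJ_{s+t}^{\lan p-k-l\ran}\cX_t^{+(b-1;p|l;s+t)}$ of $R(b-1,c)$ I first move $\cX_t^+$ past $\cX_s^{-(c;p|k;s+t)}$ using Lemma \ref{Lemma cXt+ cXs-cpks+t}. This produces a \emph{diagonal} piece $\cX_s^{-(c;p|k;s+t)}\cX_t^+$, in which the index $p$ is unchanged, and a \emph{commutator} piece in which $p$ is raised to $p+1$. In the diagonal piece I then commute $\cX_t^+$ past $\cJ_{s+t}^{\lan p-k-l\ran}$ by Lemma \ref{Lemma cJsp cXt} (\roi) and absorb the resulting $\cX_t^+$ and the auxiliary $\cX_t^{+((z);s+t)}$ into $\cX_t^{+(b;p|\cdot;s+t)}$ by Lemma \ref{Lemma cXtbpkh} (\rov), (\rovi). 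The recursion in (\rovi) contributes the scalar $(b-p+l)$ to the ``leading'' term that retains $\cX^+$-order $l$; I expect the remaining $\cX_t^{+((z);s+t)}\cX_t^{+(b-1;p|l-z;s+t)}$ terms to cancel term by term against the remainder produced when $\cX_t^+$ crossed $\cJ_{s+t}^{\lan\cdot\ran}$, the two carrying opposite signs once the $\cJ$-orders are matched.

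The commutator piece, where $p$ becomes $p+1$, is the main obstacle. Here Lemma \ref{Lemma cXt+ cXs-cpks+t} leaves a factor $\cJ^{\lan1\ran}_{(k-z+1)(s+t)+w}$ to the left of the pre-existing $\cJ_{s+t}^{\lan p-k-l\ran}$, so a priori one meets a product of two $\cJ^{\lan\cdot\ran}$'s rather than the single one occurring in $R(b,c)$. The key observation is that, on writing $\cJ^{\lan1\ran}=\cJ_{\bullet}+(-Q)\cJ_{\bullet+1}$ and combining the inner $w$-sum by Pascal's rule (exactly as in the closing display of the proof of Lemma \ref{Lemma cJs1 cXtph}), this factor becomes the cluster $\sum_{w=0}^{j}\binom{j}{w}(-Q)^w\cJ_{j(s+t)+w}$ with $j=k-z+1$, which is precisely the cluster entering the defining recursion \eqref{Def cJ p} of $\cJ_{s+t}^{\lan\cdot\ran}$. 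Summing these clusters against $\cJ_{s+t}^{\lan\cdot\ran}$ over $j$, with the signs carried from $R(b-1,c)$, reassembles a single $\cJ_{s+t}^{\lan\cdot\ran}$ of one higher order through \eqref{Def cJ p}; together with the $-(k+1)\cX_s^{-(c;p+1|k+1;s+t)}$ term of Lemma \ref{Lemma cXt+ cXs-cpks+t} this contributes the scalar $(p+1-l)$, while Lemma \ref{Lemma cXtbpkh} (\roiv) identifies the trailing factor $\cX_t^{+(b-1;p|l;s+t)}$ with $\cX_t^{+(b;p+1|l;s+t)}$.

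Finally I collect the contributions to each fixed term of $R(b,c)$: the diagonal stream supplies the scalar $(b-p'+l')$ and the commutator stream the scalar $(p'-l')$, whose sum is exactly $b$, while the non-leading pieces cancel as indicated above. Dividing by $b$ then yields $[\cX_t^{+(b)},\cX_s^{-(c)}]=R(b,c)$ and closes the induction. I expect the real work to lie in the commutator stream — reassembling the $\cJ$-clusters via \eqref{Def cJ p} and verifying the attendant binomial collapses, for which the Vandermonde-type identity \eqref{sum max min} is the main tool — and in confirming that the two scalar streams add to $b$ for every admissible triple $(p',k',l')$.
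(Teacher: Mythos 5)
Your proposal follows essentially the same route as the paper's proof: induction on $b$ peeling off one $\cX_t^+$, evaluating $[\cX_t^+,\cX_s^{-(c)}]$ by Lemma \ref{Lemma cXt cXsc}, transporting $\cX_t^+$ through $\cX_s^{-(c;p|k;s+t)}$ and $\cJ_{s+t}^{\lan\cdot\ran}$ via Lemmas \ref{Lemma cXt+ cXs-cpks+t} and \ref{Lemma cJsp cXt}, reassembling the double $\cJ$-cluster through the recursion \eqref{Def cJ p}, and absorbing the $\cX^+$-factors with Lemma \ref{Lemma cXtbpkh} (\roiv)--(\rovi) so that the two streams contribute $(b-p+l)$ and $(p-l)$, summing to $b$. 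This matches the paper's computation (its terms $(*1)$--$(*5)$) step for step, the only cosmetic difference being that the auxiliary $\cX_t^{+((z);s+t)}$ terms are not cancelled but combined via Lemma \ref{Lemma cXtbpkh} (\rovi), which you in fact also invoke.
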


\begin{proof}
We prove \eqref{Prop cXtb cXsc} by the induction on $b$. 
If $b=1$, 
\eqref{Prop cXtb cXsc} follows from Lemma \ref{Lemma cXt cXsc} together with Lemma \ref{Lemma cXtbpkh}. 

If $b >1$, we have 
\begin{align*}
&\cX_t^{+(b)} \cX_s^{-(c)} 
\\
&= \frac{1}{b} \cX_t^+ \cX_t^{(b-1)} \cX_s^{-(c)} 
\\
&= \frac{1}{b} \cX_t^+ 
	\Big( \cX_s^{-(c)} \cX_t^{+(b-1)} 
		+ \sum_{p=1}^{\min \{ b-1, c \}} \sum_{k=0}^p \sum_{l=0}^{p-k} 
		(-1)^{k+l} \cX_s^{-(c;p | k ; s+t)} \cJ_{s+t}^{\lan p - (k+l)\ran} \cX_t^{+(b -1 ; p | l ; s+t)} 
	\Big) 
\end{align*}
by the assumption of the induction. 
Applying Lemma \ref{Lemma cXt cXsc} and Lemma \ref{Lemma cXt+ cXs-cpks+t}, 
we have 
\begin{align*}
&\cX_t^{+(b)} \cX_s^{-(c)} 
\\
&= \frac{1}{b} \Big\{ 
	\Big( \cX_s^{-(c)} \cX_t^+ + \cX_s^{-(c-1)} \cJ_{s+t}^{\lan 1 \ran} - \cX_s^{-(c-2)} \cX_s^{-((1) ; s+t)} 
	\Big) \cX_t^{+(b-1)} 
	\\ & \hspace{3em} 
	+ \sum_{p=1}^{ \min \{ b-1, c\}} \sum_{k=0}^p \sum_{l=0}^{p-k} (-1)^{k+l} 
		\\ & \hspace{4em} \times 
		\Big( \cX_s^{-(c ; p | k ; s+t)} \cX_t^+ 
			+ \sum_{z=0}^k \sum_{w=0}^{k-z} \begin{pmatrix} k-z \\ w \end{pmatrix} (-Q)^w 
				\cX_s^{-(c; p+1 | z ; s+t)} \cJ_{(k-z+1)(s+t) +w}^{\lan 1 \ran} 
			\\ & \hspace{6em} 
			- (k+1) \cX_s^{-(c;p+1 | k+1 ; s+t)} 
		\Big) \cJ_{s+t}^{\lan p - (k+l) \ran} \cX_{t}^{+(b-1 ; p | l ; s+t)} 
	\Big\}. 
\end{align*}
On the other hand, 
by Lemma \ref{Lemma cJsp cXt}, we have 
\begin{align*}
&\cX_s^{-(c ; p | k ; s+t)} \cX_t^+ \cJ_{s+t}^{\lan p - (k+l) \ran} \cX_t^{+(b-1 ; p | l ; s+t)} 
\\
&= \cX_s^{-(c ; p | k ; s+t)} 
	\Big( \cJ_{s+t}^{\lan p - (k+l) \ran} \cX_t^+ 
		- \sum_{z=1}^{p - (k+l)} (-1)^{z+1} (z+1) \cJ_{s+t}^{\lan p -(k+l) - z \ran} \cX_t^{+((z); s+t)} 
	\Big) \cX_t^{+(b-1 ; p | l ; s+t)}. 
\end{align*}
Put 
\begin{align*}
& (*1) = b \cX_s^{-(c)} \cX_t^{+(b)} + \cX_s^{-(c-1)} \cJ_{s+t}^{\lan 1 \ran} \cX_t^{+(b-1)} 
	- \cX_s^{-(c-2)} \cX_s^{-((1); s+t)} \cX_t^{+(b-1)}, 
\\
& (*2) = \sum_{p=1}^{\min \{ b,c\}} \sum_{k=0}^p \sum_{l=0}^{p-k} (-1)^{k+l} 
	\cX_s^{-(c ; p | k ; s+t)} \cJ_{s+t}^{\lan p -(k+l) \ran} \cX_t^+ \cX_t^{+(b-1 ; p | l ; s+t)}, 
\\
&(*3) = \sum_{p=1}^{\min \{ b,c\}} \sum_{k=0}^p \sum_{l=0}^{p-k} \sum_{z=1}^{p- (k+l)} (-1)^{k+l +z} (z+1)
	\\ & \hspace{5em} \times 
	\cX_s^{-(c ; p | k ; s+t)} \cJ_{s+t}^{\lan p - (k+l) -z \ran} \cX_t^{+((z) ; s+t)} \cX_t^{+(b-1 ; p | l ; s+t)}, 
\\
&(*4) = \sum_{p=1}^{\min \{ b,c\}} \sum_{k=0}^p \sum_{l=0}^{p-k} \sum_{z=0}^k \sum_{w=0}^{k-z} (-1)^{k+l} 
	 \begin{pmatrix} k-z \\ w \end{pmatrix} (-Q)^w 
	 \\ & \hspace{5em} \times 
	\cX_s^{-(c; p+1 | z ; s+t)} \cJ_{(k-z+1)(s+t) +w}^{\lan 1 \ran} \cJ_{s+t}^{\lan p - (k+l) \ran} \cX_{t}^{+(b-1 ; p | l ; s+t)}, 
\\
&(*5)= \sum_{p=1}^{\min \{ b,c\}} \sum_{k=0}^p \sum_{l=0}^{p-k} (-1)^{k+l+1} (k+1) 
	\cX_s^{-(c;p+1 | k+1 ; s+t)} \cJ_{s+t}^{\lan p - (k+l) \ran} \cX_{t}^{+(b-1 ; p | l ; s+t)}, 
\end{align*}
then we have 
\begin{align}
\label{cXtn cXsc}
\cX_t^{+(b)} \cX_s^{-(c)} 
= \frac{1}{b} \big\{ (*1) + (*2) + (*3) + (*4) + (*5) \big\}, 
\end{align}
where we note that 
$\cX_t^{+(b-1 ; p | l ; s+t)} =0$ if $p=b$ by Lemma \ref{Lemma cXtbpkh} (\roi). 

By Lemma \ref{Lemma cXtbpkh} (\roii) together with (L4), 
we have 
\begin{align}
\label{**1}
(*1) = b \cX_s^{-(c)} \cX_t^{+(b)} 
	+ \cX_s^{-(c  ; 1 | 0 ; s+t)} \cJ_{s+t}^{\lan 1 \ran} \cX_t^{+(b;1 |0 ;s+t)} 
	- \cX_s^{-(c;1 | 1 ; s+t)} \cJ_{s+t}^{\lan 0 \ran} \cX_t^{+(b ; 1 | 0 ;s+t)}. 
\end{align}

Put $l'=l+z$ in $(*3)$,  
we have 
\begin{align*}
&(*3) 
\\
&= \sum_{p=1}^{\min \{ b,c \}} \sum_{k=0}^p \sum_{l'=1}^{p-k} \sum_{z=1}^{l'} (-1)^{k + l'} (z+1) 
	\cX_s^{-(c ; p | k ; s+t)} \cJ_{s+t}^{\lan p - k - l' \ran} \cX_t^{+((z); s+t)} \cX_t^{+(b-1 ; p | l'-z ; s+t)}. 
\end{align*}
Then we have 
\begin{align*}
(*2) + (*3) 
&=\sum_{p=1}^{\min\{ b, c\}} \sum_{k=0}^p \sum_{l=0}^{p-k} 
	(-1)^{k+l} \cX_s^{-(c;p |k ; s+t)} \cJ_{s+t}^{\lan p - (k+l) \ran} 
	\\ & \hspace{2em} \times 
	\Big( \cX_t^+ \cX_t^{+(b-1 ; p | l ; s+t)} 
		+ \sum_{z=1}^l (z+1) \cX_t^{+((z); s+t)} \cX_t^{+(b-1 ; p | l-z ; s+t)} \Big), 
\end{align*}
where we note that $\sum_{z=1}^0 \cX_t^{+((z); s+t)} \cX_t^{+(b-1 ; p | l-z ; s+t)} =0$. 
Applying Lemma \ref{Lemma cXtbpkh} (\rovi), 
we have  
\begin{align}
\label{**2 + **3}
\begin{split} 
(*2) + (*3) 
&= \sum_{p=1}^{\min \{b,c \}} \sum_{k=0}^p \sum_{l=0}^{p-k} (-1)^{k+l} (b-p+l) 
	\cX_s^{-(c;p |k ; s+t)} \cJ_{s+t}^{\lan p - (k+l) \ran} \cX_t^{+(b;p | l ; s+t)} 
\\
&= (b-1) \cX_s^{-(c ; 1 | 0 ; s+t)} \cJ_{s+t}^{\lan 1 \ran} \cX_t^{+(b ; 1 | 0 ; s+t)} 
	- b \cX_s^{-(c ; 1 | 0 ; s+t)} \cJ_{s+t}^{\lan 0 \ran} \cX_t^{+(b ; 1 | 1 ; s+t)}
	\\ & \hspace{1em} 
	- (b-1) \cX_s^{-(c ; 1 | 1 ; s+t)} \cJ_{s+t}^{\lan 0 \ran} \cX_t^{+(b ; 1 | 0; s+t)} 
	\\ & \hspace{1em} 
	+ \sum_{p=2}^{\min \{b,c \}} \sum_{k=0}^p \sum_{l=0}^{p-k} (-1)^{k+l} (b-p+l) 
	\cX_s^{-(c;p |k ; s+t)} \cJ_{s+t}^{\lan p - (k+l) \ran} \cX_t^{+(b;p | l ; s+t)}. 
\end{split} 
\end{align}

Put $p'= p+1$ in $(*4)$, 
we have 
\begin{align*}
(*4) 
&= \sum_{p'=2}^{\min\{ b, c \}} \sum_{k=0}^{p'-1} \sum_{l=0}^{p'-k-1} \sum_{z=0}^k \sum_{w=0}^{k-z} 
	(-1)^{k+l} \begin{pmatrix} k - z \\ w \end{pmatrix} (-Q)^w 
	\\ & \hspace{3em} \times 
	\cX_s^{-(c ; p' | z ; s+t)} \cJ_{(k-z+1)(s+t) +w}^{\lan 1 \ran} \cJ_{s+t}^{\lan p' - (k+l) -1 \ran} \cX_t^{+(b-1; p'-1 | l ; s+t)}, 
\end{align*} 
where we note that 
$\cX_t^{+(b-1 ; p'-1 | l ; s+t)} =0$ if $p'=b+1$, 
and 
$\cX_s^{-(c;p' | z; s+t)} =0 $ if $p'=c+1$ 
by Lemma \ref{Lemma cXtbpkh} (\roi). 
Note that 
\begin{align*} 
\sum_{k=0}^{p-1} \sum_{l=0}^{p-k-1} \sum_{z=0}^k 
= \sum_{z=0}^{p-1} \sum_{k=z}^{p-1} \sum_{l=0}^{p-k-1} 
=  \sum_{z=0}^{p-1} \sum_{l=0}^{p-z-1} \sum_{k=z}^{p-l-1}, 
\end{align*}
we have 
\begin{align*}
(*4) 
&= \sum_{p=2}^{\min\{ b,c\}} \sum_{z=0}^{p-1} \sum_{l=0}^{p-z-1} \cX_s^{-(c ; p | z ; s+t)} 
	\\ & \hspace{2em} \times 
	\Big( \sum_{k=z}^{p-l-1} \sum_{w=0}^{k-z}(-1)^{k+l} \begin{pmatrix} k - z \\ w \end{pmatrix} (-Q)^w 
	 \cJ_{(k-z+1)(s+t) +w}^{\lan 1 \ran} \cJ_{s+t}^{\lan p - (k+l) -1 \ran} 
	 \Big)  \cX_t^{+(b-1; p -1 | l ; s+t)}. 
\end{align*} 
Put $k'=k-z+1$, 
we have 
\begin{align*}
&\sum_{k=z}^{p-l-1} \sum_{w=0}^{k-z}(-1)^{k+l} \begin{pmatrix} k - z \\ w \end{pmatrix} (-Q)^w 
	 \cJ_{(k-z+1)(s+t) +w}^{\lan 1 \ran} \cJ_{s+t}^{\lan p - (k+l) -1 \ran} 
\\
&= \sum_{k'=1}^{p-l-z} \sum_{w=0}^{k'-1} (-1)^{k'+z +l-1} \begin{pmatrix} k'-1 \\ w \end{pmatrix} (-Q)^w 
	\cJ_{ k' (s+t) +w}^{\lan 1 \ran} \cJ_{s+t}^{\lan p - k' -z -l \ran}.  
\end{align*}
Since $\cJ_{k'(s+t) +w}^{\lan 1 \ran} = \cJ_{k'(s+t) +w} + (-Q) \cJ_{k'(s+t)+w+1}$, 
we see that 
\begin{align*}
\sum_{w=0}^{k'-1} \begin{pmatrix} k'-1 \\ w \end{pmatrix} (-Q)^w \cJ_{k'(s+t) +w}^{\lan 1 \ran} 
= \sum_{w=0}^{k'} \begin{pmatrix} k' \\ w \end{pmatrix} (-Q)^w \cJ_{k'(s+t)+w}. 
\end{align*}
Thus we have 
\begin{align*}
&\sum_{k=z}^{p-l-1} \sum_{w=0}^{k-z}(-1)^{k+l} \begin{pmatrix} k - z \\ w \end{pmatrix} (-Q)^w 
	 \cJ_{(k-z+1)(s+t) +w}^{\lan 1 \ran} \cJ_{s+t}^{\lan p - (k+l) -1 \ran} 
\\
&= (-1)^{z+l} \sum_{k'=1}^{p-l-z} (-1)^{k' -1} \sum_{w=0}^{k'} \begin{pmatrix} k' \\ w \end{pmatrix} (-Q)^w 
	\cJ_{k' (s+t) +w}  \cJ_{s+t}^{\lan p - k' - z- l \ran} 
\\
&= (-1)^{z+l} (p-l-z) \cJ_{s+t}^{\lan p-l-z \ran}, 
\end{align*}
where the last equation follows from \eqref{Def cJ p}. 
Then we have 
\begin{align*} 
(*4) 
&= \sum_{p=2}^{\min \{ b , c \}} \sum_{z=0}^{p-1} \sum_{l=0}^{p-z-1} 
	(p-l-z) (-1)^{z+l}  \cX_s^{-(c ; p | z  ; s+t)} \cJ_{s+t}^{\lan p - l - z \ran} \cX_t^{+(b-1 ; p-1 | l ; s+t)} 
\\
&= \sum_{p=2}^{\min \{ b , c \}} \sum_{z=0}^{p} \sum_{l=0}^{p-z} 
	(p-l-z) (-1)^{z+l}  \cX_s^{-(c ; p | z  ; s+t)} \cJ_{s+t}^{\lan p - l - z \ran} \cX_t^{+(b-1 ; p-1 | l ; s+t)}. 
\end{align*}
Applying Lemma \ref{Lemma cXtbpkh} (\roiv), 
we have 
\begin{align}
\label{**4}
(*4) = \sum_{p=2}^{\min \{ b , c \}} \sum_{z=0}^{p} \sum_{l=0}^{p-z} 
	(p-l-z) (-1)^{z+l}  \cX_s^{-(c ; p | z  ; s+t)} \cJ_{s+t}^{\lan p - l - z \ran} \cX_t^{+(b ; p | l ; s+t)}. 
\end{align}

Put $p'=p+1$ in $(*5)$, 
we have 
\begin{align*} 
(*5)= 
\sum_{p'=2}^{\min \{b , c \}} \sum_{k = 0}^{p'-1} \sum_{l=0}^{p'-k-1} (-1)^{k+l+1} (k+1) 
	\cX_s^{-(c ; p' | k+1 ; s+t)} \cJ_{s+t}^{\lan p' - k - l -1 \ran} \cX_t^{+(b-1 ; p'-1 | l ; s+t)}, 
\end{align*}
where we note that 
$\cX_t^{+(b-1 ; p'-1 | l ; s+t)} =0$ if $p'=b+1$, 
and 
$\cX_s^{-(c;p' | k+1; s+t)} =0 $ if $p'=c+1$ 
by Lemma \ref{Lemma cXtbpkh} (\roi). 
Put $k'=k+1$, 
we have 
\begin{align*}
(*5) 
&= \sum_{p'=2}^{\min \{ b,c \}} \sum_{k'=1}^{p'} \sum_{l=0}^{p' - k'} (-1)^{k' +l} k' 
	\cX_s^{-(c ; p' | k' ; s+t)} \cJ_{s+t}^{\lan p'-k' -l \ran} \cX_t^{+(b-1 ; p'-1 | l ; s+t)} 
\\
&= \sum_{p=2}^{\min \{ b , c \}} \sum_{k=0}^p \sum_{l=0}^{p-k} k (-1)^{k+l} 
	\cX_s^{-(c ; p | k ; s+t)} \cJ_{s+t}^{\lan p-k -l \ran} \cX_t^{+(b-1 ; p-1 | l ; s+t)}. 
\end{align*}
Applying Lemma \ref{Lemma cXtbpkh} (\roiv), 
we have 
\begin{align}
\label{**5}
(*5) 
= \sum_{p=2}^{\min \{ b , c \}} \sum_{k=0}^p \sum_{l=0}^{p-k} k (-1)^{k+l} 
	\cX_s^{-(c ; p | k ; s+t)} \cJ_{s+t}^{\lan p-k -l \ran} \cX_t^{+(b ; p | l ; s+t)}.
\end{align}

By \eqref{cXtn cXsc}, \eqref{**1}, \eqref{**2 + **3}, \eqref{**4} and \eqref{**5}, 
we have 
\begin{align*}
\cX_t^{+(b)} \cX_s^{-(c)} 
= \cX_s^{-(c)} \cX_t^{+(b)} 
	+ \sum_{p=1}^{\min \{b , c \}} \sum_{k=0}^p  \sum_{l=0}^{p-k} (-1)^{k+l} 
		\cX_s^{-(c ; p | k ; s+t)} \cJ_{s+t}^{\lan p - (k+l) \ran} \cX_t^{+(b ; p | l ; s+t)}. 
\end{align*}
\end{proof}



\section{Rank $1$ case ; finite dimensional simple modules of  $U (\Fsl_2^{\lan Q \ran} [x])$} 

In this section, we classify the finite dimensional simple $U(\Fsl_2^{\lan Q \ran}[x])$-modules. 

\para {\bf $1$-dimensional representations.}
\label{1-dim rep sl2}
First, we consider  $1$-dimensional representations of $\Fsl_2^{\lan Q \ran}[x]$.  
Let $L= \CC v$ be a $1$-dimensional $U(\Fsl_2^{\lan Q \ran}[x])$-module with a basis $\{v\}$, 
then $\cJ_t$ ($t \geq 0$) acts on $v$ as a scalar multiplication. 
If $\cX_t^+ \cdot v \not=0$ (resp. $\cX_t^- \cdot v \not=0$), 
then $\cX_t^+ \cdot v$ (resp. $\cX_t^- \cdot v$) is an eigenvector for the action of $\cJ_0$ 
whose eigenvalue is different from one of $v$ by the defining relation (L2). 
This is a contradiction since $L$ is $1$-dimensional. 
Thus, we have $\cX_t^{\pm} \cdot v=0$  for $t \geq 0$. 
Moreover, by the defining relation (L3), we have 
$(\cJ_t - Q \cJ_{t+1}) \cdot v = (\cX_t^+ \cX_0^- - \cX_0^- \cX_t^+) \cdot v =0$. 
This implies that $\cJ_t \cdot v=0$ for $t \geq 0$ if $Q=0$, 
and that $\cJ_t \cdot v = Q^{-t} \cJ_{0} \cdot v $ for $t >0$ if $Q \not=0$. 

We define the set $\BB^{\lan Q \ran}$ by 
\begin{align*}
\BB^{\lan Q \ran} = \begin{cases}
		\{0 \} & \text{ if } Q=0, 
		\\
		\CC & \text{ if } Q\not=0. 
		\end{cases} 
\end{align*}
For each $\b \in \BB^{\lan Q \ran}$, 
we can define the $1$-dimensional $U(\Fsl_2^{\lan Q \ran}[x])$-module $\cL^\b = \CC v_0$ such that 
\begin{align*}
\cX_t^{\pm} \cdot v_0 =0, \quad 
\cJ_t \cdot v_0 = 
	\begin{cases} 
		0 & \text{ if } Q=0, 
		\\
		Q^{-t} \b v_0 & \text{ if } Q \not=0
	\end{cases} 
\quad (t \in \ZZ_{\geq 0}) 
\end{align*}
by checking the defining relations of $\Fsl_2^{\lan Q \ran}[x]$. 
Note that $\cL^0$ is the trivial representation. 
Now we obtain the following lemma. 


\begin{lem} 
\label{Lemma 1-dim rep rank1}
Any $1$-dimensional $U(\Fsl_2^{\lan Q \ran}[x])$-module is isomorphic to $\cL^\b$ for some $\b \in \BB^{\lan Q \ran}$. 
\end{lem}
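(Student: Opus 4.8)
The plan is to observe that essentially all of the computation has already been carried out in the discussion preceding the statement, so the proof amounts to assembling those observations and matching the resulting scalars against the defining data of $\cL^{\b}$.

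First I would take an arbitrary $1$-dimensional module $L = \CC v$ and recall from the paragraph above that the weight argument using relation (L2) forces $\cX_t^{\pm} \cdot v = 0$ for all $t \geq 0$: if some $\cX_t^{\pm} \cdot v$ were nonzero it would be a $\cJ_0$-eigenvector with an eigenvalue different from that of $v$, which is impossible in a $1$-dimensional space. Feeding this into relation (L3) gives $(\cJ_t - Q \cJ_{t+1}) \cdot v = 0$ for all $t \geq 0$, which determines the action of every $\cJ_t$ from that of $\cJ_0$ together with $Q$.

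Next I would split into the two cases according to $Q$. If $Q = 0$, the relation reads $\cJ_t \cdot v = 0$ for all $t \geq 0$, so $L$ is the trivial module $\cL^{0}$; since $\BB^{\lan Q \ran} = \{0\}$ in this case, $\b = 0$ is the required parameter. If $Q \neq 0$, let $\b \in \CC$ be the scalar by which $\cJ_0$ acts on $v$; iterating the recursion then gives $\cJ_t \cdot v = Q^{-t} \b v$ for all $t \geq 0$ (the case $t = 0$ being automatic since $Q^{0} = 1$). Because $\BB^{\lan Q \ran} = \CC$ here, every such $\b$ is admissible, and comparing with the defining action of $\cL^{\b}$ yields $L \cong \cL^{\b}$.

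There is no serious obstacle: the one-dimensionality of $L$ does all the work in ruling out the $\cX_t^{\pm}$, and the well-definedness of $\cL^{\b}$ (already checked against the defining relations) guarantees that the target module exists. The only point requiring minor care is to confirm that the scalars forced on $L$ coincide exactly with those prescribed for $\cL^{\b}$ in both cases, which is immediate from the formulas.
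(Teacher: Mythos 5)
Your proposal is correct and follows exactly the argument the paper gives in the paragraph preceding the lemma: the relation (L2) eigenvalue argument forces $\cX_t^{\pm}\cdot v=0$, relation (L3) then yields $(\cJ_t - Q\cJ_{t+1})\cdot v=0$, and the case split on $Q$ identifies the module with $\cL^{\b}$. No differences worth noting.
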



\para 
Recall from \S \ref{section Rep slmx}, 
a finite dimensional simple $U(\Fsl_2^{\lan Q \ran}[x])$-module is isomorphic to a simple highest weight module 
$\cL(\bu)$ for some highest weight $\bu=(u_t) \in \prod_{t \geq 0} \CC$ (Proposition \ref{Prop simple slmx HW}), 
where we omit the first index for the highest weight. 
Then, in order to classify the finite dimensional simple  $U(\Fsl_2^{\lan Q \ran}[x])$-module, 
it is enough to classify the highest weight $\bu$ such that $\cL(\bu)$ is finite dimensional. 

In order to obtain a necessary condition for $\bu$ such that $\cL(\bu)$ is finite dimensional, 
we prepare the following lemma.  


\begin{lem} 
\label{Lemma sum J v}
Let $M$ be a finite dimensional $U(\Fsl_2^{\lan Q \ran}[x])$-module. 
Take an element $v \in M$ satisfying 
\begin{align*}
\cX_t^+ \cdot v=0, \quad \cJ_t \cdot v = u_t v \quad (t \in \ZZ_{\geq 0}), 
\quad \cX_0^{-(n)} \cdot v \not=0 
\text{ and } 
\cX_0^{-(n+1)} \cdot v=0  
\end{align*}
for some $u_t \in \CC$ ($t \in \ZZ_{\geq 0}$) and $n \in \ZZ_{\geq 0}$.  
(In fact,  a such  element exists by Lemma \ref{Lemma finite dim module}.) 
Then, for $s,t \in \ZZ_{\geq 0}$, we have 
\begin{align*}
\sum_{w=0}^n \begin{pmatrix} n \\ w \end{pmatrix} (-Q)^w \cJ_{t + n s + w}^{\lan 1 \ran} \cdot v 
= \sum_{k=0}^{n-1} (-1)^{n-k+1} 
	\Big( \sum_{w=0}^k \begin{pmatrix} k \\ w \end{pmatrix} (-Q)^w \cJ_{t+ ks +w}^{\lan 1 \ran} \Big) 
	\cJ_s^{\lan n-k \ran} \cdot v. 
\end{align*}
\end{lem}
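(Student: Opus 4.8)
The plan is to reduce everything to the single \emph{vector} identity $\Xi_n\cdot v=0$, where
\begin{align*}
\Xi_n=\cX_0^{-((n);s)}+\sum_{k=1}^{n-1}(-1)^{n-k}\cX_0^{-((k);s)}\cJ_s^{\lan n-k\ran}+(-1)^n\cX_0^-\,\cJ_s^{\lan n\ran},
\end{align*}
and to recover the assertion from it by applying $\cX_t^+$. (The degenerate case $n=0$ is immediate: there $\cX_0^-\cdot v=0$, so $\cJ_t^{\lan1\ran}\cdot v=[\cX_t^+,\cX_0^-]\cdot v=0$ by (L3), which is the claim.) For $n\ge1$, note that $\cJ_s^{\lan p\ran}\in U(\bn^0)$ acts on the $\bn^0$-eigenvector $v$ by a scalar, so each $\cJ_s^{\lan n-k\ran}\cdot v$ is a multiple of $v$ and may be pulled to the front; using $\cX_t^+\cdot v=0$ this gives $\cX_t^+\cX_0^{-((k);s)}\cJ_s^{\lan n-k\ran}\cdot v=[\cX_t^+,\cX_0^{-((k);s)}]\cJ_s^{\lan n-k\ran}\cdot v$, and similarly for the last term. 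Evaluating the commutators $[\cX_t^+,\cX_0^{-((k);s)}]$ ($k\ge1$) and $[\cX_t^+,\cX_0^{-((n);s)}]$ by Lemma \ref{Lemma cXt+ cXs-(p)h}, together with $[\cX_t^+,\cX_0^-]=\cJ_t^{\lan1\ran}$ by (L3) for the $k=0$ contribution, and collecting signs, the equation $\cX_t^+\Xi_n\cdot v=0$ becomes exactly the stated identity.

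The conceptual point is that $\Xi_n\cdot v=0$ is produced by acting at degree $s$ rather than $t$, so that the two indices stay separated and only $\cX_0^{-(n+1)}\cdot v=0$ is needed. Since $\cX_s^{+(n)}\cdot v=0$ (as $n\ge1$ and $\cX_s^+\cdot v=0$) and $\cX_0^{-(n+1)}\cdot v=0$, we have $[\cX_s^{+(n)},\cX_0^{-(n+1)}]\cdot v=\cX_s^{+(n)}\cX_0^{-(n+1)}\cdot v=0$. I expand this commutator by Proposition \ref{Prop comm rel cXt+b cXs-c} with $b=n$, $c=n+1$ and with $\cX_s^{+(n)},\cX_0^{-(n+1)}$ playing the roles of $\cX_t^{+(b)},\cX_s^{-(c)}$, so that the index ``$s+t$'' there specializes to $s$. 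Applied to $v$, the factor $\cX_s^{+(n;p|l;s)}\cdot v$ vanishes unless $p=n$ and $l=0$: by \eqref{Def cXbpkh} and \eqref{Def cXlah} every other summand of $\cX_s^{+(n;p|l;s)}$ contains a factor $\cX_r^+$ for some $r$, which kills $v$ since $\cX_r^+\cdot v=0$ for all $r$, while $\cX_s^{+(n;n|0;s)}=1$ by Lemma \ref{Lemma cXtbpkh}(\roiii). Thus the triple sum collapses to
\begin{align*}
\sum_{k=0}^{n}(-1)^{k}\,\cX_0^{-(n+1;n|k;s)}\,\cJ_s^{\lan n-k\ran}\cdot v=0.
\end{align*}

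It remains to simplify the coefficients. Here $b-p=1$, so by \eqref{Def cXbpkh}, $\cX_0^{-(n+1;n|k;s)}=\sum_{\la\vdash k}\cX_0^{-(\la;s)}\cX_0^{-(1-\ell(\la))}$; the factor $\cX_0^{-(1-\ell(\la))}$ vanishes whenever $\ell(\la)\ge2$, so only $\la=(k)$ contributes for $k\ge1$, giving $\cX_0^{-((k);s)}$, while $k=0$ gives $\cX_0^{-(1)}=\cX_0^-$. Multiplying the displayed relation by $(-1)^n$ and splitting off the $k=n$ term reproduces $\Xi_n\cdot v=0$ verbatim. The only delicate steps are the two vanishing/collapse analyses (of $\cX_s^{+(n;p|l;s)}\cdot v$ and of $\cX_0^{-(n+1;n|k;s)}$), which are routine bookkeeping with Lemma \ref{Lemma cXtbpkh} and definitions \eqref{Def cX ph}, \eqref{Def cXbpkh}, \eqref{Def cXlah}; everything else is formal manipulation of the already-established commutation formulas.
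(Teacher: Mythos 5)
Your proof is correct and follows essentially the same route as the paper: both expand $\cX_s^{+(n)}\cX_0^{-(n+1)}\cdot v=0$ via Proposition \ref{Prop comm rel cXt+b cXs-c}, collapse $\cX_s^{+(n;p|l;s)}\cdot v$ and $\cX_0^{-(n+1;n|k;s)}$ in exactly the same way to obtain your relation $\Xi_n\cdot v=0$ (the paper's intermediate identity up to an overall sign $(-1)^n$), and then apply $\cX_t^+$ using Lemma \ref{Lemma cXt+ cXs-(p)h} and (L3). The only cosmetic differences are the reversed order of presentation and the separate (and correct) treatment of $n=0$.
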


\begin{proof} 
By the assumption $\cX_0^{-(n+1)} \cdot v=0$ and Proposition \ref{Prop comm rel cXt+b cXs-c}, 
we have 
\begin{align}
\label{0=cX+n cX-n+1 v}
\begin{split} 
0 
&= \cX_s^{+(n)} \cX_0^{-(n+1)} \cdot v 
\\
&= \Big( \cX_0^{-(n+1)} \cX_s^{+(n)} + \sum_{p=1}^n \sum_{k=0}^p \sum_{l=0}^{p-k} 
	(-1)^{k+l} \cX_0^{-(n+1;p|k;s)} \cJ_s^{\lan p - (k+l) \ran} \cX_s^{+(n;p|l;s)} \Big) \cdot v. 
\end{split}
\end{align}
By the definition, we have $\cX_s^{+(n;p|l;s)} = \sum_{\la \vdash l} \cX_s^{+(\la;s)} \cX_s^{+(n-p- \ell(\la))}$. 
Thus, by the definition of $\cX_s^{+(\la;s)}$ and the assumption $\cX_t^+ \cdot v =0$ ($t \geq 0$), 
we have  
\begin{align*}
\cX_s^{+(n;p|l;s)} \cdot v = \begin{cases} v & \text{ if } l=0 \text{ and } p=n, \\ 0 & \text{ otherwise.} \end{cases}
\end{align*}
Then \eqref{0=cX+n cX-n+1 v} implies that 
\begin{align*}
0 = \sum_{k=0}^n (-1)^k \cX_0^{-(n+1;n|k;s)} \cJ_s^{\lan n-k \ran} \cdot v.
\end{align*} 
By the definition, we have 
\begin{align*}
\cX_0^{-(n+1;n|k;s)} 
= \sum_{\la \vdash k} \cX_0^{-(\la;s)} \cX_0^{-(1 - \ell(\la))} 
= \begin{cases} 
		\cX_0^- & \text{ if } k=0, 
		\\
		\cX_0^{-((k);s)} & \text{ if } k \not=0.
	\end{cases}
\end{align*}
Thus, we have 
\begin{align*}
0= \cX_0^- \cJ_s^{\lan n \ran} \cdot v + \sum_{k=1}^n (-1)^k \cX_0^{-((k);s)} \cJ_s^{\lan n-k \ran} \cdot v. 
\end{align*}
By multiplying $\cX_t^+$ from left to this equation, we have 
\begin{align*}
0 
&= \cX_t^+ \cX_0^- \cJ_s^{\lan n \ran} \cdot v + \sum_{k=1}^n (-1)^k \cX_t^+ \cX_0^{-((k);s)} \cJ_s^{\lan n-k \ran} \cdot v 
\\
&= \sum_{k=0}^n (-1)^k 
	\Big( \sum_{w=0}^k \begin{pmatrix} k \\ w \end{pmatrix}(-Q)^w  \cJ_{t+ks+w}^{\lan 1 \ran}  \Big) 
	\cJ_s^{\lan n-k \ran} \cdot v, 
\end{align*}
where we use Lemma \ref{Lemma cXt+ cXs-(p)h} and the fact $\cX_t^+ \cJ_s^{\lan n-k \ran} \cdot v=0$. 
This implies the Lemma.
\end{proof}

This Lemma implies the following proposition 
which gives a necessary condition for $\bu$ such that $\cL(\bu)$ is finite dimensional. 


\begin{prop} 
\label{Prop ne cond rank1}
Let $M$ be a finite dimensional $U(\Fsl_2^{\lan Q \ran}[x])$-module. 
Take an element $v \in M$ satisfying 
\begin{align*}
\cX_t^+ \cdot v=0, \quad \cJ_t \cdot v = u_t v \quad (t \in \ZZ_{\geq 0}), 
\quad \cX_0^{-(n)} \cdot v \not=0 
\text{ and } 
\cX_0^{-(n+1)} \cdot v=0  
\end{align*}
for some $u_t \in \CC$ ($t \in \ZZ_{\geq 0}$) and $n \in \ZZ_{\geq 0}$.  
\begin{enumerate} 
\item 
If $Q=0$, we have $u_0=n$, and there exist $\g_1,\g_2, \dots, \g_n \in \CC$ such that 
\begin{align*}
u_t = p_t (\g_1, \g_2, \dots, \g_n) \quad (t >0),
\end{align*}
where $p_t(\g_1, \dots, \g_n)= \g_1^t + \g_2^t+ \dots + \g_n^t$. 

\item 
If $Q \not=0$, there exist $\b, \g_1,\g_2,\dots, \g_n \in \CC$ such that 
\begin{align*}
u_0 = n + \b \text{ and }  u_t = p_t (\g_1,\g_2,\dots, \g_n) + Q^{-t} \b \quad (t >0), 
\end{align*}
where $p_t(\g_1, \dots, \g_n)= \g_1^t + \g_2^t+ \dots + \g_n^t$. 
\end{enumerate}
\end{prop}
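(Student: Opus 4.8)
The plan is to turn every operator identity into a scalar statement about the eigenvalues $u_t$ and then to recognise the resulting recursions as the Newton identities relating power sums to elementary symmetric functions. Since $v$ is a simultaneous eigenvector for the commuting generators $\cJ_t$, every polynomial in them acts on $v$ by a scalar; write $J_s^{\lan p\ran}$ for the scalar by which $\cJ_s^{\lan p\ran}$ acts and $w_t=u_t-Qu_{t+1}$ for the scalar of $\cJ_t^{\lan 1\ran}=\cJ_t+(-Q)\cJ_{t+1}$. From the very shape of the recursion \eqref{Def cJ p}, Newton's identities show that the numbers $J_s^{\lan p\ran}$ are the elementary symmetric functions $e_p$ of a system whose power sums are $P_z^{(s)}=\sum_{w=0}^z\binom{z}{w}(-Q)^w u_{zs+w}$, so that the polynomial $\sum_{j=0}^n(-1)^jJ_s^{\lan j\ran}\lambda^{n-j}$ will carry the roots we are after. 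The two cases of the proposition are then handled by two different specialisations of Lemma \ref{Lemma sum J v}.

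For (i), first observe that when $Q=0$ the elements $\cX_0^{+},\cX_0^{-},\cJ_0$ span a copy of $\Fsl_2$; as $\cX_0^+\cdot v=0$, $\cX_0^{-(n)}\cdot v\neq0$ and $\cX_0^{-(n+1)}\cdot v=0$, the vector $v$ is a highest weight vector of an $(n+1)$-dimensional simple $\Fsl_2$-module, forcing $u_0=n$. Then I would set $s=1$ in Lemma \ref{Lemma sum J v}: at $Q=0$ every binomial sum collapses to its $w=0$ term, $w_t=u_t$, and the identity becomes the constant-coefficient recursion $u_{t+n}=\sum_{k=0}^{n-1}(-1)^{n-k+1}J_1^{\lan n-k\ran}u_{t+k}$, valid for all $t\ge0$, whose characteristic polynomial is $\sum_{j=0}^n(-1)^jJ_1^{\lan j\ran}\lambda^{n-j}=\prod_{i=1}^n(\lambda-\g_i)$ with $e_j(\g)=J_1^{\lan j\ran}$. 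Both $(u_t)$ and $(p_t(\g_1,\dots,\g_n))$ satisfy this recursion (the latter by Newton), and Newton's identities for the first $n$ power sums together with $u_0=n=p_0(\g)$ match the initial data; hence $u_t=p_t(\g_1,\dots,\g_n)$ for all $t$.

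For (ii) the clean specialisation is $s=0$, because then the quantities governing $J_0^{\lan p\ran}$ are the linear expressions $1-Q\g_i$ rather than the quadratic $\g_i^s(1-Q\g_i)$, which removes a square-root ambiguity. Taking $s=0$ in Lemma \ref{Lemma sum J v} and passing to the generating function $W(z)=\sum_{t\ge0}w_t z^t$, the identity forces $W(z)$ to be a rational function whose denominator divides the degree-$n$ polynomial $\sum_{j=0}^n(-1)^jJ_0^{\lan j\ran}(z-Q)^{n-j}z^j$, whose roots are precisely $z=\g_i^{-1}$, where the $1-Q\g_i$ are the roots of $\sum_j(-1)^jJ_0^{\lan j\ran}\lambda^{n-j}$. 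Thus $w_t=\sum_i a_i\g_i^t$ is a combination of geometric sequences with ratios $\g_i$, and solving the definitional relation $u_t-Qu_{t+1}=w_t$ puts $u$ into the same geometric form plus a multiple of $Q^{-t}$. To pin the coefficients I compare the two expressions for the power sums: on one hand $P_z^{(0)}=\sum_i(1-Q\g_i)^z$ by construction of $J_0^{\lan p\ran}$, on the other hand substituting the geometric form of $u$ into $P_z^{(0)}=\sum_{w}\binom{z}{w}(-Q)^w u_w$ gives $\sum_i\bigl(a_i/(1-Q\g_i)\bigr)(1-Q\g_i)^z$ for $z\ge1$, since the $Q^{-1}$-mode contributes $(1-Q\cdot Q^{-1})^z=0$. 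Comparing yields $a_i=1-Q\g_i$, so $w_t=\sum_i(1-Q\g_i)\g_i^t$; a particular solution of $u_t-Qu_{t+1}=w_t$ is $p_t(\g_1,\dots,\g_n)$ and the homogeneous solutions are the multiples of $Q^{-t}$, whence $u_t=p_t(\g_1,\dots,\g_n)+\b Q^{-t}$ with $\b=u_0-n$.

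The crux, and the main obstacle, is exactly the coefficient bookkeeping in case (ii): one must see that the free parameter $\b$ corresponds to the mode $\rho=Q^{-1}$, which is invisible to every $\cJ_s^{\lan p\ran}$ because the factor $1-Q\rho$ vanishes there, so that the power-sum data $P_z^{(s)}$ detect only the $\g_i$ and never constrain $\b$. The same vanishing forces the coefficient of each $\g_i^t$ to be exactly $1-Q\g_i$ and rules out spurious polynomial-in-$t$ factors arising from repeated $\g_i$. Making this cancellation fully rigorous — including the degenerate subcases where some $\g_i$ equals $0$ or $Q^{-1}$, or the $\g_i$ are not distinct — is the delicate part of the argument, while everything else reduces to Newton's identities and elementary linear-recursion theory.
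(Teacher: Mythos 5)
Your case (i) is essentially the paper's argument: the copy of $\Fsl_2$ spanned by $\cX_0^{\pm},\cJ_0$ gives $u_0=n$, Newton's identities applied to the defining recursion \eqref{Def cJ p} at $s=1$, $Q=0$ identify the scalars of $\cJ_1^{\lan k\ran}$ with $e_k(\g_1,\dots,\g_n)$ and force $u_k=p_k(\g)$ for $k\le n$, and Lemma \ref{Lemma sum J v} with $s=1$ propagates this to all $t$; that part is sound.

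In case (ii), however, there is a genuine gap, and you have named it yourself without closing it. Your argument decomposes $w_t=u_t-Qu_{t+1}$ into geometric modes $a_i\g_i^t$ and determines the coefficients $a_i=1-Q\g_i$ by a Vandermonde comparison of the power sums $P_z^{(0)}=\sum_i\t_i^z$. This only works when the $\t_i=1-Q\g_i$ are distinct and nonzero. But the $\g_i$ are not free: they are the roots of the degree-$n$ polynomial whose coefficients are the scalars of $\cJ_0^{\lan j\ran}$ on $v$, so nothing prevents multiplicities or the value $\t_i=0$ (i.e.\ $\g_i=Q^{-1}$). The latter is precisely the dangerous case: a mode $aQ^{-t}$ in $w_t$ resonates with the homogeneous solution of $u_t-Qu_{t+1}=w_t$ and produces a term proportional to $tQ^{-t}$ in $u_t$, which would contradict the asserted form $u_t=p_t(\g)+Q^{-t}\b$; ruling it out requires showing $a=0$, and the $z=1$ power-sum equation that would do this is exactly the one your ``$(1-Q\cdot Q^{-1})^z=0$'' cancellation no longer controls (the $tQ^{-t}$ term contributes to $P_1^{(0)}$ but not to $P_z^{(0)}$ for $z\ge 2$, so the linear system degenerates). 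Repeated $\t_i$ cause a similar breakdown of the Vandermonde step. The paper avoids all of this by never diagonalizing: it proves $\cJ_0^{\lan k\ran}\cdot v=e_k(\t_1,\dots,\t_n)v$ by induction using \eqref{e sum e p}, and then runs an induction on $t$ through Lemma \ref{Lemma sum J v} at $s=0$ using the weighted Newton identity \eqref{sum p b e = pb} with weights $b_i=\g_i^{t-n-1}$ — identities among symmetric polynomials that are valid verbatim for repeated, zero, or resonant $\g_i$. To make your route rigorous you would have to carry out the confluent (Jordan-block) version of the mode analysis and the separate argument killing the $Q^{-1}$ mode; as written, the proof is incomplete exactly where the proposition is most delicate.
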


\begin{proof} 
(\roi). 
Assume that $Q=0$. 
Then, $\Fsl_2^{\lan 0 \ran}[x]$ coincides with   the current Lie algebra $\Fsl_2[x]$ of $\Fsl_2$. 
Moreover, the Lie subalgebra of $\Fsl_2 [x]$ generated by $\cX_0^{\pm}$ and $\cJ_0$ 
is isomorphic to $\Fsl_2$. 
Thus, by the representation theory of $\Fsl_2$, 
we have $u_0=n$. 

For $u_1,\dots, u_n$, there exist $\g_1, \dots , \g_n \in \CC$ such that 
\begin{align}
\label{uk 1 to n Q=0}
u_k = p_k (\g_1,\dots, \g_n) 
\text{ for } k=1,\dots, n 
\end{align} 
by Lemma \ref{Lemma solution equations}. 

By the definition, we have 
\begin{align}
\label{cJ1 k} 
\cJ_1^{\lan k \ran} = \frac{1}{k} \sum_{z=1}^k (-1)^{z-1} \cJ_{z} \cJ_1^{\lan k-z \ran}
\end{align} 
since we assume $Q=0$. 
By the induction on $k$ together with \eqref{uk 1 to n Q=0}, \eqref{cJ1 k} and \eqref{e sum e p}, 
we can show that 
\begin{align}
\label{cJi k ek Q=0}
\cJ_1^{\lan k \ran} \cdot v = e_k (\g_1, \dots, \g_n) v 
\text{ for } k=1,\dots, n, 
\end{align} 
where $e_k (\g_1, \dots, \g_n )= \sum_{1 \leq i_1 < i_2 < \dots < i_k \leq n} \g_{i_1} \g_{i_2} \dots \g_{i_k}$. 

By the induction on $t$, we prove that 
\begin{align}
\label{ut = pt Q=0}
u_t = p_t(\g_1,\dots, \g_n) \quad (t>0). 
\end{align}
If $t \leq n$, 
\eqref{ut = pt Q=0} follows from \eqref{uk 1 to n Q=0}. 
If $t >n$, 
by Lemma \ref{Lemma sum J v} in the case where $s=1$, 
we have 
\begin{align*}
u_t v 
=\cJ_{(t-n)+n} \cdot v 
= \sum_{k=0}^{n-1} (-1)^{n-k+1} \cJ_{(t-n)+k} \cJ_1^{\lan n- k \ran} \cdot v. 
\end{align*}
By the assumption of the induction together with \eqref{cJi k ek Q=0} and \eqref{sum p e = p}, 
we have 
\begin{align*}
u_t v 
= \sum_{k=0}^{n-1} (-1)^{n-k+1} p_{t-n+k}(\g_1, \dots, \g_n) e_{n-k} (\g_1,\dots, \g_n) v
= p_t (\g_1,\dots, \g_n) v. 
\end{align*}


(\roii). 
Assume that $Q \not=0$. 
For $u_0,u_1,\dots, u_n$, 
there exist $\b, \g_1,\dots, \g_n \in \CC$ such that 
\begin{align}
\label{uk 0 to n Q NOT=0}
u_0 = n+\b \text{ and } u_k = p_k (\g_1,\dots, \g_n) +  Q^{-k} \b  \text{ for } k=1,\dots, n 
\end{align}
by Lemma \ref{Lemma solution equations}. 

By the induction on $k$, 
we prove that 
\begin{align}
\label{cJ0k ek Q NOT=0} 
\cJ_0^{\lan k \ran} \cdot v = e_k (\t_1,\t_2,\dots, \t_n) v 
\text{ for } k=1,\dots, n, 
\end{align}
where $\t_i= 1 - Q \g_i$ ($1\leq i \leq n$) 
and $e_k (\t_1,\dots, \t_n) = \sum_{1 \leq i_1 < i_2 < \dots <i_k \leq n} \t_{i_1} \t_{i_2} \dots \t_{i_k}$. 

In the case where $k=1$, 
we have $\cJ_0^{\lan 1 \ran} \cdot v = (\cJ_0 + (-Q) \cJ_1) \cdot v $. 
Then we have $\cJ_0^{\lan 1 \ran} \cdot v = e_1(\t_1,\dots, \t_n) v$ by \eqref{uk 0 to n Q NOT=0}. 

In the case where $1 < k \leq n$, by the definition, we have 
\begin{align*}
\cJ_0^{\lan k \ran} \cdot v 
&= \frac{1}{k} \sum_{z=1}^k (-1)^{z-1} \Big( \sum_{w=0}^z \begin{pmatrix} z \\ w \end{pmatrix} (-Q)^w \cJ_w \Big) 
	\cJ_{0}^{\lan k-z \ran} \cdot v. 
\end{align*}
Applying the assumption of the induction to the right-hand side, we have 
\begin{align}
\label{cJ0 k sum Q NOT=0}
\cJ_0^{\lan k \ran} \cdot v 
=  \frac{1}{k} \sum_{z=1}^k (-1)^{z-1}  e_{k-z}(\t_1,\dots, \t_n) 
	\Big( \sum_{w=0}^z \begin{pmatrix} z \\ w \end{pmatrix} (-Q)^w \cJ_w \Big)  \cdot v, 
\end{align}
where we note that $\cJ_0^{\lan 0 \ran} = e_0(\t_1,\dots, \t_n)=1$. 
On the other hand,  by \eqref{uk 0 to n Q NOT=0}, we have 
\begin{align}
\label{sum Jw p Q NOT=0}
\sum_{w=0}^z \begin{pmatrix} z \\ w \end{pmatrix} (-Q)^w \cJ_w \cdot v = p_z (\t_1,\dots, \t_n) v 
\quad (1\leq z \leq k \leq n), 
\end{align}
where we note that $\sum_{w=0}^z \begin{pmatrix} z \\ w \end{pmatrix} (-1)^w =0$. 
By \eqref{cJ0 k sum Q NOT=0} and \eqref{sum Jw p Q NOT=0} together with \eqref{e sum e p}, 
we have \eqref{cJ0k ek Q NOT=0}. 

By the induction on $t$, we prove that 
\begin{align}
\label{ut pt Q NOT=0}
u_t = p_t (\g_1,\dots, \g_n) + Q^{-t} \b \quad (t >0).
\end{align}
If $t \leq n$, \eqref{ut pt Q NOT=0} follows from \eqref{uk 0 to n Q NOT=0}. 
If $t >n$, by Lemma \ref{Lemma sum J v} in the case where $s=0$, we have 
\begin{align*}
\sum_{w=0}^n \begin{pmatrix} n \\ w \end{pmatrix} (-Q)^w \cJ_{(t-n-1) +w}^{\lan 1 \ran} \cdot v 
= \sum_{k=0}^{n-1} (-1)^{n-k+1} 
	\Big( \sum_{w=0}^k \begin{pmatrix} k \\ w \end{pmatrix} (-Q)^w \cJ_{(t-n-1) +w}^{\lan 1 \ran}\Big) 
	\cJ_0^{\lan n-k \ran} \cdot v. 
\end{align*}
By \eqref{cJ0k ek Q NOT=0} , we have 
\begin{align}
\label{sum J = e J Q NOT=0}
\begin{split}
&\sum_{w=0}^n \begin{pmatrix} n \\ w \end{pmatrix} (-Q)^w \cJ_{(t-n-1) +w}^{\lan 1 \ran} \cdot v 
\\
&= \sum_{k=0}^{n-1} (-1)^{n-k+1} e_{n-k} (\t_1,\dots, \t_n)
	\Big( \sum_{w=0}^k \begin{pmatrix} k \\ w \end{pmatrix} (-Q)^w \cJ_{(t-n-1) +w}^{\lan 1 \ran}\Big)  \cdot v. 
\end{split}
\end{align}
On the other hand, 
for $k \geq 0$, 
we have 
\begin{align}
\label{sum cJ1 t-n-1+w}
\sum_{w=0}^k \begin{pmatrix} k \\ w \end{pmatrix} (-Q)^w \cJ_{(t-n-1) +w}^{\lan 1 \ran} 
= \sum_{w=0}^{k+1} \begin{pmatrix} k+1 \\ w \end{pmatrix} (-Q)^w \cJ_{(t-n-1) +w} 
\end{align}
since $\cJ_{(t-n-1)+w}^{\lan 1 \ran} = \cJ_{(t-n-1)+w} + (-Q) \cJ_{(t-n-1) +w +1}$. 
Then, by \eqref{sum cJ1 t-n-1+w} and the assumption of the induction, we have 
\begin{align}
\label{sum 0 to n J}
\begin{split}
&\sum_{w=0}^n \begin{pmatrix} n \\ w \end{pmatrix} (-Q)^w \cJ_{(t-n-1)+w}^{\lan 1 \ran} \cdot v 
\\
&= (-Q)^{n+1} \cJ_t \cdot v 
	+ \sum_{w=0}^n \begin{pmatrix} n+1 \\ w \end{pmatrix} (-Q)^w 
		\big( p_{(t-n-1) +w} (\g_1,\dots, \g_n) + Q^{-((t-n-1)+w)} \b \big) v
\end{split} 
\end{align}
and 
\begin{align}
\label{sum 0 to k J}
\begin{split}
&\sum_{w=0}^k \begin{pmatrix} k \\ w \end{pmatrix} (-Q)^w \cJ_{(t-n-1)+w}^{\lan 1 \ran} \cdot v 
\\
&= \sum_{w=0}^{k+1} \begin{pmatrix} k+1 \\ w \end{pmatrix} (-Q)^w 
	\big( p_{(t-n-1) +w} (\g_1,\dots, \g_n) + Q^{-((t-n-1)+w)} \b \big) v 
\end{split}
\end{align}
for $k=0, 1, \dots, n-1$. 
Moreover, by the direct calculations, we have 
\begin{align}
\label{sum p b = p}
\sum_{w=0}^{k+1} \begin{pmatrix} k+1 \\ w \end{pmatrix} (-Q)^w 
	\big( p_{(t-n-1) +w} (\g_1,\dots, \g_n) + Q^{-((t-n-1)+w)} \b \big)
&= p_{k+1}^{(\Bg)} (\t_1, \dots, \t_n) 
\end{align}
for $k \geq 0$, 
where 
$p_{k+1}^{(\Bg)}(\t_1,\dots, \t_n) = \g_1^{t-n-1} \t_1^{k+1} + \g_2^{t-n-1} \t_2^{k+1} + \dots + \g_n^{t-n-1} \t_n^{k+1}$. 
Then, by \eqref{sum J = e J Q NOT=0}, \eqref{sum 0 to n J}, \eqref{sum 0 to k J} and \eqref{sum p b = p}, 
we have 
\begin{align*}
&(-Q)^{n+1} \cJ_t \cdot v - (-Q)^{n+1} \big( p_t (\g_1,\dots, \g_n) + Q^{-t} \b \big) v 
	+ p_{n+1}^{(\Bg)} (\t_1,\dots, \t_n) 
\\
&= \sum_{k=0}^{n-1} (-1)^{n-k+1} e_{n-k} (\t_1,\dots, \t_n) p_{k+1}^{(\Bg)} (\t_1,\dots, \t_n). 
\end{align*}
Applying \eqref{sum p b e = pb} to the right-hand side, we have 
\begin{align*}
\cJ_t \cdot v = \big( p_t (\g_1,\dots, \g_n) + Q^{-t} \b \big) v. 
\end{align*}
\end{proof}


\para 
By Lemma \ref{Lemma 1-dim rep rank1} and Proposition \ref{Prop ne cond rank1}, 
we see that 
the highest weight $\bu=(u_t)_{t \geq 0}$ of a  simple highest weight $U(\Fsl_2^{\lan Q \ran}[x])$-module $\cL(\bu)$ 
has the form 
\begin{align}
\label{h.w. rank1}
u_0 = \begin{cases} 
	n & \text{ if } Q=0 
	\\ n+\b &\text{ if } Q \not=0, 
	\end{cases}  
\quad 
u_t = \begin{cases} 
	p_t(\g_1,\g_2, \dots, \g_n) &\text{ if } Q=0, 
	\\ p_t (\g_1,\g_2,\dots, \g_n) + Q^{-t} \b &\text{ if } Q \not=0 
	\end{cases} 
\end{align}
for some $n \in \ZZ_{\geq 0}$ and $\b, \g_1,\g_2,\dots, \g_n \in \CC$ 
if $\cL(\bu)$ is finite dimensional.  

Let $\CC[x]$ be the polynomial ring over $\CC$ with the indeterminate variable $x$, 
and let $\CC[x]_{\mo}$ be the subset of $\CC[x]$ consisting of monic polynomials. 
We define the set $\CC[x]^{\lan Q \ran}_{\mo}$ by 
\begin{align*}
\CC[x]^{\lan Q \ran}_{\mo} = 
	\begin{cases}
	\CC[x]_{\mo} & \text{ if } Q=0, 
	\\
	\{ \vf \in \CC[x]_{\mo}  \mid Q^{-1} \text{ is not a root of } \vf\} & \text{ if } Q \not=0.
	\end{cases}
\end{align*}
Recall that 
\begin{align*}
\BB^{\lan Q \ran} = \begin{cases}
		\{0 \} & \text{ if } Q=0, 
		\\
		\CC & \text{ if } Q\not=0. 
		\end{cases} 
\end{align*}
We define the map 
\begin{align}
\label{map u}
\CC[x]^{\lan Q \ran}_{\mo} \times \BB^{\lan Q \ran} \ra \prod_{t \geq 0} \CC, 
\quad 
(\vf, \b) \mapsto \bu^{\lan Q \ran} (\vf, \b) = ( \bu^{\lan Q \ran} (\vf,\b)_t)_{t \geq 0}
\end{align}
by 
\begin{align*}
\bu^{\lan Q \ran} (\vf, \b)_t = 
	\begin{cases} 
		\deg \vf + \b & \text{ if } t=0, 
		\\
		p_t(\g_1,\g_2,\dots, \g_n) & \text{ if } t>0 \text{ and } Q =0, 
		\\
		p_t(\g_1,\g_2,\dots, \g_n) + Q^{-t} \b & \text{ if } t>0 \text{ and } Q \not=0, 
	\end{cases}
\end{align*}
when $\vf=(x-\g_1)(x-\g_2) \dots (x-\g_n)$. 
We see that the map \eqref{map u} is injective, 
and it gives a bijection between $\CC[x]^{\lan Q \ran}_{\mo} \times \BB^{\lan Q \ran}$ 
and the set of highest weight $\bu =(u_t)_{\geq 0}$ satisfying \eqref{h.w. rank1}, 
where we note that 
\begin{align*} 
p_t(\g_1, \dots, \g_n, \underbrace{Q^{-1}, \dots, Q^{-1}}_{k} \, ) + Q^{-t} \b 
= p_t (\g_1,\dots, \g_n) + Q^{-t}(\b +k). 
\end{align*} 

Then we have the following corollary of Lemma \ref{Lemma 1-dim rep rank1} and Proposition \ref{Prop ne cond rank1}. 


\begin{cor}
\label{Cor ne finite rank1}
Any finite dimensional simple $U(\Fsl_2^{\lan Q \ran}[x])$-module 
is isomorphic to $\cL(\bu^{\lan Q \ran} (\vf, \b))$ for some $(\vf, \b) \in \CC[x]^{\lan Q \ran}_{\mo} \times \BB^{\lan Q \ran}$. 
Moreover, 
$\cL(\bu^{\lan Q \ran} (\vf, \b)) \not\cong \cL(\bu^{\lan Q \ran} (\vf', \b'))$ if $(\vf, \b) \not= (\vf', \b')$. 
\end{cor}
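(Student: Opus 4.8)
The plan is to assemble the statement from three ingredients already in place: the highest weight description of finite dimensional simple modules (Proposition \ref{Prop simple slmx HW}), the necessary condition on highest weights (Proposition \ref{Prop ne cond rank1}), and the bijectivity of the parametrization map \eqref{map u}. First I would recall that by Proposition \ref{Prop simple slmx HW} every finite dimensional simple $U(\Fsl_2^{\lan Q \ran}[x])$-module is isomorphic to $\cL(\bu)$ for some highest weight $\bu = (u_t)_{t \geq 0} \in \prod_{t \geq 0} \CC$, so it suffices to determine which $\bu$ arise and to show that the $(\vf, \b)$ parametrization exhausts them bijectively.

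For the existence assertion, I would show every such $\bu$ has the form \eqref{h.w. rank1}. Given a finite dimensional $\cL(\bu)$, Lemma \ref{Lemma finite dim module} furnishes a vector $v$ with $\cX_t^+ \cdot v = 0$ and $\cJ_t \cdot v = u_t v$; finite dimensionality forces a largest $n$ with $\cX_0^{-(n)} \cdot v \neq 0$ and $\cX_0^{-(n+1)} \cdot v = 0$, so Proposition \ref{Prop ne cond rank1} applies and produces exactly \eqref{h.w. rank1}, with $\b \in \CC = \BB^{\lan Q \ran}$ when $Q \neq 0$ in accordance with the $1$-dimensional modules classified in Lemma \ref{Lemma 1-dim rep rank1}. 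Since the map \eqref{map u} is onto the set of weights of the form \eqref{h.w. rank1}, I obtain $(\vf, \b) \in \CC[x]_{\mo}^{\lan Q \ran} \times \BB^{\lan Q \ran}$ with $\bu = \bu^{\lan Q \ran}(\vf, \b)$, and hence $\cL(\bu) \cong \cL(\bu^{\lan Q \ran}(\vf, \b))$.

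For the distinctness assertion, I would first note that two simple highest weight modules with different highest weights cannot be isomorphic: by the weight space decomposition \eqref{wt sp decom h.w.} the top weight space is one dimensional, and any isomorphism carries a highest weight vector (the vector, unique up to scalar, annihilated by $\bn^+$ and lying in the top weight space) to one of the same kind, hence preserves the scalars $u_t$ by which the $\cJ_t$ act. Thus $\cL(\bu) \cong \cL(\bu')$ implies $\bu = \bu'$. Combined with the injectivity of \eqref{map u}, distinct pairs $(\vf, \b) \neq (\vf', \b')$ yield $\bu^{\lan Q \ran}(\vf, \b) \neq \bu^{\lan Q \ran}(\vf', \b')$, whence $\cL(\bu^{\lan Q \ran}(\vf, \b)) \not\cong \cL(\bu^{\lan Q \ran}(\vf', \b'))$.

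The substantive point underlying everything is the injectivity of \eqref{map u} when $Q \neq 0$, i.e. separating the contribution $Q^{-t} \b$ from the power sums $p_t(\g_1, \dots, \g_n)$. As the identity $p_t(\g_1, \dots, \g_n, \underbrace{Q^{-1}, \dots, Q^{-1}}_{k}) + Q^{-t} \b = p_t(\g_1, \dots, \g_n) + Q^{-t}(\b + k)$ shows, permitting $Q^{-1}$ as a root of $\vf$ would conflate $\b$ with the multiplicity of that root, so restricting $\vf$ to $\CC[x]_{\mo}^{\lan Q \ran}$ is exactly what removes the ambiguity. Concretely I would recover the data from the generating function $\sum_{t \geq 0} u_t z^t = \sum_{i=1}^n (1 - \g_i z)^{-1} + \b (1 - Q^{-1} z)^{-1}$, whose pole at $z = Q$ is disjoint from the poles at $z = \g_i^{-1}$ precisely because no $\g_i$ equals $Q^{-1}$; this pins down $\b$ and the multiset $\{\g_1, \dots, \g_n\}$, equivalently $\vf$. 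The case $Q = 0$ is easier, since then $u_0 = n = \deg \vf$ and the power sums $u_t$ ($t > 0$) determine the roots of $\vf$ via Newton's identities.
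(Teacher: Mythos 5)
Your proposal is correct and follows essentially the same route as the paper: reduce to simple highest weight modules via Proposition \ref{Prop simple slmx HW}, invoke Proposition \ref{Prop ne cond rank1} to pin the highest weight to the form \eqref{h.w. rank1}, and use the bijectivity of the map \eqref{map u} (which is exactly why $Q^{-1}$ is excluded as a root) to get both existence and distinctness. Your generating function argument and the remark that isomorphic simple highest weight modules have equal highest weights merely make explicit two points the paper asserts without elaboration.
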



\para
Recall the evaluation modules from the paragraph \ref{para evaluation sl2}. 
Let $L(2)$ be the two-dimensional simple $U(\Fsl_2)$-module, and $v_0 \in L(2)$ be a highest weight vector. 
We consider the evaluation module $L(2)^{\ev_\g}$ at $\g \in \CC$, 
then we see that 
\begin{align*}
\cX_t^+ \cdot v_0=0, \quad \cJ_t \cdot v_0 = \g^t v_0 
\quad (t \geq 0)
\end{align*}
in $L(2)^{\ev_{\g}}$. 

For $(\vf =(x-\g_1)(x-\g_2)\dots(x-\g_n), \b) \in \CC[x]^{\lan Q \ran}_{\mo} \times \BB^{\lan Q \ran}$, 
we consider the $U(\Fsl_2^{\lan Q \ran}[x])$-module 
\begin{align*}
\cN_{(\vf, \b)}=
L(2)^{\ev_{\g_1}} \otimes L(2)^{\ev_{\g_2}} \otimes \dots \otimes  L(2)^{\ev_{\g_n}} \otimes \cL^{\b},
\end{align*}
where $\cL^{\b}$ is the $1$-dimensional $U(\Fsl_2^{\lan Q \ran}[x])$-module given in the paragraph \ref{1-dim rep sl2}. 
Let $v_0^{(k)} \in L(2)^{\ev_{\g_k}}$ ($1 \leq k \leq n$) be a highest weight vector, and $\cL^{\b} = \CC w_0$.  
Put $v_{(\vf,\b)} = v_0^{(1)} \otimes v_0^{(2)} \otimes \dots \otimes v_0^{(n)} \otimes w_0$. 
Then, for $t \geq 0$,  we have 
\begin{align}
\label{Q1}
\cX_t^+ \cdot v_{(\vf,\b)} =0 
\end{align}
and 
\begin{align}
\label{Q2}
\cJ_t \cdot v_{(\vf,\b)}= 
	\begin{cases}
		(n+\b) v_{(\vf,\b)} & \text{ if } t=0, 
		\\
		p_t(\g_1, \g_2,\dots, \g_n) v_{(\vf,\b)}
			& \text{ if } t>0 \text{ and } Q=0, 
		\\
		(p_t(\g_1, \g_2,\dots, \g_n) + Q^{-t}\b) v_{(\vf,\b)}
			& \text{ if } t>0 \text{ and } Q\not=0. 
	\end{cases}
\end{align}
Let $\cN'_{(\vf, \b)}$ be the $U(\Fsl_2^{\lan Q \ran}[x])$-submodule of $\cN_{(\vf,\b)}$ generated by $v_{(\vf,\b)}$. 
Then \eqref{Q1} and \eqref{Q2} imply that 
$\cN'_{(\vf,\b)}$ is a highest weight module of highest weight $\bu^{\lan Q \ran}(\vf,\b)$, 
and $\cN'_{(\vf,\b)} / \rad \cN'_{(\vf,\b)}$ is isomorphic to the simple highest weight module 
$\cL (\bu^{\lan Q \ran}(\vf,\b))$. 
From the construction, 
$\cL(\bu^{\lan Q \ran} (\vf,\b)) \cong \cN'_{(\vf,\b)} / \rad \cN'_{(\vf,\b)}$ is finite dimensional 
for each $(\vf,\b) \in \CC[x]^{\lan Q \ran}_{\mo} \times \BB^{\lan Q \ran}$. 
Combining with  Corollary \ref{Cor ne finite rank1}, 
we have the following classification of finite dimensional simple $U(\Fsl_2^{\lan Q \ran}[x])$-modules. 


\begin{thm}
\label{Thm class finite simple rank1}
For $(\vf,\b) \in \CC[x]^{\lan Q \ran}_{\mo} \times \BB^{\lan Q \ran}$, 
the highest weight simple $U(\Fsl_2^{\lan Q \ran}[x])$-module $\cL(\bu^{\lan Q \ran} (\vf,\b))$ 
of highest weight $\bu^{\lan Q \ran}(\vf,\b)$ 
is finite dimensional, and we have that 
\begin{align*}
\cL(\bu^{\lan Q \ran} (\vf,\b)) \cong \cL(\bu^{\lan Q \ran} (\vf',\b')) 
\LRa 
(\vf, \b) = (\vf', \b') 
\end{align*}
for $(\vf, \b), (\vf', \b') \in \CC[x]^{\lan Q \ran}_{\mo} \times \BB^{\lan Q \ran}$. 
Moreover, 
\begin{align*}
\{ \cL(\bu^{\lan Q \ran} (\vf, \b)) \mid (\vf, \b) \in \CC[x]^{\lan Q \ran}_{\mo} \times \BB^{\lan Q \ran} \}  
\end{align*}
gives a complete set of isomorphism classes of finite dimensional simple $U(\Fsl_2^{\lan Q \ran}[x])$-modules. 
\end{thm}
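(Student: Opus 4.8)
The plan is to read off the theorem by combining the necessary condition already isolated in Corollary~\ref{Cor ne finite rank1} with an explicit realization supplying the missing finite dimensionality; essentially no fresh computation is needed beyond invoking the preceding results. I would organize the proof into the three assertions in the statement: finite dimensionality of each listed module, the isomorphism criterion, and completeness.

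First I would verify that every $\cL(\bu^{\lan Q \ran}(\vf,\b))$ in the list is finite dimensional. Writing $\vf=(x-\g_1)(x-\g_2)\cdots(x-\g_n)$, I would use the module $\cN_{(\vf,\b)}=L(2)^{\ev_{\g_1}}\otimes L(2)^{\ev_{\g_2}}\otimes\cdots\otimes L(2)^{\ev_{\g_n}}\otimes\cL^{\b}$ constructed immediately before the theorem. Being a tensor product of finitely many finite dimensional $U(\Fsl_2^{\lan Q \ran}[x])$-modules (each evaluation factor $L(2)^{\ev_{\g_k}}$ is two-dimensional and $\cL^{\b}$ is one-dimensional), $\cN_{(\vf,\b)}$ is finite dimensional. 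The identities \eqref{Q1} and \eqref{Q2} exhibit the cyclic submodule $\cN'_{(\vf,\b)}$ generated by $v_{(\vf,\b)}$ as a highest weight module of highest weight $\bu^{\lan Q \ran}(\vf,\b)$, so its unique simple quotient $\cN'_{(\vf,\b)}/\rad\cN'_{(\vf,\b)}$ is exactly $\cL(\bu^{\lan Q \ran}(\vf,\b))$; finite dimensionality of the ambient $\cN_{(\vf,\b)}$ then forces $\cL(\bu^{\lan Q \ran}(\vf,\b))$ to be finite dimensional.

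Next I would settle the isomorphism criterion and completeness. The implication $(\vf,\b)=(\vf',\b')\Rightarrow\cL(\bu^{\lan Q \ran}(\vf,\b))\cong\cL(\bu^{\lan Q \ran}(\vf',\b'))$ is immediate, since equal parameters yield the identical highest weight and, by Proposition~\ref{Prop simple slmx HW}, a simple highest weight module is determined up to isomorphism by its highest weight. For the converse, together with completeness, I would simply quote Corollary~\ref{Cor ne finite rank1}: it asserts both that every finite dimensional simple $U(\Fsl_2^{\lan Q \ran}[x])$-module is isomorphic to some $\cL(\bu^{\lan Q \ran}(\vf,\b))$ and that distinct pairs $(\vf,\b)$ give non-isomorphic modules, the latter resting on the injectivity of the parametrization \eqref{map u} and on the highest weight being an isomorphism invariant. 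Combining the finite dimensionality of the first step with these two facts shows that the displayed family is precisely the set of isomorphism classes of finite dimensional simple modules, with no repetitions.

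Finally, I would note where the genuine content lies: it is not in this theorem, which is a synthesis, but in the earlier Proposition~\ref{Prop ne cond rank1} (packaged into Corollary~\ref{Cor ne finite rank1}). That proposition does the real work of extracting the scalars $\g_1,\dots,\g_n$ and $\b$ from the $\cJ$-eigenvalues $(u_t)_{t\geq0}$, via the power-sum versus elementary-symmetric identities and the intricate commutator formulas of the rank-one calculus in \S4; the tensor-product construction of the first step is then what upgrades those necessary conditions to sufficiency. Accordingly, I expect no real obstacle in the theorem itself—the assembly is purely formal—and the only care required is to match the realization (finite dimensionality) against the necessary condition so that the two parametrizations coincide exactly on $\CC[x]^{\lan Q \ran}_{\mo}\times\BB^{\lan Q \ran}$.
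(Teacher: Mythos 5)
Your proposal is correct and follows essentially the same route as the paper: the paper likewise establishes finite dimensionality by realizing $\cL(\bu^{\lan Q \ran}(\vf,\b))$ as the simple head of the cyclic submodule $\cN'_{(\vf,\b)}$ of the finite dimensional tensor product $\cN_{(\vf,\b)}$ of evaluation modules with $\cL^{\b}$, and then combines this with Corollary \ref{Cor ne finite rank1} for completeness and the non-isomorphism of modules attached to distinct parameters. No gaps.
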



\remark 
\label{Remark ev not simple}
If $Q \not=0$, the evaluation module $L(2)^{\ev_{Q^{-1}}}$ at $Q^{-1}$ is not simple. 
Recall that $L(2)$ is the two dimensional simple $U(\Fsl_2)$-module with a highest weight vector $v_0$. 
Put $v_1 = f \cdot v_0$. 
Then we see that $U(\Fsl_2^{\lan Q \ran}[x]) \cdot v_1 = \CC v_1 $ 
is a proper $U(\Fsl_2^{\lan Q \ran}[x])$-submodule of $L(2)^{\ev_{Q^{-1}}}$. 
Moreover, 
we have 
$L(2)^{\ev_{Q^{-1}}}/ \CC v_1\cong \cL^1$ 
and 
$\CC v_1 \cong \cL^{-1}$ 
as $U(\Fsl_2^{\lan Q \ran}[x])$-modules.



\section{Finite dimensional simple $U(\Fsl_m^{\lan \bQ \ran}[x])$-modules}
In this section, we classify the finite dimensional simple $U(\Fsl_m^{\lan \bQ \ran}[x])$-modules. 
By Proposition \ref{Prop simple slmx HW}, 
any finite dimensional simple $U(\Fsl_m^{\lan \bQ \ran}[x])$-module is isomorphic to 
the simple highest weight module $\cL(\bu)$ of highest weight $\bu =(u_{i,t})\in \prod_{i=1}^{m-1} \prod_{t \geq 0} \CC$. 
Thus, it is enough to classify the highest weight $\bu$ such that $\cL(\bu)$ is finite dimensional.  

\para 
{\bf $1$-dimensional representations.} 
First, we consider the $1$-dimensional representations of $\Fsl_m^{\lan Q \ran}[x]$. 
For each $i=1,2,\dots,m-1$, by checking the defining relations,  we have the homomorphism of algebras 
\begin{align} 
\label{inj sl2x to slmx}
\iota_i : U(\Fsl_2^{\lan Q_i \ran}[x]) \ra U(\Fsl_m^{\lan \bQ \ran}[x]) 
\text{ by } \cX_t^{\pm} \mapsto \cX_{i,t}^{\pm}, \,\, \cJ_t \mapsto \cJ_{i,t} \quad (t \geq 0).
\end{align}

Let $L= \CC v$ be a $1$-dimensional $U(\Fsl_m^{\lan \bQ \ran}[x])$-module. 
For each $i=1,2,\dots,m-1$, 
when we regard $L$ as a $U (\Fsl_2^{\lan Q_i \ran}[x])$-module through the homomorphism $\iota_i$, 
we see that $L$ is isomorphic to $\cL^{\b_i}$ for some $\b_i \in \BB^{\lan Q_i \ran}$ by Lemma \ref{Lemma 1-dim rep rank1}. 
Thus, we have 
\begin{align}
\label{1 dim slmx}
\cX_{i,t}^{\pm} \cdot v =0, 
\quad 
\cJ_{i,t} \cdot v = 
	\begin{cases}
		 0 & \text{ if } Q_i=0, 
		 \\
		 Q_i^{-t} \b_i v & \text{ if } Q_i \not=0 
	\end{cases} 
\quad (1 \leq i \leq m-1, \, t \geq 0)
\end{align}
for some $\Bb =(\b_i)_{1 \leq i \leq m-1} \in \prod_{i=1}^{m-1} \BB^{\lan Q_i \ran}$. 

On the other hand, by checking the defining relations, 
we can define the $1$-dimensional $U(\Fsl_m^{\lan \bQ \ran}[x])$-module 
$\cL^{\Bb} = \CC v$ by \eqref{1 dim slmx} 
for each $\Bb =(\b_i) \in \prod_{i=1}^{m-1} \BB^{\lan Q_i \ran}$. 
Now we proved the following lemma. 

\begin{lem}
\label{Lemma 1 dim slmx}
Any $1$-dimensional $U(\Fsl_m^{\lan Q \ran})$-module is isomorphic to $\cL^{\Bb}$ 
for some $\Bb \in \prod_{i=1}^{m-1} \BB^{\lan Q_i \ran}$. 
\end{lem}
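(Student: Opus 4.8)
The plan is to reduce the statement to the rank $1$ classification in Lemma \ref{Lemma 1-dim rep rank1} by restricting along the embeddings $\iota_i$ of \eqref{inj sl2x to slmx}, and then to confirm that the resulting prescription \eqref{1 dim slmx} genuinely defines a module by a direct check of the defining relations. Conceptually, any $1$-dimensional representation factors through the abelianization of $\Fsl_m^{\lan \bQ \ran}[x]$, since the operators representing the generators all live in the commutative algebra $\CC$; the content of the lemma is thus to determine exactly which scalars are admissible, and the rank $1$ case does precisely this one index at a time.

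For the necessity direction, I would fix a $1$-dimensional module $L = \CC v$ and, for each $i = 1, \dots, m-1$, view $L$ as a $U(\Fsl_2^{\lan Q_i \ran}[x])$-module through $\iota_i$. By Lemma \ref{Lemma 1-dim rep rank1} this restriction is isomorphic to $\cL^{\b_i}$ for some (unique) $\b_i \in \BB^{\lan Q_i \ran}$; in particular $\cX_{i,t}^{\pm} \cdot v = 0$ for all $t$, while $\cJ_{i,t}$ acts by the scalar $0$ if $Q_i = 0$ and by $Q_i^{-t}\b_i$ if $Q_i \neq 0$. Letting $i$ range over $1, \dots, m-1$ assembles the tuple $\Bb = (\b_i) \in \prod_{i=1}^{m-1}\BB^{\lan Q_i \ran}$ and produces exactly the action \eqref{1 dim slmx}, so every $1$-dimensional module arises in this way.

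For the converse I would verify that \eqref{1 dim slmx} actually defines a representation by testing relations (L1)--(L6) on the generator $v$. Because each $\cX_{i,t}^{\pm}$ acts as $0$ and each $\cJ_{i,t}$ acts as a scalar, relations (L1), (L2), (L4), (L5) and (L6) hold automatically: either both sides annihilate $v$, or a bracket of scalar operators vanishes. The only relation carrying content is (L3), whose left-hand side kills $v$, so one must confirm $\d_{ij}(\cJ_{i,s+t} - Q_i \cJ_{i,s+t+1}) \cdot v = 0$. This is immediate when $Q_i = 0$, and for $Q_i \neq 0$ it reduces to the telescoping cancellation $Q_i^{-(s+t)}\b_i - Q_i \cdot Q_i^{-(s+t+1)}\b_i = 0$. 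I expect this single identity to be the only genuine point to check; indeed it is precisely this cancellation that forces the eigenvalues of $\cJ_{i,t}$ to take the geometric form $Q_i^{-t}\b_i$, so there is no real obstacle beyond it.
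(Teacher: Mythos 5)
Your proposal is correct and follows essentially the same route as the paper: restricting along the homomorphisms $\iota_i$ of \eqref{inj sl2x to slmx} and invoking the rank~$1$ classification of Lemma \ref{Lemma 1-dim rep rank1} to force the action \eqref{1 dim slmx}, then checking the defining relations (of which only (L3) is nontrivial, via the telescoping cancellation you describe) to see that each $\cL^{\Bb}$ is indeed a module. Your write-up is in fact slightly more explicit than the paper's about which relations require verification.
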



\para 
For $\bu =(u_{i,t})\in \prod_{i=1}^{m-1} \prod_{t \geq 0} \CC$, 
let $v_0 $ be a highest weight vector of the simple highest weight $U(\Fsl_m^{\lan \bQ \ran}[x])$-module $\cL(\bu)$. 
When we regard $\cL(\bu)$ as a $U(\Fsl_2^{\lan Q_i \ran}[x])$-module 
through the homomorphism $\iota_i$ in \eqref{inj sl2x to slmx} 
for each $i=1,\dots, m-1$, 
we see that the $U(\Fsl_2^{\lan Q_i \ran}[x])$-submodule of $\cL(\bu)$ generated by $v_0$ 
is a highest weight $U(\Fsl_2^{\lan Q_i \ran}[x])$-module of highest weight 
$\bu_i=(u_{i,t})_{t \geq 0} \in \prod_{t \geq 0} \CC$ with the highest weight vector $v_0$. 
Then, if $\cL(\bu)$ is finite dimensional, 
we see that $\bu_i = \bu^{\lan Q_i \ran} (\vf_i, \b_i)$ for some 
$(\vf_i, \b_i) \in \CC[x]^{\lan Q_i \ran}_{\mo} \times \BB^{\lan Q_i \ran}$ 
by Theorem \ref{Thm class finite simple rank1} (or Corollary \ref{Cor ne finite rank1}). 

For $(\Bvf, \Bb) =((\vf_i, \b_i))_{1\leq i \leq m-1}  
	\in \prod_{i=1}^{m-1} \big( \CC[x]^{\lan Q_i \ran}_{\mo} \times \BB^{\lan Q_i \ran} \big)$, 
we define 
\begin{align*} 
\bu^{\lan \bQ \ran} (\Bvf, \Bb) = ( \bu^{\lan \bQ \ran}(\Bvf, \Bb)_{i,t} )_{1\leq i \leq m-1, \, t \geq 0} 
\in \prod_{i=1}^{m-1} \prod_{t \geq 0} \CC 
\end{align*} 
by 
\begin{align}
\label{def uQit}
\bu^{\lan \bQ \ran}(\Bvf, \Bb)_{i,t} 
= \begin{cases}
		\deg \vf_i + \b_i & \text{ if } t=0, 
		\\
		p_t (\g_{i,1}, \g_{i,2}, \dots, \g_{i,n_i}) & \text{ if } t >0 \text{ and } Q_i=0, 
		\\
		p_t (\g_{i,1}, \g_{i,2}, \dots, \g_{i,n_i}) + Q_i^{-t} \b_i & \text{ if } t >0 \text{ and } Q_i \not=0, 
	\end{cases}
\end{align}
when $\vf_i =( x- \g_{i,1})(x-\g_{i,2}) \dots (x-\g_{i,n_i})$ ($1\leq i \leq m-1$). 
Then we have that 
\begin{align*} 
(\bu^{\lan \bQ \ran}(\Bvf, \Bb)_{i,t})_{t \geq 0} = \bu^{\lan Q_i \ran} (\vf_i, \b_i)
\end{align*} 
for each $i=1,2,\dots, m-1$. 
From the definition, we see that 
\begin{align*}
\bu^{\lan \bQ \ran} (\Bvf, \Bb) = \bu^{\lan \bQ \ran} (\Bvf', \Bb') 
\LRa (\Bvf, \Bb) = (\Bvf', \Bb') 
\end{align*}
for $(\Bvf, \Bb),(\Bvf', \Bb') \in \prod_{i=1}^{m-1} \big( \CC[x]^{\lan Q_i \ran}_{\mo} \times \BB^{\lan Q_i \ran} \big)$. 
By the above argument, 
any finite dimensional simple $U(\Fsl_m^{\lan \bQ \ran}[x])$-module is isomorphic to 
$\cL( \bu^{\lan \bQ \ran}(\Bvf, \Bb))$ 
for some $(\Bvf, \Bb) \in \prod_{i=1}^{m-1} \big( \CC[x]^{\lan Q_i \ran}_{\mo} \times \BB^{\lan Q_i \ran} \big)$. 

On the other hand, 
for each $(\Bvf, \Bb) \in \prod_{i=1}^{m-1} \big( \CC[x]^{\lan Q_i \ran}_{\mo} \times \BB^{\lan Q_i \ran} \big)$,  
we can construct a finite dimensional highest weight $U(\Fsl_m^{\lan \bQ \ran}[x])$-module 
of highest weight $\bu^{\lan \bQ \ran}(\Bvf, \Bb)$ as follows. 

Let $\w_j$ $(1 \leq j \leq m-1)$ be the fundamental weight of $\Fsl_m$, 
and $L(\w_j)$ be the simple highest weight $U(\Fsl_m)$-module of highest weight $\w_j$. 
Let $v_0 \in L(\w_j)$ be a highest weight vector, 
then we have $e_i \cdot v_0 =0$ and $H_i \cdot v_0 = \d_{ij} v_0$ ($1\leq i \leq m-1$) by the definition. 
Recall that $L(\w_j)^{\ev_\g}$ is the evaluation module of $L(\w_j)$ at $\g \in \CC$. 
From the definition, 
we see that 
\begin{align} 
\label{action fund slm}
\cX_{i,t}^{+}  \cdot v_0 =0, 
\quad 
\cJ_{i,t} \cdot v_0 = \d_{ij} \g^t v_0 
\quad 
(1 \leq i \leq m-1, \, t \geq 0) 
\end{align}
in $L(\w_j)^{\ev_{\g}}$. 

For $(\Bvf, \Bb) =((\vf_i, \b_i))_{1\leq i \leq m-1}  
	\in \prod_{i=1}^{m-1} \big( \CC[x]^{\lan Q_i \ran}_{\mo} \times \BB^{\lan Q_i \ran} \big)$, 
we consider the $U(\Fsl_m^{\lan \bQ \ran}[x])$-module 
\begin{align*}
\cN_{(\Bvf, \Bb)} = \big( \bigotimes_{j=1}^{m-1} \bigotimes_{k=1}^{n_j} L(\w_j)^{\ev_{\g_{j, k}}} \big) \otimes \cL^{\Bb}, 
\end{align*}
where $n_j$ and $\g_{j,k}$ ($1\leq k \leq n_j$) are determined by 
$\vf_j =(x-\g_{j,1})(x- \g_{j,2}) \dots (x- \g_{j,n_j})$ for each $j=1,2,\dots, m-1$, 
and $\Bb=(\b_i)_{1 \leq i \leq m-1}$. 
Let $v_0^{(j,k)} \in L(\w_j)^{\ev_{\g_{j,k}}}$ ($1 \leq j \leq m-1$, $1 \leq k \leq n_j$) be a highest weight vector, 
and $\cL^{\Bb} = \CC w_0$. 
Put $v_{(\Bvf, \Bb)} = (\otimes_{j=1}^{m-1} \otimes_{k=1}^{n_j} v_0^{(j,k)}) \otimes w_0 \in \cN_{(\Bvf, \Bb)}$, 
then we have 
\begin{align}
\label{QQ1}
\cX_{i,t}^+ \cdot v_{(\Bvf, \Bb)} =0, 
\quad 
\cJ_{i,t} \cdot v_{(\Bvf, \Bb)} = \bu^{\lan \bQ \ran} (\Bvf, \Bb)_{i,t} \,  v_{(\Bvf, \Bb)} 
\quad (1\leq i \leq m-1, \, t \geq 0) 
\end{align}
by \eqref{action fund slm}. 
Let $\cN'_{(\Bvf, \Bb)}$ be the $U(\Fsl_m^{\lan \bQ \ran}[x])$-submodule of $\cN_{(\Bvf, \Bb)}$ 
generated by $v_{(\Bvf, \Bb)}$. 
Then \eqref{QQ1} implies that $\cN'_{(\Bvf, \Bb)}$ is a finite dimensional highest weight module of highest weight 
$\bu^{\lan \bQ \ran}(\Bvf, \Bb)$.  
Then we obtain the following classification of finite dimensional simple $U(\Fsl_m^{\lan \bQ \ran}[x])$-modules. 


\begin{thm} 
\label{Thm simple slmQ}
For $(\Bvf, \Bb) \in \prod_{i=1}^{m-1} \big( \CC[x]^{\lan Q_i \ran}_{\mo} \times \BB^{\lan Q_i \ran} \big)$,  
the highest weight simple $U(\Fsl_m^{\lan \bQ \ran}[x])$-module $\cL(\bu^{\lan \bQ \ran}(\Bvf, \Bb))$ 
of highest weight $\bu^{\lan \bQ \ran}(\Bvf, \Bb)$ is finite dimensional, 
and we have that 
\begin{align*}
\cL(\bu^{\lan \bQ \ran}(\Bvf, \Bb)) \cong \cL(\bu^{\lan \bQ \ran}(\Bvf', \Bb')) 
\LRa 
(\Bvf, \Bb) = (\Bvf', \Bb') 
\end{align*}
for $(\Bvf, \Bb), (\Bvf', \Bb') \in \prod_{i=1}^{m-1} \big( \CC[x]^{\lan Q_i \ran}_{\mo} \times \BB^{\lan Q_i \ran} \big)$
Moreover, 
\begin{align*}
\{ \cL(\bu^{\lan \bQ \ran}(\Bvf, \Bb)) \mid 
	(\Bvf, \Bb) \in \prod_{i=1}^{m-1} \big( \CC[x]^{\lan Q_i \ran}_{\mo} \times \BB^{\lan Q_i \ran} \big)
\}
\end{align*}
gives a complete set of isomorphism classes of finite dimensional simple $U(\Fsl_m^{\lan \bQ \ran}[x])$-modules. 
\end{thm}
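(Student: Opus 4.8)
The plan is to assemble three assertions, the substance of which has already been prepared in the discussion preceding the statement: (a) each $\cL(\bu^{\lan \bQ \ran}(\Bvf, \Bb))$ is finite dimensional; (b) distinct parameters yield non-isomorphic modules; and (c) every finite dimensional simple $U(\Fsl_m^{\lan \bQ \ran}[x])$-module arises in this form. Since Proposition \ref{Prop simple slmx HW} already reduces the problem to identifying which highest weights $\bu$ give finite dimensional $\cL(\bu)$, the argument is largely a matter of combining the explicit construction with the rank one classification.

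For (a), I would invoke the explicit module $\cN_{(\Bvf, \Bb)} = \big( \bigotimes_{j=1}^{m-1} \bigotimes_{k=1}^{n_j} L(\w_j)^{\ev_{\g_{j,k}}} \big) \otimes \cL^{\Bb}$. This is finite dimensional, being a tensor product of the finite dimensional $U(\Fsl_m)$-modules $L(\w_j)$ pulled back along the evaluation homomorphisms together with the one dimensional module $\cL^{\Bb}$; hence its submodule $\cN'_{(\Bvf, \Bb)}$ generated by $v_{(\Bvf, \Bb)}$ is finite dimensional as well. By \eqref{QQ1}, $\cN'_{(\Bvf, \Bb)}$ is a highest weight module of highest weight $\bu^{\lan \bQ \ran}(\Bvf, \Bb)$, so by Proposition \ref{Prop simple slmx HW} its simple head $\cN'_{(\Bvf, \Bb)}/\rad \cN'_{(\Bvf, \Bb)}$ is isomorphic to $\cL(\bu^{\lan \bQ \ran}(\Bvf, \Bb))$. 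Being a quotient of a finite dimensional module, $\cL(\bu^{\lan \bQ \ran}(\Bvf, \Bb))$ is finite dimensional.

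For (b), I would use that the highest weight of a simple highest weight module is an isomorphism invariant: in $\cL(\bu)$ the extremal weight space $M_{\la_{\bu}}$ is one dimensional and the $\cJ_{i,t}$ act on it by the scalars $u_{i,t}$, so any isomorphism $\cL(\bu) \cong \cL(\bu')$ forces $\bu = \bu'$. Combined with the already noted injectivity of the parametrization $(\Bvf, \Bb) \mapsto \bu^{\lan \bQ \ran}(\Bvf, \Bb)$, this gives the equivalence $\cL(\bu^{\lan \bQ \ran}(\Bvf, \Bb)) \cong \cL(\bu^{\lan \bQ \ran}(\Bvf', \Bb')) \LRa (\Bvf, \Bb) = (\Bvf', \Bb')$.

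For (c), the main point, I would run the rank one reduction: given a finite dimensional simple module $\cL(\bu)$ with highest weight vector $v_0$, restricting along each embedding $\iota_i : U(\Fsl_2^{\lan Q_i \ran}[x]) \ra U(\Fsl_m^{\lan \bQ \ran}[x])$ of \eqref{inj sl2x to slmx} and passing to $U(\Fsl_2^{\lan Q_i \ran}[x]) \cdot v_0$ yields a finite dimensional highest weight $U(\Fsl_2^{\lan Q_i \ran}[x])$-module of highest weight $\bu_i = (u_{i,t})_{t \geq 0}$; Theorem \ref{Thm class finite simple rank1} (equivalently the necessary conditions of Proposition \ref{Prop ne cond rank1}) then forces $\bu_i = \bu^{\lan Q_i \ran}(\vf_i, \b_i)$ for some $(\vf_i, \b_i) \in \CC[x]^{\lan Q_i \ran}_{\mo} \times \BB^{\lan Q_i \ran}$, whence $\bu = \bu^{\lan \bQ \ran}(\Bvf, \Bb)$. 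The step I would be most careful about is precisely this componentwise reduction — verifying that the restricted submodule is finite dimensional (immediate, as a subspace of $\cL(\bu)$) and has highest weight exactly $\bu_i$, so that the rank one constraints apply in each index $i$ independently. The genuine difficulty of the classification is thus confined to the rank one case already established; once (c) is secured, assertions (a) and (b) complete the proof formally.
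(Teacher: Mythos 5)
Your proposal is correct and follows essentially the same route as the paper: the sufficiency and finite dimensionality via the tensor product $\cN_{(\Bvf,\Bb)}$ of evaluation modules of fundamental representations with $\cL^{\Bb}$, the necessity via restriction along the embeddings $\iota_i$ to the rank one case (where, as you note, one really needs the necessary condition of Proposition \ref{Prop ne cond rank1} since the cyclic rank one submodule need not be simple), and the injectivity of $(\Bvf,\Bb)\mapsto\bu^{\lan\bQ\ran}(\Bvf,\Bb)$ combined with the highest weight being an isomorphism invariant. No gaps.
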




\section{Finite dimensional simple $U(\Fgl_m^{\lan \bQ \ran}[x])$-modules}
\label{Section class simple glmQ}
In this section, 
we classify the finite dimensional simple $U(\Fgl_m^{\lan \bQ \ran}[x])$-modules. 
By Proposition \ref{Prop basis slmQ[x] and glmQ[x]} (\roiii), 
$\Fsl_m^{\lan \bQ \ran}[x]$ is a Lie subalgebra of $\Fgl_m^{\lan \bQ \ran}[x]$. 
The difference of representations of  $\Fgl_m^{\lan \bQ \ran}[x]$ 
from one of $\Fsl_m^{\lan \bQ \ran}[x]$  
is given by the family of $1$-dimensional $U(\Fgl_m^{\lan \bQ \ran}[x])$-modules 
$\{ \wt{\cL}^{\bh} \mid \bh \in \prod_{t \geq 0} \CC \}$. 
We remark that $\wt{\cL}^{\bh}$ ($\bh \in \prod_{t \geq 0} \CC$) 
is  isomorphic to the trivial representation as a $U(\Fsl_m^{\lan \bQ \ran}[x])$-module 
when we restrict the action.

\para 
{\bf $1$-dimensional representations.} 
For $\Bb = (\b_i)_{1\leq i \leq m-1} \in \prod_{i=1}^{m-1} \BB^{\lan Q_i \ran}$, 
by checking the defining relations, 
we can define the $1$-dimensional $U(\Fgl_m^{\lan \bQ \ran}[x])$-module $\wt{\cL}^{\Bb} = \CC v$ by 
\begin{align*}
&\cX_{i,t}^{\pm} \cdot v =0, 
\quad 
\cJ_{i,t} \cdot v = \begin{cases} 0 & \text{ if } Q_i=0, \\ Q_i^{-t} \b_i v & \text{ if } Q_i \not=0 \end{cases} 
\quad (1\leq i \leq m-1, \, t \geq 0), 
\\
& 
\cI_{j,t} \cdot v = \big( \sum_{k=j}^{m-1} \cJ_{k,t} \big) \cdot v 
\quad (1\leq j \leq m-1, \, t \geq 0), 
\quad 
\cI_{m,t} \cdot v =0 
\quad (t \geq 0). 
\end{align*}
Note that $\cJ_{j,t} = \cI_{j,t} - \cI_{j+1,t}$ in $U(\Fgl_m^{\lan \bQ \ran}[x])$, 
we see that $\wt{\cL}^\b \cong \cL^\b$ as $U(\Fsl_m^{\lan \bQ \ran}[x])$-modules 
when we restrict the action on $\wt{\cL}^\b$ to  $U(\Fsl_m^{\lan \bQ \ran}[x])$ 
through the injective homomorphism $\Upsilon$ in the proposition \ref{Prop basis slmQ[x] and glmQ[x]} (\roiii). 

For $\bh =(h_t)_{t \geq 0} \in \prod_{t \geq 0} \CC$, 
we can also define the $1$-dimensional $U(\Fgl_m^{\lan \bQ \ran}[x])$-module $\wt{\cL}^{\bh} = \CC v$ by 
\begin{align*}
\cX_{i,t}^{\pm } \cdot v=0, 
\quad 
\cI_{j,t} \cdot v = h_t v 
\quad (1 \leq i \leq m-1, \, 1 \leq j \leq m, \, t \geq 0).
\end{align*}
We see that $\wt{\cL}^{\bh} \cong \cL^{\mathbf{0}}$ as $U(\Fsl_m^{\lan \bQ \ran}[x])$-modules 
when we restrict the action on $\wt{\cL}^\bh$ to  $U(\Fsl_m^{\lan \bQ \ran}[x])$，
where $\mathbf{0}=(0)_{1\leq i \leq m-1} \in \prod_{i=1}^{m-1} \BB^{\lan Q _i \ran}$ 
(i.e. $\cL^{\mathbf{0}}$ is the trivial representation). 
Then we have the following classification of $1$-dimensional $U(\Fgl_m^{\lan \bQ \ran})$-modules. 


\begin{lem} 
Any $1$-dimensional $U(\Fgl_m^{\lan \bQ \ran}[x])$-module is isomorphic to 
$\wt{\cL}^{\Bb} \otimes \wt{\cL}^{\bh}$ 
for some $\Bb \in \prod_{i=1}^{m-1} \BB^{\lan Q_i \ran} $ and $\bh \in \prod_{t \geq 0} \CC$. 
We have that 
\begin{align*}
\wt{\cL}^{\Bb} \otimes \wt{\cL}^{\bh} \cong \wt{\cL}^{\Bb'} \otimes \wt{\cL}^{\bh'}
\LRa 
(\Bb, \bh) =(\Bb', \bh'). 
\end{align*}
Moreover, we see that 
$\wt{\cL}^{\Bb} \otimes \wt{\cL}^{\bh} \cong \cL^{\Bb}$ as $U(\Fsl_m^{\lan \bQ \ran}[x])$-modules 
when we restrict the action on $\wt{\cL}^{\Bb} \otimes \wt{\cL}^{\bh}$ 
to $U(\Fsl_m^{\lan \bQ \ran}[x])$. 
\end{lem}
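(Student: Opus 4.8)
The plan is to reduce the classification to the $1$-dimensional case for $U(\Fsl_m^{\lan \bQ \ran}[x])$ already settled in Lemma \ref{Lemma 1 dim slmx}, and then to account for the single extra family of scalar parameters coming from the generators $\cI_{m,t}$.

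First I would take an arbitrary $1$-dimensional $U(\Fgl_m^{\lan \bQ \ran}[x])$-module $L = \CC v$, so that every generator acts by a scalar. Because $\dim_{\CC} L = 1$, every commutator acts as $0$ on $v$. Restricting the action along the injective homomorphism $\Upsilon$ of Proposition \ref{Prop basis slmQ[x] and glmQ[x]} (iii) turns $L$ into a $1$-dimensional $U(\Fsl_m^{\lan \bQ \ran}[x])$-module, which by Lemma \ref{Lemma 1 dim slmx} is isomorphic to $\cL^{\Bb}$ for a unique $\Bb = (\b_i) \in \prod_{i=1}^{m-1} \BB^{\lan Q_i \ran}$. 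This already fixes how every $\cX_{i,t}^{\pm}$ acts (namely as $0$) and how every $\cJ_{i,t} = \cI_{i,t} - \cI_{i+1,t}$ acts.

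Next I would isolate the remaining degree of freedom. Let $h_t \in \CC$ be the scalar by which $\cI_{m,t}$ acts, and put $\bh = (h_t)_{t \geq 0}$. Since $\cI_{j,t} = \big( \sum_{k=j}^{m-1} \cJ_{k,t} \big) + \cI_{m,t}$ holds in $U(\Fgl_m^{\lan \bQ \ran}[x])$, the scalars of all the $\cI_{j,t}$ with $1 \leq j \leq m-1$ are determined by $\Bb$ and $\bh$. I would then check, using the standard coproduct $\Delta(\xi) = \xi \otimes 1 + 1 \otimes \xi$ governing the tensor product of modules, that $\wt{\cL}^{\Bb} \otimes \wt{\cL}^{\bh}$ realizes precisely these scalars: on the line $\CC(v_1 \otimes v_2)$ the generator $\cI_{j,t}$ acts by $\sum_{k=j}^{m-1} d_{k,t} + h_t$ for $j \leq m-1$ and by $h_t$ for $j = m$, where $d_{k,t}$ denotes the scalar by which $\cJ_{k,t}$ acts on $\cL^{\Bb}$, while the $\cX_{i,t}^{\pm}$ act as $0$. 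Matching scalars gives $L \cong \wt{\cL}^{\Bb} \otimes \wt{\cL}^{\bh}$.

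For the separation of parameters I would use that two $1$-dimensional modules are isomorphic if and only if all their scalar actions coincide: here $\bh$ is recovered from the action of $\cI_{m,t}$ and $\Bb$ from the action of $\cJ_{i,0}$ (equivalently, from the restriction to $U(\Fsl_m^{\lan \bQ \ran}[x])$ via Lemma \ref{Lemma 1 dim slmx}), so $(\Bb, \bh) \mapsto \wt{\cL}^{\Bb} \otimes \wt{\cL}^{\bh}$ is injective on isomorphism classes. The final restriction statement is then immediate from the remarks preceding the lemma, since $\wt{\cL}^{\Bb}$ restricts to $\cL^{\Bb}$ and $\wt{\cL}^{\bh}$ restricts to the trivial module $\cL^{\mathbf{0}}$, whence the tensor product restricts to $\cL^{\Bb} \otimes \cL^{\mathbf{0}} \cong \cL^{\Bb}$. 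The only genuinely delicate point is verifying that no defining relation further constrains $\bh$: relation (L'2) is vacuous once the $\cX$'s act as $0$, and relation (L'3) only sees the differences $\cJ_{i,t}$, so the family $\bh$ remains free --- this is where I would be most careful.
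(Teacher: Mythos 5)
Your proposal is correct and follows essentially the same route as the paper: restrict along $\Upsilon$ to invoke Lemma \ref{Lemma 1 dim slmx} for $\Bb$, read off $\bh$ from the scalars of $\cI_{m,t}$, and recover the remaining $\cI_{j,t}$-scalars from the telescoping identity $\cI_{j,t}=\sum_{k=j}^{m-1}\cJ_{k,t}+\cI_{m,t}$. Your extra care about the coproduct on the tensor product and about $\bh$ being unconstrained by the defining relations is consistent with (and slightly more explicit than) what the paper leaves as "clear."
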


\begin{proof} 
Let $\cL = \CC v$ be a $1$-dimensional $U(\Fgl_m^{\lan \bQ \ran})$-module. 
By restricting the action on $\cL$  to $U(\Fsl_m^{\lan \bQ \ran})$ 
through the injective homomorphism $\Upsilon$ in the proposition \ref{Prop basis slmQ[x] and glmQ[x]} (\roiii), 
we have 
\begin{align}
\label{A1}
\begin{split}
&\cX_{i,t}^{\pm} \cdot v=0, 
\\
&\cJ_{i,t} \cdot v = (\cI_{i,t} - \cI_{i+1,t}) \cdot v = 
	\begin{cases}
		0 & \text{ if } Q_i=0, 
		\\
		Q_i^{-t} \b_i v & \text{ if } Q_i \not=0 
	\end{cases}
	\quad (1 \leq i \leq m-1, \, t \geq 0) 
\end{split}
\end{align}
for some $\Bb=(\b_i) \in \prod_{i=1}^{m-1} \BB^{\lan Q_i \ran}$ by Lemma \ref{Lemma 1 dim slmx}.

On the other hand, for $t \in \ZZ_{\geq 0}$, 
there exists $h_t \in \CC$ such that 
\begin{align}
\label{A2} 
\cI_{m, t} \cdot v = h_t v 
\end{align}
since $\dim \cL =1$. 
Then \eqref{A1} and \eqref{A2} imply taht 
\begin{align*}
\cI_{j,t} \cdot v = \big(\sum_{k=j}^{m-1} \cJ_{k,t}  + h_t\big) \cdot v 
\quad (1 \leq j \leq m-1, \, t \geq 0), 
\quad 
\cI_{m,t} \cdot v = h_t v \quad (t \geq 0). 
\end{align*}
Then we see that $\cL \cong \wt{\cL}^{\Bb} \otimes \wt{\cL}^{\bh}$. 
The remaining statements are clear. 
\end{proof}


\para 
For $ \wt{\bu}=(\wt{u}_{j,t}) \in \prod_{j=1}^m \prod_{t \geq 0} \CC$, 
let $v_0$ be a highest weight vector of the simple highest weight $U(\Fgl_m^{\lan \bQ \ran}[x])$-module 
$\cL(\wt{\bu})$. 
By restricting the action on $\cL(\wt{\bu})$ to $U(\Fsl_m^{\lan \bQ \ran}[x])$, 
Theorem \ref{Thm simple slmQ} implies that 
\begin{align}
\label{Z1}
\wt{u}_{i,t} - \wt{u}_{i+1,t} = \bu^{\lan \bQ \ran}(\Bvf, \Bb)_{i,t}
\quad (1\leq i \leq m-1, \, t \geq 0)
\end{align}
for some $(\Bvf,\Bb) \in \prod_{i=1}^{m-1} (\CC[x]^{\lan Q_i \ran}_{\mo} \times \BB^{\lan Q_i \ran})$ 
if $\cL(\wt{\bu})$ is finite dimensional. 

For $t \in \ZZ_{\geq 0}$, let $h_t \in \CC$ be such that 
\begin{align}
\label{Z2}
\wt{u}_{m,t} = h_t.
\end{align}
By \eqref{Z1} and \eqref{Z2}, we have 
\begin{align*}
\wt{u}_{j,t} = \sum_{k=j}^{m-1} \bu^{\lan \bQ \ran} (\Bvf, \Bb)_{k,t} + h_t 
\quad (1\leq j \leq m-1, \, t \geq 0), 
\quad 
\wt{u}_{m,t} = h_t \quad (t \geq 0)
\end{align*}
for some $(\Bvf,\Bb) \in \prod_{i=1}^{m-1} (\CC[x]^{\lan Q_i \ran}_{\mo} \times \BB^{\lan Q_i \ran})$ 
and $\bh=(h_t) \in \prod_{t \geq 0} \CC$  
if $\cL(\wt{\bu})$ is finite dimensional. 

For $(\Bvf,\Bb, \bh) =((\vf_i, \b_i)_{1\leq i \leq m-1} , (h_t)_{t \geq 0}) 
\in \prod_{i=1}^{m-1} (\CC[x]^{\lan Q_i \ran}_{\mo} \times \BB^{\lan Q_i \ran}) \times \prod_{t \geq 0} \CC$, 
we define 
\begin{align*}
\wt{\bu}^{\lan \bQ \ran} (\Bvf, \Bb, \bh) = (\wt{\bu}^{\lan \bQ \ran} (\Bvf, \Bb, \bh)_{j,t}) 
\in \prod_{j=1}^m \prod_{t \geq 0} \CC 
\end{align*}
by 
\begin{align*}
&\wt{\bu}^{\lan \bQ \ran} (\Bvf, \Bb, \bh)_{j,t} 
	= \begin{cases} 
		\dis \sum_{k=j}^{m-1} \bu^{\lan \bQ \ran}(\Bvf, \Bb)_{k,t} + h_t 
		& \text{ if }1 \leq j \leq m-1\text{ and } t \geq 0, 
		\\
		 h_t 
		& \text{ if } j=m \text{ and } t \geq 0. 
		\end{cases}
\end{align*} 
From the definition, 
we see that 
\begin{align*}
\wt{\bu}^{\lan \bQ \ran} (\Bvf, \Bb, \bh) = \wt{\bu}^{\lan \bQ \ran} (\Bvf', \Bb', \bh') 
\LRa 
  (\Bvf, \Bb, \bh) = (\Bvf', \Bb', \bh') 
\end{align*}
for $(\Bvf, \Bb, \bh), (\Bvf', \Bb', \bh') 
\in \prod_{i=1}^{m-1} (\CC[x]^{\lan Q_i \ran}_{\mo} \times \BB^{\lan Q_i \ran}) \times \prod_{t \geq 0} \CC$. 
By the above argument, 
any finite dimensional simple $U(\Fgl_m^{\lan \bQ \ran}[x])$-module is isomorphic to 
$\cL(\wt{\bu}^{\lan \bQ \ran} (\Bvf, \Bb, \bh))$ for some 
$(\Bvf, \Bb, \bh)\in \prod_{i=1}^{m-1} (\CC[x]^{\lan Q_i \ran}_{\mo} \times \BB^{\lan Q_i \ran}) \times \prod_{t \geq 0} \CC$.


On the other hand, 
for each 
$(\Bvf, \Bb, \bh)\in \prod_{i=1}^{m-1} (\CC[x]^{\lan Q_i \ran}_{\mo} \times \BB^{\lan Q_i \ran}) \times \prod_{t \geq 0} \CC$, 
we can construct a finite dimensional highest weight $U(\Fgl_m^{\lan \bQ \ran}[x])$-module of highest weight 
$\wt{\bu}^{\lan \bQ \ran}(\Bvf, \Bb, \bh)$ as follows. 

Let $P= \bigoplus_{i=1}^m \ZZ \ve_i$ be the weight lattice of $\Fgl_m$. 
Put $\wt{\w}_l=\ve_1+\ve_2+\dots+ \ve_l$ for $l=1,2,\dots,m-1$. 
Let $L(\wt{\w}_l)$ be the simple highest weight $U(\Fgl_m)$-module of highest weight $\wt{\w}_l$, 
and $v_0 \in L(\wt{\w}_l)$ be a highest weight vector. 
Then, we have 
\begin{align*} 
e_i \cdot v_0=0 \quad (1\leq i \leq m-1) 
\text{ and } 
K_j  \cdot v_0= 
	\begin{cases}
	v_0 & \text{ if }1 \leq  j \leq l,
	\\
	0 & \text{ if }  l < j \leq m.
	\end{cases} 
\end{align*} 
Recall that $L(\wt{\w}_l)^{\wt{\ev}_{\g}}$ is the evaluation module of $L(\wt{\w}_l)$ at $\g \in \CC$. 
From the definition, we see that 
\begin{align}
\label{X1}
\cX_{i,t}^+ \cdot v_0 =0 \quad (1 \leq i \leq m-1, \, t \geq 0),
\quad 
\cI_{j,t} \cdot v_0 = \begin{cases} \g^t v_0 & \text{ if } 1 \leq j \leq l, \\ 0 & \text{ if } l < j \leq m \end{cases} 
\,\, (t \geq 0)
\end{align}
in $L(\wt{\w}_l)^{\wt{\ev}_{\g}}$. 
(We remark that $L(\wt{\w}_l)^{\wt{\ev}_\g} \cong L(\w_l)^{\ev_{\g}}$ as $U(\Fsl_m^{\lan \bQ \ran}[x])$-modules 
when we restrict the action on $L(\wt{\w}_l)^{\wt{\ev}_\g}$ to $U(\Fsl_m^{\lan \bQ \ran}[x])$.) 

For $(\Bvf,\Bb, \bh) =((\vf_i, \b_i)_{1\leq i \leq m-1} , (h_t)_{t \geq 0}) 
\in \prod_{i=1}^{m-1} (\CC[x]^{\lan Q_i \ran}_{\mo} \times \BB^{\lan Q_i \ran}) \times \prod_{t \geq 0} \CC$, 
we consider the $U(\Fgl_m^{\lan \bQ \ran}[x])$-module 
\begin{align*}
\wt{\cN}_{(\Bvf, \Bb, \bh)} = \big( \bigotimes_{l=1}^{m-1} \bigotimes_{k=1}^{n_l} 
	L(\wt{\w}_l)^{\wt{\ev}_{\g_{l, k}}} \big) \otimes \wt{\cL}^{\Bb} \otimes \wt{\cL}^{\bh}, 
\end{align*}
where $n_l$ and $\g_{l,k}$ ($1\leq k \leq n_l$) are determined by 
$\vf_l =(x-\g_{l,1})(x- \g_{l,2}) \dots (x- \g_{l,n_l})$ for each $l=1,2,\dots, m-1$, 
and we put $\Bb=(\b_i)_{1 \leq i \leq m-1}$ and $\bh=(h_t)_{t \geq 0}$. 
Let $v_0^{(l,k)} \in L(\wt{\w}_l)^{\wt{\ev}_{\g_{l,k}}}$ ($1 \leq l \leq m-1$, $1 \leq k \leq n_l$) be a highest weight vector, 
$\wt{\cL}^{\Bb} = \CC w_0$ and $ \wt{\cL}^{\bh} = \CC z_0$. 
Put $v_{(\Bvf, \Bb, \bh)} = (\otimes_{l=1}^{m-1} \otimes_{k=1}^{n_l} v_0^{(l,k)}) \otimes w_0 \otimes z_0  
\in \wt{\cN}_{(\Bvf, \Bb)}$, 
then we have 
\begin{align}
\label{X2}
\cX_{i,t}^+ \cdot v_{(\Bvf, \Bb, \bh)} =0, 
\quad 
\cI_{j,t} \cdot v_{(\Bvf, \Bb, \bh)} = \wt{\bu}^{\lan \bQ \ran}(\Bvf, \Bb, \bh)_{j,t} \,  v_{(\Bvf, \Bb, \bh)} 
\end{align}
for $1 \leq i \leq m-1$, $1\leq j \leq m$ and  $ t \geq 0$ by \eqref{X1}. 
Let $\wt{\cN}'_{(\Bvf, \Bb, \bh)}$ be the $U(\Fgl_m^{\lan \bQ \ran}[x])$-submodule of $\wt{\cN}_{(\Bvf, \Bb, \bh)}$ 
generated by $v_{(\Bvf, \Bb, \bh)}$. 
Then \eqref{X2} implies that $\wt{\cN}'_{(\Bvf, \Bb, \bh)}$ is a finite dimensional highest weight module of highest weight 
$\wt{\bu}^{\lan \bQ \ran}(\Bvf, \Bb, \bh)$.  
Now we obtain the following classification of finite dimensional simple $U(\Fgl_m^{\lan \bQ \ran}[x])$-modules. 


\begin{thm} 
\label{Thm class simple glmQ}
For $(\Bvf, \Bb, \bh) 
\in \prod_{i=1}^{m-1} \big( \CC[x]^{\lan Q_i \ran}_{\mo} \times \BB^{\lan Q_i \ran} \big) \times \prod_{t \geq 0} \CC$,  
the highest weight simple $U(\Fgl_m^{\lan \bQ \ran}[x])$-module $\cL(\wt{\bu}^{\lan \bQ \ran}(\Bvf, \Bb, \bh))$ 
of highest weight $\wt{\bu}^{\lan \bQ \ran}(\Bvf, \Bb, \bh)$ is finite dimensional, 
and we have that 
\begin{align*}
\cL(\wt{\bu}^{\lan \bQ \ran}(\Bvf, \Bb, \bh)) \cong \cL(\wt{\bu}^{\lan \bQ \ran}(\Bvf', \Bb', \bh')) 
\LRa 
(\Bvf, \Bb, \bh) = (\Bvf', \Bb', \bh') 
\end{align*}
for $(\Bvf, \Bb, \bh), (\Bvf', \Bb', \bh') 
\in \prod_{i=1}^{m-1} \big( \CC[x]^{\lan Q_i \ran}_{\mo} \times \BB^{\lan Q_i \ran} \big) \times \prod_{t \geq 0} \CC$. 
Moreover, 
\begin{align*}
\{ \cL(\wt{\bu}^{\lan \bQ \ran}(\Bvf, \Bb, \bh)) \mid 
	(\Bvf, \Bb, \bh) \in \prod_{i=1}^{m-1} 
		\big( \CC[x]^{\lan Q_i \ran}_{\mo} \times \BB^{\lan Q_i \ran} \big) \times \prod_{t \geq 0} \CC 
\}
\end{align*}
gives a complete set of isomorphism classes of finite dimensional simple $U(\Fgl_m^{\lan \bQ \ran}[x])$-modules. 
\end{thm}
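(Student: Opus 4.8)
The plan is to obtain the theorem by assembling the two halves already prepared in the discussion preceding the statement, in exact parallel with the $\Fsl_m$ case of Theorem \ref{Thm simple slmQ}; the only new feature is the extra parameter $\bh$, which records the action of the generators $\cI_{m,t}$ that are absent from $\Fsl_m^{\lan \bQ \ran}[x]$. First I would invoke Proposition \ref{Prop h.w. simple glmQ} to reduce to highest weight modules: every finite dimensional simple $U(\Fgl_m^{\lan \bQ \ran}[x])$-module is $\cL(\wt{\bu})$ for a unique $\wt{\bu} = (\wt{u}_{j,t}) \in \prod_{j=1}^m \prod_{t \geq 0} \CC$, so it suffices to decide exactly which $\wt{\bu}$ give a finite dimensional $\cL(\wt{\bu})$, which is precisely the content encoded in the definition of $\wt{\bu}^{\lan \bQ \ran}(\Bvf, \Bb, \bh)$.

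For the necessity (completeness) direction, I would take a highest weight vector $v_0$ of a finite dimensional $\cL(\wt{\bu})$ and restrict the action to $\Fsl_m^{\lan \bQ \ran}[x]$ through the embedding $\Upsilon$ of Proposition \ref{Prop basis slmQ[x] and glmQ[x]} (\roiii). Since $\cJ_{i,t} = \cI_{i,t} - \cI_{i+1,t}$, the submodule $U(\Fsl_m^{\lan \bQ \ran}[x]) \cdot v_0$ is a highest weight module with highest weight $(\wt{u}_{i,t} - \wt{u}_{i+1,t})_{1 \leq i \leq m-1,\, t \geq 0}$; being contained in a finite dimensional space it is finite dimensional, hence so is its simple quotient, which shares the same highest weight. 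Theorem \ref{Thm simple slmQ} then forces this weight to equal $\bu^{\lan \bQ \ran}(\Bvf, \Bb)$ for some $(\Bvf, \Bb)$, giving \eqref{Z1}; setting $h_t = \wt{u}_{m,t}$ as in \eqref{Z2} and telescoping $\wt{u}_{j,t} = \sum_{k=j}^{m-1}(\wt{u}_{k,t} - \wt{u}_{k+1,t}) + \wt{u}_{m,t}$ upward recovers $\wt{\bu} = \wt{\bu}^{\lan \bQ \ran}(\Bvf, \Bb, \bh)$.

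For the sufficiency (finite dimensionality) direction I would use the explicitly constructed module $\wt{\cN}_{(\Bvf, \Bb, \bh)}$, a tensor product of evaluation modules $L(\wt{\w}_l)^{\wt{\ev}_{\g_{l,k}}}$ of the fundamental representations with the one dimensional modules $\wt{\cL}^{\Bb}$ and $\wt{\cL}^{\bh}$. Because every $\cX_{i,t}^+$ and every $\cI_{j,t}$ is a Lie algebra element, it acts on a tensor product diagonally, so the joint eigenvalue of $\cI_{j,t}$ on $v_{(\Bvf, \Bb, \bh)}$ is the sum of the contributions \eqref{X1} over all tensor factors; checking weight by weight that these accumulate to $\sum_{k=j}^{m-1}\bu^{\lan \bQ \ran}(\Bvf, \Bb)_{k,t} + h_t$ is exactly \eqref{X2}. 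Thus $\wt{\cN}'_{(\Bvf, \Bb, \bh)} = U(\Fgl_m^{\lan \bQ \ran}[x]) \cdot v_{(\Bvf, \Bb, \bh)}$ is a finite dimensional highest weight module of highest weight $\wt{\bu}^{\lan \bQ \ran}(\Bvf, \Bb, \bh)$, whence its simple quotient $\cL(\wt{\bu}^{\lan \bQ \ran}(\Bvf, \Bb, \bh))$ is finite dimensional. Distinctness then follows from the already-noted injectivity of $(\Bvf, \Bb, \bh) \mapsto \wt{\bu}^{\lan \bQ \ran}(\Bvf, \Bb, \bh)$ together with the fact that simple highest weight modules of different highest weights cannot be isomorphic.

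The step I expect to be the main obstacle is the necessity argument, precisely because restriction to $\Fsl_m^{\lan \bQ \ran}[x]$ need not preserve simplicity: one cannot directly apply Theorem \ref{Thm simple slmQ} to $\cL(\wt{\bu})$ viewed as an $\Fsl_m^{\lan \bQ \ran}[x]$-module. The remedy, as above, is to pass to the highest weight submodule generated by $v_0$ and exploit that a finite dimensional highest weight module and its simple quotient carry the same highest weight, so that the special linear classification still pins down all the differences $\wt{u}_{i,t} - \wt{u}_{i+1,t}$. The remaining coordinates $\wt{u}_{m,t}$ are then completely unconstrained, which is exactly the one dimensional family $\bh$ distinguishing $\Fgl_m^{\lan \bQ \ran}[x]$-representations from $\Fsl_m^{\lan \bQ \ran}[x]$-representations.
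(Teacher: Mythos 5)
Your proposal is correct and follows essentially the same route as the paper: reduction to highest weight modules via Proposition \ref{Prop h.w. simple glmQ}, necessity by restricting along $\Upsilon$ and invoking the $\Fsl_m^{\lan \bQ \ran}[x]$ classification to pin down the differences $\wt{u}_{i,t}-\wt{u}_{i+1,t}$, and sufficiency via the tensor product $\wt{\cN}_{(\Bvf,\Bb,\bh)}$ of evaluation modules with $\wt{\cL}^{\Bb}\otimes\wt{\cL}^{\bh}$. Your extra care in passing to the highest weight $U(\Fsl_m^{\lan \bQ \ran}[x])$-submodule generated by $v_0$ (rather than applying Theorem \ref{Thm simple slmQ} directly to the possibly non-simple restriction) is exactly the justification the paper leaves implicit.
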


We also have the following corollary. 

\begin{cor}
For $(\Bvf, \Bb, \bh) 
\in \prod_{i=1}^{m-1} \big( \CC[x]^{\lan Q_i \ran}_{\mo} \times \BB^{\lan Q_i \ran} \big) \times \prod_{t \geq 0} \CC$,  
we have 
\begin{align*}
\cL(\wt{\bu}^{\lan \bQ \ran}(\Bvf, \Bb, \bh)) \cong \cL( \bu^{\lan \bQ \ran} (\Bvf, \Bb)) 
\text{ as $U(\Fsl_m^{\lan \bQ \ran}[x])$-modules} 
\end{align*}
when we restrict the action on $\cL(\wt{\bu}^{\lan \bQ \ran}(\Bvf, \Bb, \bh))$ to
 $U(\Fsl_m^{\lan \bQ \ran}[x])$. 
\end{cor}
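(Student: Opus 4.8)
The plan is to exploit the fact that $\Fgl_m^{\lan \bQ \ran}[x]$ is, up to a central part, nothing more than $\Fsl_m^{\lan \bQ \ran}[x]$, so that restriction of a simple module stays simple. First I would isolate the central elements $\cC_t = \sum_{j=1}^m \cI_{j,t}$ ($t \geq 0$). Using the relation (L'2) together with $\sum_{j=1}^m a'_{ji} = a'_{ii} + a'_{i+1,i} = 1-1 = 0$, one checks $[\cC_t, \cX_{i,s}^{\pm}] = 0$, while $[\cC_t, \cI_{j,s}] = 0$ holds by (L'1); hence each $\cC_t$ is central in $\Fgl_m^{\lan \bQ \ran}[x]$. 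Setting $\fz = \bigoplus_{t \geq 0} \CC \cC_t$, the passage $(\cI_{1,t}, \dots, \cI_{m,t}) \leftrightarrow (\cJ_{1,t}, \dots, \cJ_{m-1,t}, \cC_t)$ is an invertible change of basis for each $t$, so the basis description in Proposition~\ref{Prop basis slmQ[x] and glmQ[x]}~(\roii), (\roiv) yields a direct sum of Lie algebras $\Fgl_m^{\lan \bQ \ran}[x] = \Upsilon(\Fsl_m^{\lan \bQ \ran}[x]) \oplus \fz$ with $\fz$ a central ideal; in particular, by the Poincar\'e--Birkhoff--Witt theorem, $U(\Fgl_m^{\lan \bQ \ran}[x]) = U(\Fsl_m^{\lan \bQ \ran}[x]) \cdot U(\fz)$ with $U(\fz)$ central.

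Next I would compare the submodule lattices over the two algebras. Let $v_0$ be a highest weight vector of $\cL(\wt\bu^{\lan \bQ \ran}(\Bvf, \Bb, \bh))$. Since $\cC_t$ is central and the module is generated by $v_0$ over $U(\Fgl_m^{\lan \bQ \ran}[x])$, the element $\cC_t$ acts on the whole module by the single scalar $\sum_{j=1}^m \wt\bu^{\lan \bQ \ran}(\Bvf, \Bb, \bh)_{j,t}$ by which it acts on $v_0$. Consequently every $U(\Fsl_m^{\lan \bQ \ran}[x])$-stable subspace is automatically stable under $U(\fz)$, hence under all of $U(\Fgl_m^{\lan \bQ \ran}[x])$. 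Therefore the $U(\Fsl_m^{\lan \bQ \ran}[x])$- and $U(\Fgl_m^{\lan \bQ \ran}[x])$-submodules of $\cL(\wt\bu^{\lan \bQ \ran}(\Bvf, \Bb, \bh))$ coincide, and simplicity over $U(\Fgl_m^{\lan \bQ \ran}[x])$ (Theorem~\ref{Thm class simple glmQ}) forces simplicity of the restriction. This passage from $\Fgl$-simplicity to $\Fsl$-simplicity is the only genuine obstacle; the central-character argument is exactly what disposes of it, and everything else is bookkeeping with the explicit highest weights.

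Finally I would identify the restricted module. The vector $v_0$ satisfies $\cX_{i,t}^+ \cdot v_0 = 0$ and $\cJ_{i,t} \cdot v_0 = (\cI_{i,t} - \cI_{i+1,t}) \cdot v_0 = \bigl(\wt\bu^{\lan \bQ \ran}(\Bvf, \Bb, \bh)_{i,t} - \wt\bu^{\lan \bQ \ran}(\Bvf, \Bb, \bh)_{i+1,t}\bigr) v_0$, and by the definition of $\wt\bu^{\lan \bQ \ran}$ this eigenvalue equals $\bu^{\lan \bQ \ran}(\Bvf, \Bb)_{i,t}$ for all $i$ and $t$ (both for $1 \leq i \leq m-2$, where the telescoping sums cancel, and for $i = m-1$, where $\wt\bu_{m,t} = h_t$). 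Hence the restriction is a simple $U(\Fsl_m^{\lan \bQ \ran}[x])$-module generated (by simplicity) by the highest weight vector $v_0$ of highest weight $\bu^{\lan \bQ \ran}(\Bvf, \Bb)$, so Proposition~\ref{Prop simple slmx HW} gives $\cL(\wt\bu^{\lan \bQ \ran}(\Bvf, \Bb, \bh)) \cong \cL(\bu^{\lan \bQ \ran}(\Bvf, \Bb))$ as $U(\Fsl_m^{\lan \bQ \ran}[x])$-modules.

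As an independent cross-check matching the constructive approach, one can instead restrict the module $\wt\cN_{(\Bvf, \Bb, \bh)}$ built in \S\ref{Section class simple glmQ}: by the remarks there each factor $L(\wt\w_l)^{\wt\ev_{\g_{l,k}}}$ restricts to $L(\w_l)^{\ev_{\g_{l,k}}}$, while $\wt\cL^{\Bb} \cong \cL^{\Bb}$ and $\wt\cL^{\bh} \cong \cL^{\mathbf{0}}$ is trivial, so $\wt\cN_{(\Bvf, \Bb, \bh)}$ restricts to $\cN_{(\Bvf, \Bb)} \otimes \cL^{\mathbf{0}} \cong \cN_{(\Bvf, \Bb)}$ with the generating vector $v_{(\Bvf, \Bb, \bh)}$ corresponding to $v_{(\Bvf, \Bb)}$; passing to the submodule generated by this vector and then to its simple quotient recovers the same isomorphism.
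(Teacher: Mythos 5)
Your proof is correct, and the crucial step --- showing that the restriction to $U(\Fsl_m^{\lan \bQ \ran}[x])$ remains simple --- is handled by a genuinely different argument from the paper's. The paper argues by contradiction with singular vectors: since $\cL(\wt{\bu}^{\lan \bQ \ran}(\Bvf,\Bb,\bh)) = U(\bn^-)\cdot v_0 = U(\Fsl_m^{\lan \bQ \ran}[x])\cdot v_0$, a proper nonzero $U(\Fsl_m^{\lan \bQ \ran}[x])$-submodule would contain a vector $w_0 \notin \CC v_0$ killed by all $\cX_{i,t}^+$, and then $U(\Fgl_m^{\lan \bQ \ran}[x])\cdot w_0$ would be a proper nonzero $U(\Fgl_m^{\lan \bQ \ran}[x])$-submodule, contradicting simplicity. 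You instead split off the central part: the elements $\cC_t = \sum_{j=1}^m \cI_{j,t}$ are indeed central (your computation $\sum_j a'_{ji}=0$ is right), $\Fgl_m^{\lan \bQ \ran}[x] = \Upsilon(\Fsl_m^{\lan \bQ \ran}[x]) \oplus \fz$ follows from the basis in Proposition 2.4, and since each $\cC_t$ acts by the scalar $\sum_j \wt{u}_{j,t}$ on the cyclic generator $v_0$ it acts by that scalar everywhere, so the $U(\Fsl_m^{\lan \bQ \ran}[x])$- and $U(\Fgl_m^{\lan \bQ \ran}[x])$-submodule lattices coincide. Your route is arguably more conceptual and does not use finite-dimensionality (the paper's extraction of a simple highest-weight submodule from a hypothetical proper submodule implicitly does), while the paper's route stays entirely inside the highest-weight machinery it has already set up and avoids introducing the central decomposition. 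The identification of the highest weight of the restriction via the telescoping of $\wt{\bu}^{\lan \bQ \ran}(\Bvf,\Bb,\bh)_{i,t} - \wt{\bu}^{\lan \bQ \ran}(\Bvf,\Bb,\bh)_{i+1,t} = \bu^{\lan \bQ \ran}(\Bvf,\Bb)_{i,t}$ matches what the paper does implicitly, and your closing cross-check via $\wt{\cN}_{(\Bvf,\Bb,\bh)}$ is consistent with the constructions of \S 7--8, though as you present it it is only a plausibility check rather than an independent proof.
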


\begin{proof} 
We prove that 
$\cL(\wt{\bu}^{\lan \bQ \ran}(\Bvf, \Bb, \bh))$ is also simple 
when we restrict the action to $U(\Fsl_m^{\lan \bQ \ran}[x])$. 
Then the isomorphism follows from the definitions of 
$\wt{\bu}^{\lan \bQ \ran}(\Bvf, \Bb, \bh)$ and $\bu^{\lan \bQ \ran} (\Bvf, \Bb)$. 

Let $v_0 \in \cL(\wt{\bu}^{\lan \bQ \ran}(\Bvf, \Bb, \bh))$ 
be a highest weight vector as the $U(\Fgl_m^{\lan \bQ \ran}[x])$-module. 
Then we have 
\begin{align*} 
\cL(\wt{\bu}^{\lan \bQ \ran}(\Bvf, \Bb, \bh)) = U(\fn^-) \cdot v_0 
\end{align*} 
by the triangular decomposition in Proposition \ref{Prop basis slmQ[x] and glmQ[x]} (\roiv). 
This implies that 
\begin{align}
\label{XX1} 
\cL(\wt{\bu}^{\lan \bQ \ran}(\Bvf, \Bb, \bh)) = U(\Fsl_m^{\lan \bQ \ran}[x]) \cdot v_0. 
\end{align}

Assume that $\cL(\wt{\bu}^{\lan \bQ \ran}(\Bvf, \Bb, \bh))$ is not simple as a $U(\Fsl_m^{\lan \bQ \ran}[x])$-module 
by the restriction, 
then $\cL(\wt{\bu}^{\lan \bQ \ran}(\Bvf, \Bb, \bh))$ contains a non-zero proper simple $U(\Fsl_m^{\lan \bQ \ran}[x])$-submodule 
which is a highest weight $U(\Fsl_m^{\lan \bQ \ran}[x])$-module. 
This implies  that there exist an element $w_0 \in \cL(\wt{\bu}^{\lan \bQ \ran}(\Bvf, \Bb, \bh))$ 
such that $\cX_{i,t}^+ \cdot w_0  =0$ ($1\leq i \leq m-1, \, t \geq 0)$ 
and $w_0 \not \in \CC v_0$. 
Then $U(\Fgl_m^{\lan \bQ \ran}[x]) \cdot w_0$ turns out to be a non-zero proper $U(\Fgl_m^{\lan \bQ \ran}[x])$-submodule 
of $\cL(\wt{\bu}^{\lan \bQ \ran}(\Bvf, \Bb, \bh))$. 
This is a contradiction. 
\end{proof}


\appendix 

\section{Some combinatorics}
\para 
Let $\ZZ[x_1,\dots, x_n]$  be the ring of polynomials in  independent variables $x_1, \dots, x_n$ over $\ZZ$. 
For $k \in \ZZ_{>0}$, put   
\begin{align*}
& p_k(x_1,\dots, x_n) = x_1^k+ x_2^k + \dots + x_n^k \in \ZZ [x_1,\dots, x_n],
\\
& e_k (x_1,\dots, x_n) = \sum_{1 \leq i_1 < i_2< \dots < i_k \leq n} x_{i_1} x_{i_2} \dots x_{i_k} 
	\in \ZZ[x_1,\dots, x_n]. 
\end{align*} 
Namely, 
$p_k(x_1,\dots,x_n)$ is the power sum symmetric polynomial of degree $k$, 
and $e_k(x_1,\dots, x_n)$ is the elementary symmetric polynomial of degree $k$. 
We also put $e_0 (x_1,\dots, x_n)=1$. 
Then, for $k >0$, we have 
\begin{align}
\label{e sum e p}
k e_k(x_1,\dots, x_n) = \sum_{z=1}^k (-1)^{z-1} p_z (x_1,\dots, x_n) e_{k-z} (x_1,\dots, x_n) 
\end{align}
by \cite[\S1 (2.11')]{Mac}.
For $s >n$, we have 
\begin{align*}
0
&= \sum_{z=1}^s (-1)^{z-1} p_z (x_1,\dots, x_n) e_{s-z} (x_1,\dots, x_n)
\\
&= \sum_{z=1}^{s-1} (-1)^{z-1} p_z (x_1,\dots, x_n) e_{s-z} (x_1,\dots, x_n) 
	+ (-1)^{s-1} p_s (x_1,\dots, x_n) 
\\
&= \sum_{z=s-n}^{s-1} (-1)^{z-1} p_z (x_1,\dots, x_n) e_{s-z} (x_1,\dots, x_n) 
	+ (-1)^{s-1} p_s (x_1,\dots, x_n), 
\end{align*}
where we note that $e_{s-z} (x_1,\dots, x_n)=0$ if $z < s-n$. 
Put $w = z-s +n$, we have 
\begin{align}
\label{sum p e = p}
\sum_{w=0}^{n-1} (-1)^{n - w +1} p_{s-n +w} (x_1,\dots, x_n) e_{n-w} (x_1,\dots, x_n) 
	=  p_s (x_1,\dots, x_n)
\end{align}
for $s >n$. 


\begin{lem}
\label{Lemma solution equations}
For $n \in \ZZ_{>0}$ and $u_1, u_2,\dots, u_n \in \CC$, 
the simultaneous equations 
\begin{align}
\label{equations 1}
\begin{cases}
p_1 (x_1, x_2,\dots, x_n) = u_1, 
\\
p_2 (x_1, x_2,\dots, x_n)=u_2, 
\\
\quad \vdots 
\\
p_n (x_1,x_2,\dots, x_n)=u_n 
\end{cases}
\end{align}
has a solution in $\CC$. 
\end{lem}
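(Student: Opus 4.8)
The plan is to convert the power-sum data $u_1,\dots,u_n$ into elementary-symmetric data by means of the Newton identity \eqref{e sum e p}, and then to realise the latter as the coefficients of a monic polynomial whose roots, furnished by the fundamental theorem of algebra, give the desired solution. The guiding observation is that over $\CC$, where every integer $k \geq 1$ is invertible, the Newton identities \eqref{e sum e p} set up a bijection between the first $n$ power sums and the first $n$ elementary symmetric functions. Concretely, I would first define scalars $a_0, a_1,\dots, a_n \in \CC$ by $a_0 = 1$ and
\[
a_k = \frac{1}{k} \sum_{z=1}^k (-1)^{z-1} u_z \, a_{k-z} \quad (1 \leq k \leq n),
\]
which is legitimate precisely because $k \neq 0$ in $\CC$. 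By construction, the given scalars $u_1,\dots,u_n$ together with $a_1,\dots,a_n$ satisfy the relation $k a_k = \sum_{z=1}^k (-1)^{z-1} u_z a_{k-z}$ for $1 \leq k \leq n$.

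Next I would form the monic polynomial
\[
f(T) = T^n - a_1 T^{n-1} + a_2 T^{n-2} - \dots + (-1)^n a_n \in \CC[T].
\]
By the fundamental theorem of algebra, $f$ factors as $f(T) = \prod_{i=1}^n (T - \g_i)$ for some $\g_1,\dots,\g_n \in \CC$ (counted with multiplicity). Expanding this product and comparing coefficients, i.e. by Vieta's formulas, I obtain $e_k(\g_1,\dots,\g_n) = a_k$ for all $k=1,\dots,n$.

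Finally I would verify that $(\g_1,\dots,\g_n)$ solves the system \eqref{equations 1}. Applying \eqref{e sum e p} to the variables $\g_1,\dots,\g_n$ and substituting $e_{k-z}(\g_1,\dots,\g_n) = a_{k-z}$ gives
\[
k a_k = \sum_{z=1}^k (-1)^{z-1} p_z(\g_1,\dots,\g_n)\, a_{k-z} \quad (1 \leq k \leq n).
\]
Thus both the sequence $(u_k)_{1 \leq k \leq n}$ and the sequence $(p_k(\g_1,\dots,\g_n))_{1 \leq k \leq n}$ satisfy the same recursion $k a_k = \sum_{z=1}^k (-1)^{z-1} q_z a_{k-z}$ with the same coefficients $a_k$. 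Isolating the $z=k$ summand, which equals $(-1)^{k-1} q_k$ since $a_0 = 1$, shows that each $q_k$ is uniquely determined by $a_1,\dots,a_k$ and $q_1,\dots,q_{k-1}$. Since $u_1 = a_1 = p_1(\g_1,\dots,\g_n)$, an induction on $k$ then yields $p_k(\g_1,\dots,\g_n) = u_k$ for all $k = 1,\dots,n$, which is exactly \eqref{equations 1}.

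The argument is essentially routine once the Newton identity \eqref{e sum e p} is available; the only genuine inputs are the invertibility of the integers $1,\dots,n$ in $\CC$, used to solve the recursion defining the $a_k$, and the fundamental theorem of algebra, used to produce the roots $\g_i$. I do not expect any serious obstacle here; the only point requiring mild care is to run the uniqueness induction within the range $1 \leq k \leq n$, where the relevant instances of the Newton identities do not yet involve power sums of index exceeding $n$.
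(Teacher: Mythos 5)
Your proof is correct, but it follows a genuinely different route from the paper's. The paper argues by induction on $n$: it rewrites the system as equations in $x_1,\dots,x_{n-1}$ with right-hand sides $u_i-x_n^i$, uses the Newton identity \eqref{e sum e p} to replace the last equation $p_n(x_1,\dots,x_{n-1})=u_n-x_n^n$ by a single polynomial equation $(\ast 1)$ in the one variable $x_n$ (after expressing the $e_{n-i}$ as polynomials in power sums), picks a root $\b_n$ of that equation by the fundamental theorem of algebra, and then invokes the inductive hypothesis for the remaining $n-1$ equations. You instead pass once and for all from the power-sum data $(u_k)$ to elementary-symmetric data $(a_k)$ via the triangular Newton recursion (using that $1,\dots,n$ are invertible in $\CC$), realise the $a_k$ as the elementary symmetric functions of the roots of a single monic degree-$n$ polynomial, and then run a uniqueness induction on the shared recursion to conclude $p_k(\g)=u_k$. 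Your version is the more standard argument, applies a single application of the fundamental theorem of algebra rather than one per inductive step, and isolates cleanly the two ingredients (characteristic zero and algebraic closedness); the paper's version avoids having to invert the Newton recursion explicitly but at the cost of a somewhat heavier bookkeeping with the coefficients $\a_\la$. Both are complete proofs, and your uniqueness induction at the end is carried out correctly within the range $1\leq k\leq n$ where no power sums of index exceeding $n$ appear.
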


\begin{proof} 
We prove the lemma by the induction on $n$. 
In the case where $n=1$, it is clear. 
If $n >1$, 
the equations \eqref{equations 1} are equivalent to the equations 
\begin{align}
\label{equations 2}
\begin{cases} 
p_1(x_1, x_2,\dots, x_{n-1}) = u_1 - x_n, 
\\
p_2 (x_1,x_2,\dots, x_{n-1}) = u_2 - x_n^2, 
\\
\quad \vdots 
\\
p_{n-1}(x_1, x_2, \dots, x_{n-1}) = u_{n-1} - x_n^{n-1}, 
\\
p_n (x_1,x_2,\dots, x_{n-1}) = u_n - x_n^{n}. 
\end{cases}
\end{align}

By \eqref{e sum e p}, 
we have 
\begin{align*}
&p_n (x_1,x_2,\dots, x_{n-1}) 
\\
&= \sum_{i=1}^{n-1} (-1)^{i+n-1} p_i (x_1,x_2,\dots, x_{n-1}) e_{n-i} (x_1,x_2,\dots, x_{n-1}), 
\end{align*}
where we note that $e_n (x_1,x_2,\dots, x_{n-1})=0$. 
On the other hand, 
we can write 
\begin{align*}
e_{n-i} (x_1,x_2,\dots, x_{n-1}) 
= \sum_{\la \vdash n-i} \a_{\la} p_{\la} (x_1,x_2,\dots, x_{n-1}) 
\end{align*}
for some $\a_{\la} \in \CC$, 
where 
$p_{\la}(x_1,x_2,\dots, x_{n-1}) = \prod_{j = 1}^{\ell(\la)} p_{\la_j}(x_1,x_2,\dots, x_{n-1})$ 
for $\la=(\la_1, \la_2, \dots ) \vdash n-i$. 
Thus we have 
\begin{align*}
p_n (x_1,x_2,\dots, x_{n-1}) 
= \sum_{i=1}^{n-1} \sum_{\la \vdash n-i} (-1)^{i+n-1} \a_{\la} p_i (x_1,x_2, \dots, x_{n-1}) p_{\la}(x_1,x_2,\dots, x_{n-1}).  
\end{align*}
(Note that $\{p_{\mu} (x_1,x_2,\dots, x_{n-1}) \mid \mu \vdash k\}$ is not linearly independent if $k \geq n$. 
For an example, we have $p_{(3)}(x_1,x_2)= \frac{3}{2} p_{(2,1)}(x_1,x_2) - \frac{1}{2} p_{(1,1,1)}(x_1,x_2)$.) 

Then the equations \eqref{equations 2} are equivalent to the equations 
\begin{align}
\label{equations 3}
\begin{cases} 
p_1(x_1, x_2,\dots, x_{n-1}) = u_1 - x_n, 
\\
p_2 (x_1,x_2,\dots, x_{n-1}) = u_2 - x_n^2, 
\\
\quad \vdots 
\\
p_{n-1}(x_1, x_2, \dots, x_{n-1}) = u_{n-1} - x_n^{n-1}, 
\\
\dis 
\sum_{i=1}^{n-1} \sum_{\la \vdash n-i} (-1)^{i+n-1} \a_{\la}  
	(u_i - x_n^i) \prod_{j = 1}^{\ell (\la)} (u_{\la_j} - x_n^{\la_j})
	= u_n - x_n^{n} 
	\quad \cdots (\ast 1). 
\end{cases}
\end{align}

Let $\b_n$ be a solution of the equation $(\ast 1)$ for the variable $x_n$. 
By the assumption of the induction, 
the simultaneous equations 
\begin{align*}
\begin{cases} 
p_1 (x_1,x_2, \dots, x_{n-1}) = u_1 - \b_n, 
\\
p_2 (x_1,x_2,\dots, x_{n-1}) =u_2 - \b_n^2, 
\\
\quad \vdots 
\\
p_{n-1}(x_1,x_2,\dots, x_{n-1}) = u_{n-1} - \b_n^{n-1} 
\end{cases}
\end{align*}
for variables $x_1, x_2,\dots, x_{n-1}$ has a solution. 
We denote it by $(x_1,x_2,\dots, x_{n-1})=(\b_1, \b_2, \dots, \b_{n-1})$. 
Then $(x_1,x_2,\dots, x_n)=(\b_1,\b_2, \dots, \b_n)$ gives a solution of \eqref{equations 3}. 
\end{proof}

\para 
We consider some  modifications of the formulas  \eqref{e sum e p} and \eqref{sum p e = p} as follows. 
Let $\bb =(b_1, \dots, b_n)$ be $n$ independent variables, 
and we consider the ring of polynomials 
$\ZZ[x_1,\dots, x_n][b_1,\dots, b_n]$. 
For $k \in \ZZ_{>0}$, put 
\begin{align*}
& e_k^{(\bb)} ( x_1, \dots,  x_n) 
\\
& = \sum_{1 \leq i_1 < i_2 < \dots < i_k \leq n} 
	(b_{i_1} + b_{i_2} +  \dots + b_{i_k}) x_{i_1} x_{i_2} \dots x_{i_k} 
	\in \ZZ[x_1,\dots, x_n][b_1,\dots, b_n]
\end{align*} 
and 
\begin{align*}
& p_k^{(\bb)}(x_1,\dots, x_n) = b_1 x_1^k + b_2 x_2^{k} + \dots + b_n x_n^{k} 
	\in \ZZ[x_1,\dots, x_n][b_1,\dots, b_n]. 
\end{align*}
We also put $e_0^{(\bb)}=1$. 
Note that $e_k^{(\bb)}(x_1,\dots,x_n)=0$ if $k >n$. 
Put $\mathbf{1}=(1,1,\dots,1)$, 
then we have 
$e_k^{(\mathbf{1})}(x_1,\dots, x_n) = k e_k (x_1,\dots, x_n)$ 
and 
$p_k^{(\mathbf{1})}(x_1,\dots, x_n) = p_k(x_1,\dots, x_n)$. 

We consider the generating functions $E(t)$, $E^{(\bb)}(t)$ and $P^{(\bb)}(t)$ by 
\begin{align*}
& E(t) = \sum_{k \geq 0} e_k(x_1,\dots, x_n) t^k 
	\in  \ZZ[x_1,\dots, x_n][b_1,\dots, b_n] [[t]],
\\
& E^{(\bb)} (t) = \sum_{k \geq 0} e_{k+1}^{(\bb)}(x_1,\dots, x_n) t^k 
	\in  \ZZ[x_1,\dots, x_n][b_1,\dots, b_n] [[t]], 
\\
& P^{(\bb)}(t) = \sum_{k \geq 0} (-1)^k p_{k+1}^{(\bb)} (x_1,\dots, x_n) t^k 
	\in  \ZZ[x_1,\dots, x_n][b_1,\dots, b_n] [[t]]. 
\end{align*}
Then, we have 
\begin{align*}
E(t) = \prod_{i=1}^n (1 + x_i t), 
\quad 
P^{(\bb)}(t) = \sum_{i=1}^n \frac{b_i x_i}{1 + x_i t} 
\end{align*}
and 
\begin{align*}
P^{(\bb)} (t) E(t) 
= 
\sum_{i=1}^n b_i x_i \Big( \prod_{j =1, j\not=i}^n (1 + x_i t) \Big)
= E^{(\bb)} (t). 
\end{align*}
This implies that, for $k \geq 0$, 
\begin{align}
e_{k+1}^{(\bb)} (x_1,\dots, x_n) 
= \sum_{z=0}^k (-1)^z p_{z+1}^{(\bb)} (x_1,\dots, x_n) e_{k-z} (x_1,\dots, x_n). 
\end{align} 
In the case where $k=n$, 
we have 
\begin{align*}
\sum_{z=0}^n (-1)^z p_{z+1}^{(\bb)}  (x_1, \dots, x_n) e_{n-z} (x_1,\dots, x_n) 
= 0
\end{align*}
since $e_{n+1}^{(\bb)}(x_1, \dots, x_n)=0$. 
This implies that 
\begin{align}
\label{sum p b e = pb}
\sum_{z=0}^{n-1} (-1)^{n-z +1} p_{z+1}^{(\bb)} (x_1,\dots, x_n) e_{n-z} (x_1,\dots, x_n) 
=  p_{n+1}^{(\bb)} (x_1,\dots, x_n). 
\end{align}



\end{document}